\numberwithin{equation}{section}
\newtheorem{theorem}{Theorem}[section]
\newtheorem{lemma}[theorem]{Lemma} 
\newtheorem{proposition}[theorem]{Proposition}
\newtheorem{definition}[theorem]{Definition}
\newtheorem{remark}[theorem]{Remark}
\numberwithin{equation}{section}
\begin{document}

\title[On asymptotic behavior for a class of diffusion]{On asymptotic behavior for a class of diffusion equations involving the fractional $\wp(\cdot)-$Laplacian as $\wp(\cdot)$ goes to $\infty$ }

\author{Lauren Maria Mezzomo Bonaldo and Elard Juarez Hurtado}
\thanks{Department of Mathematics Universidade Federal do Espírito Santo,  Espírito Santo ES, Brazil.
E-mail: lauren.bonaldo@ufes.br}\, 
\thanks{Department of Mathematics, Universidade  de Brasília,  Brasília DF, Brazil.
E-mail: elardjh2@gmail.com Supported
by CAPES}
\subjclass[2010]{35B40 \and 35D05 \and  35R11 \and 35K57 \and 49J45.}
\keywords{Asymptotic behavior \and Variable exponent \and  Fractional $\wp(\cdot)$-Laplace operator\and Nonlocal diffusion\and Mosco convergence.}

\date{}
\maketitle
 \begin{abstract}

In this manuscript, we will study the asymptotic behavior for a class of nonlocal diffusion equations associated with the weighted fractional $\wp(\cdot)-$Laplacian operator involving constant/variable exponent. In the case of constant exponents, under some appropriate conditions, we will  study the existence of solutions and asymptotic behavior of solutions by employing the subdifferential approach and we will study the problem when $\wp$ goes to $\infty$.    Already, for case the weighted fractional $\wp(\cdot)$-Laplacian operator, we will also study the asymptotic behavior of the problem solution when $\wp(\cdot)$ goes to $\infty$, in the whole or in a subset of the domain (the problem involving the fractional $\wp(\cdot)$-Laplacian presents a discontinuous exponent).  
To obtain the results of the asymptotic behavior in both problems it will be via Mosco convergence.  
 \end{abstract}

\section{Introduction}\label{intro}\quad
\noindent
 We investigate the existence of solutions and analyze the asymptotic behavior 
of solutions of a class of nonlocal diffusion equations when  $\wp(\cdot)$ goes to $\infty$ associated with the  weighted fractional  $\wp(\cdot)$-Laplacian operator  defined by
\begin{equation}\label{operator}
(-\Delta)_{\wp(\cdot), \mathtt{A}}^{s}=P.V.\displaystyle{\int_{\Omega}\mathtt{A}(x,y,t)\frac{|u(x)-u(y)|^{\wp(x,y)-2}(u(x)-u(y))\,dy}{|x-y|^{N+s\wp(x,y)}},\,\,x\in \Omega},
\end{equation}

\noindent where $\Omega\subset \mathbb{R}^{N}$ be a bounded domain with smooth boundary $\partial \Omega,$ \text{P.V.} is the Cauchy principal value, $s\in (0,1),$  $\mathtt{A}:\Omega\times \Omega\times (0,T)\to \mathbb{R}$ is a weighted function and $\wp:\overline{\Omega}\times \overline{\Omega}\to (1,\infty)$ both functions verifying adequate conditions. In the case of the weighted function $\mathtt{A}(\cdot,\cdot,t)\equiv \mathtt{A}(\cdot,\cdot)$ and when $\mathtt{A}(\cdot,\cdot,\cdot)\equiv 1$ have been treated in the papers \cite{palatucci,pucci,scott} and the references therein.
In the case when $\mathtt{A}(\cdot,\cdot,\cdot)\equiv 1$, the weighted fractional $\wp(\cdot)$-Laplacian operator is the fractional $\wp(\cdot)-$Laplacian operator, that is, the operator $(-\Delta)_{\wp(\cdot)}^{s},$ which is a fractional version of the $\wp(\cdot)$-Laplacian operator, given by $\operatorname{div}(|D u|^{\wp(x)-2}D u),$  these operators play a fundamental role in various applications: biology, data science,  geophysics, tomographic reconstruction, novel exterior optimal control, population dynamics, we refer e.g.  \cite{antil,antil2,antil4,bisch,xavier1,xavier2,caffarelli2,valdinoci,galiano,jin,tai,antil3} and references therein.




We analyze this paper in two situations distinct: 

First,  we will study for a class of nonlocal diffusion equations associated with the weighted fractional $\wp-$Laplacian operator, we will specifically study the existence of solutions and asymptotic behavior of the solutions of a class of nonlocal diffusion equation associated with the weighted fractional $\wp$-Laplacian when $\wp$ goes to $\infty$,  precisely,  we will study the following problem 
\begin{equation}\tag{$\mathscr{C}_{\varphi^{t}_{\wp},f,u_{0}}$}\label{2.13} 
\left\{
\begin{array}{rcl}
&&\partial_t u + (-\Delta)^{s}_{\wp, \mathtt{A}} u  = f(x,t) \;\; \mbox{in} \;\; \Omega \times (0,T),\\
&& u  = 0 \;\; \mbox{on} \;\; \partial\Omega \times (0,T),\,\,u (\cdot, 0)  = u_{0} \;\; \mbox{in} \;\; \Omega
\end{array} 
\right.
\end{equation}
and under appropriate conditions, we will study the well-posedness of solutions for the following periodic problem    
\begin{equation}\tag{$\mathcal{P}_{\varphi^{t}_{\wp},f}$}\label{2.18} 
\left\{
\begin{array}{rcl}
&&\partial_t u + (-\Delta)^{s}_{\wp,\mathtt{A}} u  = f(x,t) \;\; \mbox{in} \;\; \Omega \times (0,T),\\
&& u  = 0 \;\; \mbox{on} \;\; \partial\Omega \times (0,T),\,\,u(x, 0)  = u_{0}(T) \;\; \mbox{in} \;\; \Omega,
\end{array} 
\right.
\end{equation}
  where $\partial_{t}u=\partial u/\partial t$ denote the partial derivative of $u,$ $f$ and $u_{0}$ are functions given satisfying certain conditions.

  

In the second situation, we will consider the weighted $\mathtt{A}(\cdot,\cdot,\cdot)\equiv 1$ and we will analyze the asymptotic behavior as $\wp(\cdot)$ goes to $\infty$ for the problem
\begin{equation}\tag{$\mathcal{P}_{\wp}$}\label{IP}
\left\{
\begin{array}{rcl}
&&\partial_t u + (-\Delta)^{s}_{\wp(\cdot)} u  = f(x,t) \;\; \mbox{in} \;\; \Omega \times (0,T),\\
&&u  = 0 \;\; \mbox{on} \;\; \partial\Omega \times (0,T),\,\,u (\cdot, 0)  = u_{0} \;\; \mbox{in} \;\; \Omega,
\end{array} 
\right.
\end{equation}
where $f:\Omega\times (0,\infty)\to \mathbb{R}$ and $u_{0}:\Omega\to \mathbb{R}$ functions are given satisfying certain conditions. 

More precisely, we study the asymptotic behavior of problem \eqref{IP}, by substituting $\wp(\cdot,\cdot)$ instead of $\wp_{\jmath}(\cdot,\cdot)$, that is,  we shall investigate the limiting behavior as $\wp_{\jmath}(\cdot,\cdot) \rightarrow \infty$ of the solutions $u_{\wp_{\jmath}(\cdot)}=u_{\wp_{\jmath}(x,y)}(x,t)$ for  the following problem
\begin{equation}\label{aproximado}
\left\{
\begin{array}{rcl}
&&\partial_t u_{\wp_{\jmath}(\cdot)} + (-\Delta)^{s}_{\wp_{\jmath}(\cdot)} u_{\wp_{\jmath}(\cdot)}  = f_{\wp_{\jmath}(\cdot)}(x,t)  \;\; \mbox{in} \;\; \Omega \times (0,T),\\
&&u_{\wp_{\jmath}(\cdot)}  = 0 \;\; \mbox{on} \;\; \partial\Omega \times (0,T),\,\,u_{\wp_{\jmath}(\cdot)} (\cdot, 0)  = u_{\wp_{\jmath}(\cdot)}^{0} \;\; \mbox{in} \;\; \Omega,
\end{array} 
\right.
\end{equation}
 where, for all $\jmath\in \mathbb{N}$,  the sequence $\wp_{\jmath}:\overline{\Omega}\times \overline{\Omega}\to (1,\infty)$ is such that $\wp_{\jmath}\in \mathscr{C}(\overline{\Omega}\times \overline{\Omega})$ for  all $(x,y)\in\overline{\Omega}\times \overline{\Omega}$ and   $f_{\wp_{\jmath}(\cdot)}$ and $u_{\wp_{\jmath}(\cdot)}^{0}$  are functions given satisfying certain conditions.
Fixed $\jmath,$ in the problem \eqref{IP}, we have the existence of a unique solution $u_{\wp_{\jmath}(\cdot)}$ of the corresponding   problem  \eqref{aproximado} (see for instance \cite{elardjh}).


Regarding the first situation, we will denote the limiting solution by $u$ and show that this solution verifies a variational characterization in the set

\begin{equation*}
\mathtt{K}^{t} := \left\lbrace u \in W^{s,2}_{0}(\Omega): \sup_{ (x,y) \in \Omega \times \Omega,\,\, x\neq y  } \dfrac{|v(x) - v(y)|}{|x-y|^{s}} \leqslant \frac{1}{\mathtt{A}(x,y,t)} \right\rbrace
\end{equation*}
where the weighted $\mathtt{A}$ verify the hypothesis $(W_{1}).$ The set $\mathtt{K}^{t}$ will play a very important role in studying the asymptotic behavior for this problem.

In the second situation,   we will study the behavior 
of  sequence of solutions $u_{\wp_{j}(\cdot)}$ of \eqref{aproximado} when we consider that the sequence of functions $\wp_{j}(\cdot,\cdot)$ goes to $\infty$ in the domain $\Omega\times \Omega$. In this situation, we denote this limit solution by $\mathfrak{u}_{\infty}$ and we will prove that this solution satisfies a variational characterization in the set 
\begin{equation*}
\mathbb{K}_{\infty} = \left\lbrace v\in W^{s,2}_{0}(\Omega) : \sup_{ (x,y) \in \Omega \times \Omega,\,\, x\neq y  } \dfrac{|v(x) - v(y)|}{|x-y|^{s}} \leqslant 1 \right\rbrace,
\end{equation*}

\noindent the set $\mathbb{K}_{\infty}$ plays a crucial role throughout the entire work. 
After, we study the problem \eqref{aproximado} when $ \wp_{\jmath}(\cdot,\cdot)\to \infty $ in a subdomain of $\Omega\times \Omega$. More specifically, we consider the following condition

\begin{equation*}\label{sb1}
\wp_{\jmath}(x,y) =
\begin{cases}
\kappa_{\jmath}(x,y) \to \infty \mbox{ if } (x,y) \in \mathcal{O}\times \mathcal{O}, \\
\kappa(x,y) < \infty \mbox{ if }(x,y) \in (\Omega \times \Omega) \setminus \overline{\mathcal{O}\times \mathcal{O}} 
\end{cases}
\end{equation*}

\noindent as $\jmath \to \infty,$ where $\mathcal{O}$ is a nonempty open subset of $\Omega$ and continuous functions  $ \kappa (\cdot,\cdot),$ $\kappa_{\jmath}(\cdot,\cdot) $ with values in $(1,\infty)$. In this situation, the limit problem is described as a mixture of two problems, a nonlinear diffusion equation involving the $\wp(\cdot)$-Laplacian in $(\Omega \times \Omega) \setminus \overline{\mathcal{O}\times \mathcal{O}}$ and a quasivariational inequality of evolution over $\mathcal{O}\times \mathcal{O}$. In contrast to the preceding case, the constraint set of quasivariational inequality depends on the unknown function. As far as we know, these results are new and the techniques developed are not standard at all.



 The problems that we study in this manuscript, namely the problems \eqref{2.13} - \eqref{IP}, are generalized, improve, and complement some results obtained in some recent work \cite{akagip,akagi,elardjh}, in the sense that we study problems involving a fractional operator with variable exponents, we also study qualitative properties for this class of problems. More precisely, when $s\geqslant 1$ and $\wp(\cdot)\equiv(\mbox{constant/variable})$ stationary and evolution problems were studied by \cite{akagip,akagi,mazon22,mazon33,attouch2,manfredi1,lindgren,palatucci,ken1}.


The motivation of the present investigation contains several aspects. The first is that in general nonlocal diffusion problems are important for modeling evolutionary problems, for example, problems in physics, biology, engineering, as mentioned in the beginning. The second interesting aspect of this work is regarding the non-standard growth of the variable exponents. In the literature, most of the known results refer to the stationary case with the growth condition $\wp(x)$ (see, for instance \cite{elardlaurenarx,lauren} and the references therein). Also, we want to refer to \cite{galiano,gilboa,jin,tai,antil3} for instance, for some numerical aspects related to
the numerical approximation of problems related to the parabolic fractional $p-$Laplacian.

Problems involving the fractional weighted $\wp(\cdot)-$Laplacian with variable exponents and weights have been very little research for evolution equations so far, it should also be noted that the asymptotic behavior of this type of problem has not been studied when $\wp(\cdot)$ goes to infinity. The appearance of variable exponents in the structure of this problem produce many difficulties and challenges when applying the techniques and strategies of key tools necessary, important for establishing our results, this structure also generates serious difficulties in obtaining estimates in order to take the limit as $ \wp(\cdot,\cdot)\to \infty $ once the canonical techniques cannot be used to obtain estimations as in the case of local problems. However, as mentioned at the beginning, partial differential equations involving functional spaces with variable exponents have increasingly grown interested and motivated in recent years.

To explain the motivation of our work for example we refer to the article \cite{warma}, where the authors, for $\wp(\cdot,\cdot)\equiv \wp (\mbox{constant}),$ they study the following problem

\begin{equation*} 
\left\{
\begin{array}{rcl}
&&\partial_t u(x,t) + (-\Delta)^{s}_{\wp} u(x,t)-|u(x,t)|^{q-2}u(x,t)  = f(x,t) \;\; \mbox{in} \;\; \Omega \times (0,T),\\
&& u(x,t) = 0 \;\; \mbox{on} \;\; (\mathbb{R}^{N}\setminus \Omega) \times (0,T),\\
&& u(x, 0)  = u_{0}(x) \;\; \mbox{in} \;\; \Omega,
\end{array} 
\right.
\end{equation*}
they show the existence of locally defined solutions to the problem with any initial condition $u_{0}\in L^{r}(\Omega)$ where $r\geqslant 2$ verifies $r>N(q-p)/sp,$ also show that finite time explosion is possible for these problems.


In \cite{giacomoni}, the authors studied the problem

\begin{equation*} 
\left\{
\begin{array}{rcl}
&&\partial_t u(x,t) + (-\Delta)^{s}_{\wp} u(x,t)+g(x,u)  = f(x,u) \;\; \mbox{in} \;\; \Omega \times (0,T),\\
&& u(x,t) = 0 \;\; \mbox{on} \;\; (\mathbb{R}^{N}\setminus \Omega) \times (0,T),\\
&& u(x, 0)  = u_{0}(x) \;\; \mbox{in} \;\; \Omega,
\end{array} 
\right.
\end{equation*}
they investigate the asymptotic behavior of global weak solutions, specifically, they show that suitable conditions on $f$ and $g$, that global solutions converge to the only stationary solution when $t\to \infty.$

 In \cite{mazon11}, Maz\'{o}n et al., establish existence, some properties as extinction regularity of the weak solution and the entropy solution with general data to the problem \eqref{IP} for the constant exponent $\wp$ in different situations. Besides, when $\wp(\cdot)\equiv \wp (\mbox{constant}),$ V\'{a}zquez \cite{vazquez} studied the existence, uniqueness and properties of strong nonnegative solutions of the following equation

$$\partial_{t}u(x,t)+\int_{\mathbb{R}^{N}}\frac{\mathfrak{a}(u(x,t)-u(y,t))}{|x-y|^{ N+s\wp}}\,dy=0,$$
where $\mathfrak{a}(z)=|z|^{\wp-2}z,$ for $1<\wp<\infty,$ and $0<s<1.$

 Moreover, in the constant exponent case $\wp(\cdot,\cdot)\equiv \wp,$ the problem of the diffusion limit is connected with the problem
  optimal transport through a nonlocal version of a sandpile growth model (see e.g. \cite{mazon22,mazon33,mazon11}). Furthermore, these problems appear in many applications, as for example in continuum mechanics, phase transition phenomena, image process, game theory, and L\'evy processes (see e.g. \cite{cafarelli,galiano,gilboa,laskin}).
 
  It should also be emphasized that when $\wp(\cdot,\cdot) \equiv \wp,$ the limit problem is related to the infinity fractional Laplacian:
$$(\mathfrak{L}_{\infty}^{s}u)(x)=(\mathfrak{L}_{\infty}^{s}u)_{+}(x)+(\mathfrak{L}_{\infty}^{s}u)_{-}(x),$$
where
\begin{equation*}
(\mathfrak{L}_{\infty}^{s}u)_{+}(x)=\sup_{ y\in \overline{\Omega},\,\,y\neq x}\frac{u(y)-u(x)}{|y-x|^{s}} \,\,\, ,(\mathfrak{L}_{\infty}^{s}u)_{-}(x)=\inf_{ y\in \overline{\Omega},\,\,y\neq x}\frac{u(y)-u(x)}{|y-x|^{s}}\mbox{ for } x\in \Omega,
\end{equation*}
for more details see e.g. \cite{lindgren,mayte}. The infinity Laplacian/infinity fractional Laplacian equation is currently applied in various fields of mathematics, for instance, in computer vision, surface reconstruction, optimal transport, see e.g. \cite{aronson1,aronson2,manfredi1,crandal,crandal2,kawohl,manfredi2,manfredirossi,rossi1,rossi2,linds,vazquez}.

Recently in \cite{elardjh}, Hurtado studied well-posedness of the problem \eqref{IP} governed by subdifferential operators in the framework of semigroups generated by maximal monotone operators, the asymptotic behavior of \eqref{IP} are also studied when $t\to \infty$ and the existence of global attractors.

Bearing in mind the aforementioned investigations, a natural question is what happens with the fractional $\wp(\cdot)-$Laplacian when $\wp(\cdot)$ goes to $\infty$. Thus, in this paper, we focus our attention on the study of solutions of the nonlocal diffusion problems when $\wp_{\jmath}(\cdot,\cdot)\to \infty,$ taking of inspiration the works \cite{akagi,mazon22,mazon33,elardjh,mazon11}. In order to prove the main results, the notion of Mosco convergence will be used which provides an appropriate environment to study asymptotic behavior for a wide class of variational problems in mathematical analysis, see for instance \cite{akagip,attouch,mayte2,palatucci,warma,manfredirossi,mosco,pucci}.




The main contributions of this paper are the following:
\begin{itemize}
\item[$\bullet$] As far as we know,  this paper is the first in the literature to work with a fractional weighted $ \wp(\cdot)-$Laplacian and study the existence of a solution and asymptotic behavior for a parabolic problem when $ \wp(\cdot)$ goes to infinity.
\item[$\bullet$] Little is currently known about parabolic problems associated with the $\wp(\cdot)-$ Laplacian fractional operator, this work is the first attempt to analyze a nonlocal problem where the variable exponent is discontinuous somewhere in the domain, we assume that $\wp(\cdot,\cdot)$ is infinity in a subdomain and finite in the complement.
\end{itemize}

\section{Notation, assumptions and statement of the main result}

  Throughout the text we use the notation, if $0<s<1,$ $\Omega\times \Omega=\Omega^{2},$ $\wp:\overline{\Omega}^{2}\to  (1,\infty)$ and $\wp'(\cdot,\cdot):=\displaystyle\frac{\wp(\cdot,\cdot)}{\wp(\cdot,\cdot)-1},$ then the space $\mathcal{W}^{-s,\wp'(\cdot,\cdot)}(\Omega)$ is defined as
usual to be the dual of the reflexive Banach space $\mathcal{W}_{0}^{s,\wp(\cdot,\cdot)}(\Omega):=\mathbb{X},$ that is $\mathcal{W}^{-s,\wp'(\cdot,\cdot)}(\Omega):=(\mathcal{W}_{0}^{s,\wp(\cdot,\cdot)}(\Omega))'=\mathbb{X}'.$ For $u\in \mathcal{W}^{s,\wp(\cdot,\cdot)}(\Omega)$ we denote by $\mathcal{G}_{(s,\wp(\cdot,\cdot))}u(x,y)$ the function defined in $\Omega^{2}$ by
\begin{equation*}\label{grad}
\mathcal{G}_{(s,\wp(\cdot,\cdot))}u(x,y):=\frac{u(x)-u(y)}{|x-y|^{\frac{N}{\wp(\cdot,\cdot)}+s}}.
\end{equation*}
We will also denote by $\mathcal{G}_{s}u(x,y):=\frac{u(x)-u(y)}{|x-y|^{s}}$ the H\"{o}lder quotient of order $s$ and the measure on $\Omega^{2}$ given by $d\mu:=\frac{dx\,dy}{|x-y|^{N}}$ and we denote $u_{\wp_{\jmath(\cdot,\cdot)}}:=u_{\wp_{\jmath(\cdot)}}$.

For $s\in (0,1)$ and $\wp=\infty,$ we let
\begin{equation*}
W^{s,\infty}(\Omega):=\bigg\{u\in L^{\infty}(\Omega):\sup_{x,y\in \Omega,\,\,x\neq y}\frac{|u(x)-u(y)|}{|x-y|^{s}}<\infty   \bigg\}.
\end{equation*}
It follows that for $\displaystyle{wp^{-}:=\min_{(x,y) \in \overline{\Omega^{2}}} \wp(x,y)\geqslant \max\left\{\displaystyle\frac{2N}{N+2s},1\right\}}$ and $s\in (0,1),$ we have 
$$\mathcal{W}_{0}^{s,\wp(\cdot,\cdot)}(\Omega)\hookrightarrow L^{2}(\Omega)\cong (L^{2}(\Omega))' \hookrightarrow (\mathcal{W}_{0}^{s,\wp(\cdot,\cdot)}(\Omega))' $$
with continuous and dense embeddings, called Gelfand triple.

The definition of solution of the equation 
\begin{equation}\label{pb}
\partial_t u + (-\Delta)^{s}_{\wp, \mathtt{A}} u  = f(x,t)
\end{equation}
 is as follows.
\begin{definition}\label{31}
 A function $u\in \mathscr{C}([0,T];L^{2}(\Omega))$ is said to be strong solution of \eqref{pb} if the following conditions are satisfied:
 \begin{itemize}
 \item[(i)] $u(\cdot,t)$ is an $L^{2}(\Omega)-$valued absolutely continuous function on $[0,T];$
 \item[(ii)] $u(\cdot,t)\in W_{0}^{s,\wp}(\Omega)$ for a.e. $t\in (0,T)$ and 
 \begin{equation*}
\begin{split} 
 \int_{\Omega}\partial_{t}u(x,t)\,dx+\int_{\Omega^{2}}\mathtt{A}(x,y,t)\frac{|u(x)-u(y)|^{\wp-2}(u(x)-u(y))(v(x)-v(y))}{|x-y|^{N+s\wp}}\,dx \,dy\\
 =\int_{\Omega}f(x,t)v(x)\,dx
 \end{split}
 \end{equation*}
for all $v\in W_{0}^{s,\wp}(\Omega)$ and a.e. $t\in (0,T).$ 
 \end{itemize}
\end{definition}

Furthermore, we say that $u\in \mathscr{C}([0,T];L^{2}(\Omega))$ is a weak solution 
of \eqref{pb} if there exist sequence $(f_{\wp_{j}})_{j \in \mathbb{N}}\subset L^{1}(0,T;L^{2}(\Omega))$ and $(u_{j})_{j \in \mathbb{N}}\subset \mathscr{C}([0,T];L^{2}(\Omega))$ such that $u_{j}$ is a strong solution of \eqref{pb}, $f_{\wp_{j}}\to f_{j}$ strongly in $L^{1}(0,T;L^{2}(\Omega))$ and $u_{j}\to u$ in $\mathscr{C}([0,T];L^{2}(\Omega))$ as $j\to \infty.$



The definition of solution of the problem \eqref{IP} is as follows:

\begin{definition}
A function $u\in \mathscr{C}(\mathbb{R}_{0}^{+}; L^{2}(\Omega))$ is said to be a solution of \eqref{IP}, if the following conditions are all satisfied:
\begin{itemize}
\item[$\textbf{(1)}$] $u\in W_{loc}^{1,2}(\mathbb{R}_{0}^{+};L^{2}(\Omega))\cap \mathscr{C}_{w}(\mathbb{R}_{0}^{+};\mathbb{X}),$
\item[$\textbf{(2)}$] $u(\cdot,0)=u_{0}$ a.a. in $\Omega,$
\item[$\textbf{(3)}$] For all $\varphi\in \mathbb{X},$ it holds that
\begin{equation*}
\begin{split}
&\int_{\Omega}\partial_{t}u(x,t)\varphi\,dx+\int_{\Omega^{2}}|u(x,t)-u(y,t)|^{\wp(x,y)-2} \frac{(u(x,t)-u(y,t))(\varphi(x)-\varphi(y))}{|x-y|^{N+s\wp(x,y)}}\,dx\,dy\\
&=\int_{\Omega}f(x,t)\varphi\,dx \mbox{ for a.a. } t>0.
\end{split}
\end{equation*}

\end{itemize}
\end{definition}

\subsection{Assumptions on weighted function $\mathtt{A}$  }
We introduce the following assumptions for the weight function $\mathtt{A}$:

\noindent \textbf{Assumption} $(W_1).$ We assume that 
\begin{equation*}\label{3.9}
\begin{split}
& \mathtt{A}(x,y,t) = \mathfrak{a}(x,y)\sigma(t), \quad \mathfrak{a} \in L^{\infty}(\Omega^{2}), \; \sigma \in W^{1,2}(0,T), \\
& 0< \mathtt{A}(x,y,t) \leqslant a_{0}  \quad \textrm{for a.e.} \; (x,y) \in \Omega^{2} \; \textrm{and all} \; t \in [0,T].
\end{split}
\end{equation*}


\begin{theorem}\label{teo 3.4}
Assume that $(W_{1})$ is satisfied and $\wp \in (1,+\infty)$. Then for all $f \in L^1(0,T;L^2(\Omega))$ and $u_0 \in L^2(\Omega)$, $(\mathscr{C}_{\varphi_{\wp}^{t},f,u_{0}})$ has a unique weak solution $u_{\wp}$. In particular, if $f \in L^2(0,T;L^2(\Omega))$ and $u_0 \in W^{1,\wp}_{0}(\Omega)$, the weak solution up becomes a strong solution of $(\mathscr{C}_{\varphi_{\wp}^{t},f,u_{0}}).$
\end{theorem}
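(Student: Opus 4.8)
The plan is to recast $(\mathscr{C}_{\varphi_{\wp}^{t},f,u_{0}})$ as an abstract Cauchy problem of the form $u'(t) + \partial\varphi^{t}_{\wp}(u(t)) \ni f(t)$ in $H := L^{2}(\Omega)$, governed by a time-dependent family of convex functionals, and then invoke the well-posedness theory for evolution equations driven by subdifferentials of time-dependent convex functions (of the type developed by Kenmochi, Yamada, and used e.g. in \cite{akagip,elardjh}). Concretely, for each $t\in[0,T]$ define
\begin{equation*}
\varphi^{t}_{\wp}(u) := \frac{1}{\wp}\int_{\Omega^{2}} \mathtt{A}(x,y,t)\,\frac{|u(x)-u(y)|^{\wp}}{|x-y|^{N+s\wp}}\,dx\,dy
\end{equation*}
if $u\in W^{s,\wp}_{0}(\Omega)$, and $+\infty$ otherwise on $H$. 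First I would check that $\varphi^{t}_{\wp}$ is proper, convex and lower semicontinuous on $H$ for every fixed $t$ (convexity from convexity of $r\mapsto |r|^{\wp}$ and positivity of $\mathtt{A}$; lower semicontinuity via Fatou together with the compact embedding $\mathcal{W}^{s,\wp}_{0}(\Omega)\hookrightarrow L^{2}(\Omega)$ coming from the Gelfand triple stated above, using $\wp^{-}\geqslant \max\{2N/(N+2s),1\}$). Then I would verify that its subdifferential $\partial\varphi^{t}_{\wp}$ in $H$ is exactly the realization of $(-\Delta)^{s}_{\wp,\mathtt{A}}$ with homogeneous Dirichlet condition, so that strong solutions in the sense of Definition~\ref{31} coincide with strong solutions of the abstract equation.

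The core of the argument is the time-dependence estimate. Using Assumption $(W_{1})$, namely $\mathtt{A}(x,y,t) = \mathfrak{a}(x,y)\sigma(t)$ with $\sigma\in W^{1,2}(0,T)$, $\sigma>0$, one gets for $s\leqslant t$ the comparison $\varphi^{t}_{\wp}(u) = \frac{\sigma(t)}{\sigma(s)}\varphi^{s}_{\wp}(u)$, hence
\begin{equation*}
|\varphi^{t}_{\wp}(u) - \varphi^{s}_{\wp}(u)| \leqslant \Bigl|\frac{\sigma(t)}{\sigma(s)} - 1\Bigr|\,\varphi^{s}_{\wp}(u) \leqslant \Bigl(\int_{s}^{t}\frac{|\sigma'(\tau)|}{\sigma(\tau)}\,d\tau\Bigr)\varphi^{s}_{\wp}(u),
\end{equation*}
which is precisely the Kenmochi-type admissibility hypothesis: the map $t\mapsto\varphi^{t}_{\wp}$ varies in a way controlled by an absolutely continuous function of bounded variation times the functional itself (one needs $\sigma$ bounded away from $0$, which follows from $\sigma\in W^{1,2}(0,T)\hookrightarrow C([0,T])$ and $\sigma>0$, possibly shrinking to compact subintervals and then patching). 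Granting this, the abstract theory yields: for $u_{0}\in \overline{D(\varphi^{0}_{\wp})}^{H} = L^{2}(\Omega)$ and $f\in L^{1}(0,T;L^{2}(\Omega))$ there is a unique weak solution $u_{\wp}\in C([0,T];L^{2}(\Omega))$; and if moreover $f\in L^{2}(0,T;L^{2}(\Omega))$ and $u_{0}\in D(\varphi^{0}_{\wp})$, i.e. $u_{0}\in W^{s,\wp}_{0}(\Omega)$ (the statement writes $W^{1,\wp}_{0}$ but the intended space is $W^{s,\wp}_{0}(\Omega)$ with finite Gagliardo energy), the solution is strong, with $u_{\wp}\in W^{1,2}(0,T;L^{2}(\Omega))$, $t\mapsto\varphi^{t}_{\wp}(u_{\wp}(t))$ absolutely continuous, and the equation holding for a.e. $t$.

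For uniqueness at the level of weak solutions I would argue directly: given two strong solutions $u_{1},u_{2}$ with the same data, test the difference of the two equations with $u_{1}-u_{2}$ and use monotonicity of $\partial\varphi^{t}_{\wp}$ (equivalently, the elementary inequality $(|a|^{\wp-2}a - |b|^{\wp-2}b)(a-b)\geqslant 0$ applied pointwise in $(x,y)$ with weight $\mathtt{A}\geqslant 0$) to get $\frac{1}{2}\frac{d}{dt}\|u_{1}-u_{2}\|_{L^{2}}^{2}\leqslant 0$, whence $u_{1}\equiv u_{2}$; the contraction then passes to weak solutions by the density/approximation built into their definition. The existence part at the strong level is obtained by approximating $f$ by $L^{2}$-data and $u_{0}$ by elements of $D(\varphi^{0}_{\wp})$, deriving the a priori bounds (test with $u_{\wp}$ for the $L^{\infty}(0,T;L^{2})\cap$ energy bound, and with $\partial_{t}u_{\wp}$ together with the chain rule for $t\mapsto\varphi^{t}_{\wp}(u_{\wp}(t))$ for the $W^{1,2}(0,T;L^{2})$ bound, where the time-derivative of the functional contributes the controlled term $\tfrac{\sigma'(t)}{\sigma(t)}\varphi^{t}_{\wp}(u_{\wp}(t))$), and passing to the limit using lower semicontinuity and demiclosedness of the maximal monotone graph. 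The main obstacle I anticipate is the careful justification of the chain rule $\frac{d}{dt}\varphi^{t}_{\wp}(u_{\wp}(t)) = (\partial\varphi^{t}_{\wp}(u_{\wp}(t)), u_{\wp}'(t))_{H} + \partial_{t}\varphi^{t}_{\wp}(u_{\wp}(t))$ together with the nonautonomous energy estimate — this is exactly where $(W_{1})$ (the factorized structure and $\sigma\in W^{1,2}$) is indispensable, and where one must be slightly delicate because $\sigma'/\sigma$ need only be in $L^{2}$, so Gronwall must be applied with an $L^{1}$ (indeed $L^{2}\subset L^{1}$ on $(0,T)$) coefficient.
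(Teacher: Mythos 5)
Your proposal takes the same high-level route as the paper: realize $\varphi^{t}_{\wp}$ as a time-dependent proper convex l.s.c.\ functional on $\mathcal{H}=L^{2}(\Omega)$ whose subdifferential is the Dirichlet realization of $(-\Delta)^{s}_{\wp,\mathtt{A}}$, check that the family $(\varphi^{t}_{\wp})_{t\in[0,T]}$ satisfies the Kenmochi admissibility condition $\Gamma(\alpha,\beta)$, and then invoke the abstract well-posedness result (the paper's Theorem~\ref{teo 2.6}). The result then reads off immediately, since $\overline{D(\varphi^{0}_{\wp})}^{\mathcal{H}}=\overline{W^{s,\wp}_{0}(\Omega)}^{L^{2}}=L^{2}(\Omega)$ for the weak-solution part and $D(\varphi^{0}_{\wp})=W^{s,\wp}_{0}(\Omega)$ for the strong-solution part; you are right that the paper's ``$u_{0}\in W^{1,\wp}_{0}(\Omega)$'' should read $W^{s,\wp}_{0}(\Omega)$.

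The one genuine difference is the choice of admissibility certificate. You verify $\Gamma(\alpha,\beta)$ with the constant comparison curve $u(t)\equiv u_{0}$, giving $\alpha\equiv 0$ and a nontrivial $\beta(\varrho,t)\sim\int_{0}^{t}|\sigma'(\tau)|/\sigma(\tau)\,d\tau\in W^{1,1}(0,T)$; the key estimate is $\varphi^{t}_{\wp}(u_{0})=\tfrac{\sigma(t)}{\sigma(t_{0})}\varphi^{t_{0}}_{\wp}(u_{0})$, which directly bounds $|\varphi^{t}_{\wp}(u_{0})-\varphi^{t_{0}}_{\wp}(u_{0})|$ by a controlled multiple of $\varphi^{t_{0}}_{\wp}(u_{0})+1$. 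The paper does the dual thing: it constructs the rescaled comparison curve $u(t)=\tfrac{\sigma(t)}{\sigma(t_{0})}u_{0}$, which keeps $\varphi^{t}_{\wp}(u(t))=\varphi^{t_{0}}_{\wp}(u_{0})$ exactly constant, so $\beta\equiv 0$ and all the variation is moved into $\alpha_{1}(\varrho,t)\propto\varrho\,\sigma(t)\in W^{1,2}(0,T)$. Both certificates are legitimate instances of the same $\Gamma(\alpha,\beta)$ structure; your version trades the construction of a comparison curve for a lower bound $\inf_{[0,T]}\sigma>0$, which, as you observe, follows from $\sigma\in W^{1,2}(0,T)\hookrightarrow C([0,T])$ together with positivity of $\mathtt{A}$ (note the paper's explicit constant $\tfrac{1}{a_{0}}|\mathfrak{a}|_{L^{\infty}}$ at this point is not actually a correct upper bound for $1/\sigma(t_{0})$ given only $0<\mathtt{A}\leqslant a_{0}$, so the lower bound on $\sigma$ is in fact needed in both arguments). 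Your sketch of the internal mechanism of the abstract theorem (approximation, energy estimate with the chain rule contributing $\tfrac{\sigma'}{\sigma}\varphi^{t}_{\wp}$, Gronwall with $L^{1}$ coefficient, demiclosedness) is a faithful account of what Theorem~\ref{teo 2.6} does under the hood, and the direct contraction argument for uniqueness is correct; the paper simply outsources both to the cited theorem rather than re-deriving them.
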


The second result of this paper  analyzes the asymptotic behavior of $u_{\wp}$ as $\wp \to +\infty$ is given in the following theorem.

\begin{theorem}\label{teo 3.5}
Assume $(W_{1})$ satisfies and define

\begin{equation*}\label{3.17}
\mathtt{K}^{t} := \left\lbrace u \in W^{s,2}_{0}(\Omega): \left|\mathcal{G}_{s}u(x,y) \right| \leqslant \frac{1}{\mathtt{A}(x,y,t)} \; \textrm{for a.e} \; (x,y) \in \Omega^{2} \right\rbrace.
\end{equation*}
Let $(\wp_j)_{j\in \mathbb{N}}$ be a sequence in $(1,+\infty)$ such that $\wp_j \to +\infty$ as $j \to +\infty$. Besides, let $(f_j)_{j\in \mathbb{N}}\subset L^2(0,T;L^2(\Omega))$, $f \in L^2(0,T;L^2(\Omega))$, $(u_{0,j})_{j\in \mathbb{N}}\subset L^2(\Omega)$ and $u_0 \in  \mathtt{K}^0$ such that
\begin{equation*}\label{3.18}
f_j \to f \quad \textrm{ in } \; L^2(0,T;L^2(\Omega)),
\end{equation*}
\begin{equation*}\label{3.19}
u_{0,j} \to u_0 \quad \textrm{ in } \; L^2(\Omega).
\end{equation*}
Then the unique weak solution $u_{j}$ of $(\mathscr{C}_{\varphi_{\wp_{j}}^{t},f_{j},u_{0,j}})$ converges to $u$ as $j \to +\infty$ in the following sense:
\begin{equation*}\label{3.20}
u_{j} \to u \quad \textrm{ strongly in } \; \mathscr{C}([0,T];L^2(\Omega)).
\end{equation*}
Furthermore, the limit $u$ is a unique weak solution of $(\mathscr{C}_{\varphi_{\infty}^{t},f,u_{0}})$, where $\varphi^{t}_{\infty}$ is defined by
\begin{equation*}\label{3.21}
\varphi^{t}_{\infty}(u) := 
\begin{cases}
0 \quad & \textrm{if} \; u \in \mathtt{K}^{t}, \\
+\infty & \textrm{if} \; u \in L^2(\Omega) \setminus \mathtt{K}^{t}.
\end{cases}
\end{equation*}
In particular, if $(1/\wp_j)\displaystyle \int_{\Omega^{2}} \left|\mathcal{G}_{(s,\wp_{j})}u_{0,j}(x) \right|^{\wp_j}\,  dx\,dy$ is bounded as $j \to +\infty$, then the limit $u$ is a strong solution of $(\mathscr{C}_{\varphi_{\infty}^{t},f,u_{0}})$.
\end{theorem}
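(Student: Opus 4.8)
The plan is to realize both problems $(\mathscr{C}_{\varphi_{\wp_j}^{t},f_j,u_{0,j}})$ and the limiting problem $(\mathscr{C}_{\varphi_{\infty}^{t},f,u_0})$ as gradient flows (in the sense of the subdifferential) of time-dependent convex functionals on the pivot space $H:=L^2(\Omega)$, and then invoke a stability theorem for such flows under Mosco convergence of the functionals. Concretely, for fixed $t$ set
\begin{equation*}
\varphi^{t}_{\wp_j}(u) := \frac{1}{\wp_j}\int_{\Omega^{2}} \mathtt{A}(x,y,t)\,\bigl|\mathcal{G}_{(s,\wp_j)}u(x,y)\bigr|^{\wp_j}\,dx\,dy
\end{equation*}
if the integral is finite (and $u\in\mathcal W_0^{s,\wp_j}(\Omega)$), and $+\infty$ otherwise; one checks that $\varphi^{t}_{\wp_j}$ is proper, convex, lower semicontinuous on $H$ and that its subdifferential is exactly $(-\Delta)^s_{\wp_j,\mathtt A}$ with the appropriate domain, so that Theorem~\ref{teo 3.4} identifies $u_j$ as the unique strong/weak solution of $u_j'(t)+\partial\varphi^{t}_{\wp_j}(u_j(t))\ni f_j(t)$, $u_j(0)=u_{0,j}$. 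The first substantive step is then to prove that, for a.e. (indeed every) $t\in[0,T]$,
\begin{equation*}
\varphi^{t}_{\wp_j} \xrightarrow{\ \mathrm{M}\ } \varphi^{t}_{\infty}\quad\text{in }H\text{ as }j\to\infty,
\end{equation*}
i.e. the two Mosco conditions: (liminf) if $u_j\rightharpoonup u$ weakly in $H$ then $\varphi^t_\infty(u)\le\liminf_j\varphi^t_{\wp_j}(u_j)$; (recovery) for every $u\in H$ there is $u_j\to u$ strongly in $H$ with $\varphi^t_{\wp_j}(u_j)\to\varphi^t_\infty(u)$.

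For the liminf inequality the key is a Hölder/normalization trick: if $\varphi^t_{\wp_j}(u_j)\le C$, then for any exponent $q<\infty$ one gets, by Hölder on $\Omega^2$ with the finite measure $d\mu$ (here one uses $\mathtt A\le a_0$ and, on the lower side, the hypothesis $\mathtt A>0$ together with $(W_1)$ to bound $1/\mathtt A$ on a suitable set), that $\|\mathcal G_s u_j\|_{L^q(\Omega^2,d\mu)}$ stays bounded as $j\to\infty$ uniformly, so passing to the limit first in $j$ and then letting $q\to\infty$ yields $\mathtt A(x,y,t)\,|\mathcal G_s u(x,y)|\le 1$ a.e., i.e. $u\in\mathtt K^t$ and $\varphi^t_\infty(u)=0$. (If the bound $C$ is $+\infty$ there is nothing to prove.) For the recovery sequence, given $u\in\mathtt K^t$ one simply takes $u_j\equiv u$: since $|\mathcal G_s u|\le 1/\mathtt A$ a.e., the integrand $\mathtt A|\mathcal G_{(s,\wp_j)}u|^{\wp_j}=\mathtt A\,|x-y|^{-N}(\mathtt A|\mathcal G_s u|)^{\wp_j}\cdot(\mathtt A|\mathcal G_su|)^{-1}\cdots$ — more precisely $\mathtt A\,|\mathcal G_{(s,\wp_j)}u|^{\wp_j}\le a_0\,|x-y|^{-N}\bigl(\mathtt A|\mathcal G_s u|\bigr)^{\wp_j}$ tends to $0$ pointwise and is dominated, so $\varphi^t_{\wp_j}(u)\to0=\varphi^t_\infty(u)$; if $u\notin\mathtt K^t$ both sides are $+\infty$. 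The hypothesis $u_0\in\mathtt K^0$ is exactly what makes the initial data admissible for the limit flow, and in combination with $u_{0,j}\to u_0$ in $H$ it provides the initial-layer control.

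The second substantive step is the passage to the limit in the flows. I would derive the standard a priori estimates: testing the equation for $u_j$ with $u_j$ (resp. with $u_j'$) and using Young's inequality gives bounds for $u_j$ in $L^\infty(0,T;H)$, for $u_j'$ in $L^2(0,T;H)$, and for $\varphi^{t}_{\wp_j}(u_j(t))$ uniformly in $t$ and $j$; here the $W^{1,2}$-regularity of $\sigma$ in $(W_1)$ is used to handle the $\partial_t\mathtt A$ term that appears when differentiating $t\mapsto\varphi^t_{\wp_j}(u_j(t))$. By Aubin–Lions (with the compact embedding $\mathcal W_0^{s,\wp^-}(\Omega)\hookrightarrow\hookrightarrow L^2(\Omega)$, or more simply using the equi-boundedness of $\mathcal G_s u_j$ in $L^q$ for some $q>1$) we extract $u_j\to u$ strongly in $\mathscr C([0,T];L^2(\Omega))$. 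Then Mosco convergence of $\varphi^{t}_{\wp_j}\to\varphi^t_\infty$ together with strong $H$-convergence of $u_j$ and weak $L^2(0,T;H)$-convergence of $u_j'$ lets one pass to the limit in the variational inequality
\begin{equation*}
\int_0^T\langle u_j'(t)-f_j(t),\,v(t)-u_j(t)\rangle_H\,dt + \int_0^T\varphi^{t}_{\wp_j}(u_j(t))\,dt \le \int_0^T\varphi^{t}_{\wp_j}(v(t))\,dt,
\end{equation*}
valid for all admissible $v$, to obtain the corresponding inequality for $\varphi^t_\infty$, which characterizes $u$ as the (unique, by maximal monotonicity of $\partial\varphi^t_\infty$) weak solution of $(\mathscr C_{\varphi^t_\infty,f,u_0})$. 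The final ``in particular'' assertion follows because the extra hypothesis that $\varphi^{0}_{\wp_j}(u_{0,j})$ is bounded upgrades the above to a uniform bound on $\varphi^t_{\wp_j}(u_j(t))$ up to $t=0$, hence to $u_j'\in L^2(0,T;H)$ with the limit retaining $u'\in L^2$ and $u(t)\in\mathtt K^t$ for a.e.\ $t$ — i.e.\ a strong solution in the sense of Definition~\ref{31}.

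I expect the main obstacle to be the liminf half of the Mosco convergence, specifically making the $q\to\infty$ argument fully rigorous: one must pass to the limit in $j$ with $q$ fixed (using lower semicontinuity of the $L^q(d\mu)$ seminorm of the Hölder quotient under weak $L^2$-convergence of $u_j$, plus a uniform bound obtained by Hölder from the $\varphi^t_{\wp_j}$-bound — and this Hölder step must be done carefully because the exponent $\wp_j$ is varying and the weight $\mathtt A$, while bounded above, is only assumed positive, not bounded below), and only then let $q\to\infty$ to recover the $L^\infty$ constraint defining $\mathtt K^t$. A secondary delicate point is the uniformity in $t$: since $\mathtt K^t$ genuinely varies with $t$ through $\sigma(t)$, one needs the Mosco convergence to hold for (almost) every $t$ with enough control to integrate in time, which is where the product structure $\mathtt A(x,y,t)=\mathfrak a(x,y)\sigma(t)$ and $\sigma\in W^{1,2}(0,T)\hookrightarrow\mathscr C([0,T])$ from $(W_1)$ are essential.
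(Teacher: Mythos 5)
Your proposal follows essentially the same route as the paper: realize both problems as subdifferential gradient flows on $L^2(\Omega)$ of the time-dependent convex potentials $\varphi^{t}_{\wp}$, prove Mosco convergence $\varphi^{t}_{\wp_j}\to\varphi^{t}_{\infty}$ for each $t$ via the same recovery sequence ($u_j\equiv u$) and the same normalization-plus-H\"older-plus-$q\to\infty$ argument for the liminf inequality (this is exactly the paper's Lemma~\ref{lemma 3.6}), and then invoke an abstract stability theorem for subdifferential flows under Mosco convergence. The only real difference is packaging: where you sketch the a priori estimates and Aubin--Lions compactness directly, the paper verifies the Kenmochi framework conditions $(\varphi^{t}_{\wp_j})_{t\in[0,T]}\in\mathcal{B}(\alpha_1,0,C_0,\{\mathfrak{C}_\varrho\}_{\varrho\geqslant 0})$ and $(\varphi^{t}_{\infty})_{t\in[0,T]}\in\Gamma(\alpha_1,0)$, with $\alpha_1(\varrho,t)=\tfrac{\varrho}{a_0}\|\mathfrak{a}\|_{L^{\infty}(\Omega^2)}\sigma(t)$ carried over from the proof of Theorem~\ref{teo 3.4}, and then applies Theorem~\ref{teo 2.7} as a black box --- this $\mathcal{B}$-class membership is precisely the uniform-in-$j$ time-regularity control that you correctly flag as the delicate point of the time-dependence.
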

For periodic problem  \eqref{2.18},  
 first we will find a solution  and  after we will investigate the asymptotic behavior of $u_{\wp}$ as $\wp \to +\infty$. Now we state our third and fourth results.

\begin{theorem}\label{teo 3.8}
Assume $(W_{1})$ and let $p \in [2,+\infty)$. Then for all $f \in L^2(0,T;L^2(\Omega))$, the problem  $(\mathcal{P}_{\varphi^{t}_{\wp},f })$,  has a unique strong solution $u_{\wp}$.
\end{theorem}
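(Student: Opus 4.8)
\emph{Strategy.} I would work in the Hilbert space $H:=L^{2}(\Omega)$ and recast $(\mathcal{P}_{\varphi^{t}_{\wp},f})$ as a time-periodic problem for the subdifferential flow $u'(t)+\partial\varphi^{t}_{\wp}(u(t))\ni f(t)$ in $H$, where for $t\in[0,T]$
\begin{equation*}
\varphi^{t}_{\wp}(v):=\frac{1}{\wp}\int_{\Omega^{2}}\mathtt{A}(x,y,t)\,\frac{|v(x)-v(y)|^{\wp}}{|x-y|^{N+s\wp}}\,dx\,dy
\end{equation*}
for $v\in W^{s,\wp}_{0}(\Omega)$ and $\varphi^{t}_{\wp}(v):=+\infty$ otherwise; this is proper, convex and lower semicontinuous on $H$, with $D(\varphi^{t}_{\wp})=\mathbb{X}$ and $\partial\varphi^{t}_{\wp}=(-\Delta)^{s}_{\wp,\mathtt{A}}$. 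Under $(W_{1})$ the family $\{\varphi^{t}_{\wp}\}_{t\in[0,T]}$ has the $t$-regularity (of Kenmochi type) already used for Theorem~\ref{teo 3.4}: for every $v_{0}\in H$ and $f\in L^{2}(0,T;H)$ the Cauchy problem $(\mathscr{C}_{\varphi^{t}_{\wp},f,v_{0}})$ has a unique weak solution $u(\cdot\,;v_{0})\in\mathscr{C}([0,T];H)$, it is a strong solution on $[\delta,T]$ for each $\delta>0$ (smoothing effect), and two solutions obey $\|u(t;v_{0})-u(t;\tilde v_{0})\|_{H}\le\|v_{0}-\tilde v_{0}\|_{H}$. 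Hence I may define the Poincaré return map $S:H\to H$, $S(v_{0}):=u(T;v_{0})$, and a $T$-periodic solution is exactly a fixed point of $S$.

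\emph{A priori bound and self-mapping.} Testing the Cauchy equation with $u(t)$, using that $\varphi^{t}_{\wp}$ is positively homogeneous of degree $\wp$ (so $\langle\partial\varphi^{t}_{\wp}(u),u\rangle=\wp\,\varphi^{t}_{\wp}(u)\ge0$), the coercivity $\varphi^{t}_{\wp}(v)\ge c_{0}\|v\|_{H}^{\wp}$ granted by $(W_{1})$ together with the fractional Poincaré inequality on $\mathbb{X}$, and Young's inequality on $\langle f,u\rangle$, one obtains
\begin{equation*}
\frac{1}{2}\frac{d}{dt}\|u(t)\|_{H}^{2}+\frac{\wp c_{0}}{2}\,\|u(t)\|_{H}^{\wp}\le C\,\|f(t)\|_{H}^{\wp'}.
\end{equation*}
Here the hypothesis $p\in[2,+\infty)$ enters decisively through the elementary bound $a^{\wp}\ge a^{2}-1$ for $a\ge0$, which converts this into $\tfrac{d}{dt}\|u\|_{H}^{2}+\wp c_{0}\|u\|_{H}^{2}\le 2C\|f\|_{H}^{\wp'}+\wp c_{0}$; Gronwall's inequality over $[0,T]$ (with $L^{2}(0,T;H)\hookrightarrow L^{\wp'}(0,T;H)$ since $\wp'\le2$) then yields $\|S(v_{0})\|_{H}^{2}\le e^{-\wp c_{0}T}\|v_{0}\|_{H}^{2}+M(f)$, where $M(f)$ depends only on $f,\wp,c_{0},T$. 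Choosing $R^{2}:=M(f)/(1-e^{-\wp c_{0}T})$ makes $S$ map the closed ball $\overline{B}_{R}\subset H$ into itself.

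\emph{Fixed point, regularity, uniqueness.} On $\overline{B}_{R}$ the map $S$ is continuous (indeed nonexpansive, by the contraction estimate) and compact: the smoothing estimates for time-dependent subdifferential flows bound $\sup_{v_{0}\in\overline{B}_{R}}\varphi^{T}_{\wp}(u(T;v_{0}))<\infty$, so $S(\overline{B}_{R})$ is bounded in $\mathbb{X}=W^{s,\wp}_{0}(\Omega)$, which embeds compactly into $H=L^{2}(\Omega)$ for the bounded domain $\Omega$, hence $S(\overline{B}_{R})$ is precompact in $H$. Schauder's fixed point theorem gives $v_{0}^{\ast}\in\overline{B}_{R}$ with $S(v_{0}^{\ast})=v_{0}^{\ast}$, and $u^{\ast}:=u(\cdot\,;v_{0}^{\ast})$ is a $T$-periodic weak solution of $(\mathcal{P}_{\varphi^{t}_{\wp},f})$; since $v_{0}^{\ast}=u^{\ast}(T)\in\mathbb{X}$ by the smoothing effect, the second part of Theorem~\ref{teo 3.4} applied to $(\mathscr{C}_{\varphi^{t}_{\wp},f,v_{0}^{\ast}})$ upgrades $u^{\ast}$ to a strong solution. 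For uniqueness, if $u,\tilde u$ are two $T$-periodic strong solutions, subtracting the equations and testing with $u-\tilde u$ shows, by monotonicity of $\partial\varphi^{t}_{\wp}$, that $t\mapsto\|u(t)-\tilde u(t)\|_{H}$ is nonincreasing; periodicity forces it to be constant, so the monotonicity defect $\int_{\Omega^{2}}\mathtt{A}(x,y,t)\big(|a|^{\wp-2}a-|b|^{\wp-2}b\big)(a-b)\,|x-y|^{-N-s\wp}\,dx\,dy$, with $a=u(x,t)-u(y,t)$ and $b=\tilde u(x,t)-\tilde u(y,t)$, vanishes for a.e.\ $t$; strict monotonicity of $z\mapsto|z|^{\wp-2}z$ gives $u(x,t)-u(y,t)=\tilde u(x,t)-\tilde u(y,t)$ a.e., and the homogeneous Dirichlet condition yields $u\equiv\tilde u$.

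\emph{Main obstacle.} The delicate steps are: (i) the coercivity $\varphi^{t}_{\wp}(v)\gtrsim\|v\|_{H}^{\wp}$ uniformly in $t$, which requires extracting a positive lower bound from the product structure $\mathtt{A}=\mathfrak{a}\sigma$ in $(W_{1})$ and a fractional Poincaré inequality on $W^{s,\wp}_{0}(\Omega)$, and is exactly where $p\ge2$ is used to pass from the homogeneity exponent $\wp$ to an $L^{2}$-dissipation estimate of Gronwall type; and (ii) verifying that the $t$-dependence carried by $\sigma\in W^{1,2}(0,T)$ fits the hypotheses of the time-dependent subdifferential machinery, so that the smoothing effect and the uniform-in-$v_{0}$ bound $\sup_{v_{0}\in\overline{B}_{R}}\|S(v_{0})\|_{\mathbb{X}}<\infty$ needed for compactness are genuinely available.
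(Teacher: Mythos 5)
Your proposal is mathematically sound but takes a genuinely different route from the paper. The paper's proof is much shorter: having already shown (in the proof of Theorem~\ref{teo 3.4}) that $(\varphi^{t}_{\wp})_{t\in[0,T]}\in\Gamma(\alpha_{1},0)$, it only needs to upgrade this to the Kenmochi class $\Psi_{\Theta}(\alpha_{1},0,\widetilde{C})$, i.e.\ to prove the estimate $|u|^{2}_{\mathcal{H}}\le\widetilde{C}\,(\varphi^{t}_{\wp}(u)+1)$ for all $u\in D(\varphi^{t}_{\wp})$; this is where $\wp\ge2$ enters, via the pointwise inequality $a^{2}\le\frac{2}{\wp}a^{\wp}+\frac{\wp-2}{\wp}$ combined with a fractional Poincar\'e inequality. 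Existence then falls out of the abstract periodic result Theorem~\ref{teo 2.9} (Kenmochi), and uniqueness from the strict convexity of $\varphi^{t}_{\wp}$, which is also built into Theorem~\ref{teo 2.9}. You instead rebuild this conclusion by hand: the Poincar\'e return map $S(v_{0})=u(T;v_{0})$, a Gronwall dissipation estimate to show $S$ maps a ball into itself, the smoothing effect plus compact embedding $W^{s,\wp}_{0}(\Omega)\hookrightarrow\hookrightarrow L^{2}(\Omega)$ for compactness, Schauder's fixed point theorem, and a direct monotonicity-plus-periodicity argument for uniqueness. What each buys: the paper's route is essentially free once the Kenmochi toolbox has been set up (which it has, for the Cauchy problem), while your route is more self-contained and makes the mechanism — dissipation, smoothing, compactness, fixed point — visible, at the cost of reproving in this instance what the abstract theorem packages.

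One shared caveat worth flagging: both your coercivity $\varphi^{t}_{\wp}(v)\gtrsim\|v\|_{H}^{\wp}$ and the paper's chain $|u|^{2}_{\mathcal{H}}\le C|\mathcal{G}_{(s,\wp)}u|^{2}_{\mathcal{H}}\le C'\int_{\Omega^{2}}\mathtt{A}\,|\mathcal{G}_{(s,\wp)}u|^{2}$ tacitly use that $\mathtt{A}$ is bounded \emph{below} away from zero; assumption $(W_{1})$ as stated only gives $0<\mathtt{A}\le a_{0}$ a.e., which does not literally guarantee $\operatorname{ess\,inf}\mathtt{A}>0$. This is a pre-existing subtlety in the paper's own derivation (the normalization by $|\mathtt{A}|^{2}_{L^{\infty}(Q)}$ there is in fact suspect) and not a defect specific to your argument, but in a rigorous write-up you should make the lower-boundedness explicit. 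A minor simplification in your approach: you do not actually need the $L^{2}$-contraction to shrink the ball — a uniform a priori bound on $\|u(T;v_{0})\|_{H}$ over any bounded set of initial data already suffices for Schauder once compactness is in hand.
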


\begin{theorem}\label{teo 3.9}
Assume $(W_{1})$ and that $\mathtt{A}(x,y,0) \leqslant \mathtt{A}(x,y,T)$ for a.e. $(x,y) \in \Omega^{2}$. Let $(\wp_j)_{j\in \mathbb{N}}$ be a sequence in $[2,+\infty)$ such that $\wp_j \to +\infty$ as $j \to +\infty$ and let $f_j,f \in L^2(0,T;L^2(\Omega))$ be such that
\begin{equation*}\label{3.42}
f_j \rightharpoonup f \quad \textrm{ in } \; L^2(0,T;L^2(\Omega)).
\end{equation*}
Then a subsequence $(j_k)_{j\in \mathbb{N}}$ of $(j)_{j\in \mathbb{N}}$ can be taken in such a way that the unique strong solution $u_{j_k}$ of $(\mathcal{P}_{\varphi^{t}_{\wp_{j_k}},f_{j_k}})$ converges to $u$ as $k \to \infty$ in the following sense:
\begin{equation*}\label{3.43}
u_{j_k} \to u \textrm{ in } \mathscr{C}([0,T];L^2(\Omega)), \textrm{ weakly in }W^{1,2}(0,T;L^{2}(\Omega)).
\end{equation*}
Furthermore, the limit $u$ is a strong solution of $(\mathcal{P}_{\varphi^{t}_{\infty},f})$.
\end{theorem}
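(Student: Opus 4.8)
The plan is to combine the energy estimates coming from the subdifferential structure with a compactness argument, and then to identify the limit via Mosco convergence of the functionals $\varphi^{t}_{\wp_j}$ to $\varphi^{t}_{\infty}$, in the spirit of Theorem \ref{teo 3.5} but now in the periodic setting. Concretely, I would proceed as follows.

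\emph{Step 1: Uniform a priori estimates.} By Theorem \ref{teo 3.8}, for each $j$ the problem $(\mathcal{P}_{\varphi^{t}_{\wp_{j}},f_{j}})$ has a unique strong solution $u_{j} := u_{\wp_j}$ with $u_j(\cdot,0) = u_j(\cdot,T)$. Testing the equation with $u_j$ and using that the nonlocal term is $\langle \partial \varphi^{t}_{\wp_j}(u_j), u_j\rangle \ge 0$ together with the periodicity to kill the boundary term $\tfrac12\|u_j(T)\|_2^2 - \tfrac12\|u_j(0)\|_2^2 = 0$, one gets $\int_0^T \langle \partial\varphi^t_{\wp_j}(u_j),u_j\rangle\,dt \le C$ where $C$ depends only on $\|f_j\|_{L^2(0,T;L^2)}$, hence on $\sup_j\|f_j\|$ which is finite since $f_j \rightharpoonup f$. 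Next, testing with $\partial_t u_j$ and exploiting the chain rule for $t\mapsto \varphi^t_{\wp_j}(u_j(t))$ (this is where Assumption $(W_1)$, in particular $\sigma\in W^{1,2}(0,T)$, is essential: it controls $\partial_t\varphi^t_{\wp_j}$), one obtains a bound on $\|\partial_t u_j\|_{L^2(0,T;L^2(\Omega))}$ and on $\sup_t \varphi^t_{\wp_j}(u_j(t))$, again uniform in $j$ provided the initial/periodic value's energy is controlled — this is precisely the role of the hypothesis $\mathtt{A}(x,y,0)\le \mathtt{A}(x,y,T)$, which guarantees monotonicity of the "energy budget" along one period and lets one close the estimate without a priori knowing $u_j(0)$.

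\emph{Step 2: Compactness and passage to the limit.} From the uniform bound on $\|\partial_t u_j\|_{L^2(0,T;L^2)}$ and on $\|u_j\|_{L^\infty(0,T;L^2)}$, and from the key estimate (as in Theorem \ref{teo 3.5}) that a uniform bound on $\tfrac{1}{\wp_j}\int_{\Omega^2}|\mathcal{G}_{(s,\wp_j)}u_j|^{\wp_j}\,dx\,dy$ forces, via Hölder with exponent $\wp_j\to\infty$ on the finite-measure space $(\Omega^2,d\mu)$, a uniform bound on $\|\mathcal{G}_s u_j\|_{L^r(\Omega^2,d\mu)}$ for every fixed $r<\infty$ and ultimately $|\mathcal{G}_s u_j| \le 1/\mathtt{A}(x,y,t)$ in the limit, I extract a subsequence $(j_k)$ with $u_{j_k}\to u$ strongly in $\mathscr{C}([0,T];L^2(\Omega))$ (Ascoli–Arzelà / Aubin–Lions, using $W^{s,2}_0(\Omega)\hookrightarrow L^2(\Omega)$ compactly) and $\partial_t u_{j_k}\rightharpoonup \partial_t u$ weakly in $L^2(0,T;L^2(\Omega))$. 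Strong uniform convergence in particular preserves periodicity: $u(\cdot,0)=u(\cdot,T)$.

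\emph{Step 3: Identification of the limit equation via Mosco convergence.} The functionals $\varphi^t_{\wp_j}(v) = \tfrac{1}{\wp_j}\int_{\Omega^2}\mathtt{A}(x,y,t)^{?}\,|\mathcal{G}_{(s,\wp_j)}v|^{\wp_j}\,d\mu$ Mosco-converge in $L^2(\Omega)$ to $\varphi^t_\infty$ as $j\to\infty$ for each $t$; this is the nonlocal, weighted analogue of the classical $p\to\infty$ result and should already be contained in (or follow from the proof of) Theorem \ref{teo 3.5}. By the general theory relating Mosco convergence of convex functionals to graph convergence of their subdifferentials (Attouch, Brézis), and standard stability results for evolution equations governed by time-dependent subdifferentials, the limit $u$ solves $\partial_t u + \partial\varphi^t_\infty(u) \ni f$, i.e. $u$ is a strong solution of $(\mathcal{P}_{\varphi^t_\infty,f})$. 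Here one passes to the limit in the variational inequality $\langle \partial_t u_{j_k} - f_{j_k}, u_{j_k} - v\rangle \le \varphi^t_{\wp_{j_k}}(v) - \varphi^t_{\wp_{j_k}}(u_{j_k})$ for test functions $v$, using the strong $L^2$ convergence of $u_{j_k}$, the weak convergence of $\partial_t u_{j_k}$ and $f_{j_k}$, and the $\liminf$/recovery-sequence inequalities from Mosco convergence; the constraint $u(t)\in \mathtt{K}^t$ follows from the uniform energy bound as in Step 2.

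\emph{Main obstacle.} The delicate point is Step 1: obtaining energy estimates that are uniform in $j$ for a \emph{periodic} problem, where one cannot prescribe the initial datum and the naive test-function computations only give information up to the unknown $\|u_j(0)\|_2$. The hypothesis $\mathtt{A}(x,y,0)\le\mathtt{A}(x,y,T)$ is exactly what is needed to run a Poincaré-type / fixed-point argument on the period map and bootstrap a uniform bound; making this rigorous in the variable-exponent nonlocal setting — and simultaneously controlling the $j$-dependence so that the Hölder-in-$\wp_j$ trick of Step 2 applies — is the technical heart of the proof. A secondary subtlety is that here we only get a subsequence and weak $W^{1,2}(0,T;L^2)$ convergence (not the full strong-convergence-of-the-whole-sequence statement of Theorem \ref{teo 3.5}), because uniqueness for the limit periodic problem $(\mathcal{P}_{\varphi^t_\infty,f})$ is not asserted, so one genuinely stops at the subsequential statement.
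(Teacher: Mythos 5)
Your overall architecture (a priori bounds $\to$ compactness $\to$ limit identification via Mosco convergence) is the right one, but it is not the route the paper takes, and the place you yourself flag as the ``technical heart'' is exactly where your write-up leaves a gap. The paper does not redo the periodic a priori estimates by hand: instead it verifies the hypotheses of the abstract convergence result for time-periodic evolution equations, Theorem~\ref{teo 2.10} (from Akagi/Kenmochi), and then invokes it as a black box. Concretely, the paper's proof has three checks: (i) the family $(\varphi^{t}_{\wp_j})_{t\in[0,T]}$ lies in $\mathcal{B}_{\Theta}(\alpha_1,0,C^{\star},\{\mathfrak{C}_{\varrho}\})$ with constants independent of $j$ --- the $\Psi_{\Theta}$ part was already established in the proof of Theorem~\ref{teo 3.8}, and the hypothesis $\mathtt{A}(x,y,0)\leqslant \mathtt{A}(x,y,T)$ is used precisely (and only) to get $\varphi^{0}_{\wp}\leqslant\varphi^{T}_{\wp}$, i.e.\ condition (iv) of membership in $\mathcal{B}_{\Theta}$; (ii) the level-set compactness condition $(\mathfrak{C}_1)$, which follows from the estimate
\[
\Big(\int_{\Omega^{2}}|\mathcal{G}_{(s,\wp_j)}u_j|^{2}\,dx\,dy\Big)^{1/2}\leqslant \frac{1}{|\mathtt{A}(\cdot,\cdot,t)|_{L^\infty}}(\wp_j\mu)^{1/\wp_j}|\Omega^{2}|^{(\wp_j-2)/(2\wp_j)}\leqslant \mathfrak{C}
\]
uniformly in $j$, together with the compactness of $W^{s,2}_{0}(\Omega)\hookrightarrow L^{2}(\Omega)$; and (iii) the Mosco convergence $\varphi^{t}_{\wp_j}\to\varphi^{t}_{\infty}$ (Lemma~\ref{lemma 3.6}). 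All the uniform energy estimates, the fixed-point/period-map argument, and the subsequence extraction that you describe in Step~1 are internal to Theorem~\ref{teo 2.10} and do not need to be re-proved.

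The genuine gap in your proposal, then, is Step~1: you assert that testing with $u_j$ and $\partial_t u_j$ plus the monotonicity of $\mathtt{A}$ closes the periodic energy estimate, but you only gesture at a ``Poincaré-type / fixed-point argument on the period map'' without giving it. This is not a cosmetic omission --- for the periodic problem you cannot prescribe $u_j(0)$, and bootstrapping a $j$-uniform bound is nontrivial; indeed this is exactly why the paper relies on the pre-existing periodic machinery (Theorem~\ref{teo 2.10} and the sets $\Gamma_{\Theta}$, $\mathcal{B}_{\Theta}$). If you wanted to follow your own route you would essentially have to reprove the relevant part of \cite{akagi}/\cite{ken1}; the paper's route, which you should adopt, is to check membership in $\mathcal{B}_{\Theta}$ (this is where the $\mathtt{A}(x,y,0)\leqslant\mathtt{A}(x,y,T)$ assumption enters), verify $(\mathfrak{C}_1)$, cite Lemma~\ref{lemma 3.6}, and invoke Theorem~\ref{teo 2.10}. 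Your Steps~2 and~3 are substantively correct and align with what the abstract theorem delivers, including the subsequential conclusion and the lack of uniqueness for the limit periodic problem (which the paper illustrates in Remark~\ref{rem 3.10}).
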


\subsection{Assumptions on the exponents}\label{asumptions}
Now we will establish the hypotheses about the sequence $(\wp_{\jmath}(\cdot,\cdot))_{\jmath\in \mathbb{N}}.$ The sequence $(\wp_{\jmath}(\cdot,\cdot))_{\jmath\in \mathbb{N}}$ is a sequence of class $\mathscr{C}(\overline{\Omega^{2}};(1,\infty))$ satisfying the following assumptions:

\begin{equation}\label{cs1}
\lim_{\jmath \to \infty}\wp^{-}_{\jmath} =\infty \;\; \mbox{and} \;\; \lim_{\jmath \to \infty}(\wp^{+}_{\jmath})^{\frac{1}{\wp^{-}_{\jmath}}}=1,
\end{equation}
where, denote by
\begin{equation*}
\wp^{+}_{\jmath} :=\max_{(x,y) \in \overline{\Omega^{2}}} \wp_{\jmath}(x,y), \;\; \wp^{-}_{\jmath}:= \min_{(x,y) \in \overline{\Omega^{2}}} \wp_{\jmath}(x,y).
\end{equation*}
Finally, for our last result, we will use the notation

\begin{equation}\label{sb1}
\wp_{\jmath}(x,y) =
\begin{cases}
\kappa_{\jmath}(x,y) \to \infty \mbox{ if } (x,y) \in \mathcal{O}^{2}, \\
\kappa(x,y) < \infty \mbox{ if }(x,y) \in (\Omega^{2}) \setminus \overline{\mathcal{O}^{2}} 
\end{cases}
\end{equation}
as $\jmath \to \infty,$ where $\mathcal{O}$ is a nonempty open subset of $\Omega$ such that $|\mathcal{O}^{2}|$, $|(\Omega^{2}) \setminus (\overline{\mathcal{O}^{2}})| > 0$ and $ \kappa : (\Omega^{2}) \setminus (\overline{\mathcal{O}^{2}}) \to (1,\infty)$ and $\kappa_{\jmath} : \mathcal{O}^{2} \to (1,\infty)$ are continuous

\begin{equation*}
\kappa^{+}:= \displaystyle\max_{\displaystyle(x,y) \in (\Omega^{2}) \setminus (\overline{\mathcal{O}^{2}})} \kappa(x,y) \;\; \mbox{  and  } \;\; \kappa^{-}:= \displaystyle\min_{\displaystyle(x,y) \in (\Omega^{2}) \setminus (\overline{\mathcal{O}^{2}})}\kappa(x,y),
\end{equation*}

\begin{equation*}
\kappa^{+}_{\jmath}:=\displaystyle\max_{\displaystyle(x,y) \in \mathcal{O}^{2}} \kappa_{\jmath}(x,y) \;\; \mbox{  and  } \;\; \kappa^{-}_{\jmath}:= \displaystyle\min_{\displaystyle(x,y) \in \mathcal{O}^{2}} \kappa_{\jmath}(x,y)
\end{equation*}
and assume that
\begin{equation}\label{sb2}
1 < \kappa^{-}_{\jmath},\kappa^{-} \;\; \mbox{and} \;\; \kappa^{+}_{\jmath},\kappa^{+} < \infty \;\; \mbox{for all} \;\; \jmath \in \mathbb{N}
\end{equation}
and 
\begin{equation}\label{sb3}
\lim_{\jmath\to \infty}\kappa^{-}_{\jmath}=\infty \;\; \mbox{and} \;\; \lim_{\jmath\to \infty}(\kappa^{+}_{\jmath})^{\frac{1}{\kappa^{-}_{\jmath}}}= 1.
\end{equation}

The fifth result of this paper is given in the following theorem.
\begin{theorem}\label{t1}
Let $( \wp_{\jmath}(\cdot,\cdot))_{\jmath \in \mathbb{N}}$ be the sequence of exponents satisfied the condition \eqref{cs1}. Let  $(f_{\wp_{\jmath}(\cdot)})_{\jmath\in \mathbb{N}}\subset  L^{2} (0,T; L^{2}(\Omega))$ and $(u_{\wp_{\jmath}(\cdot)}^{0})_{\jmath\in \mathbb{N}} \subset L^{2}(\Omega)$ sequences as in problem \eqref{aproximado}  be such that

\begin{equation}\label{cs2}
f_{\wp_{\jmath}(\cdot)} \to f\mbox{  strongly in  } L^{2}(0,T; L^{2}(\Omega)),
\end{equation}

\begin{equation}\label{cs3}
u_{\wp_{\jmath}(\cdot)}^{0} \to u_0 \mbox{  strongly in  } L^{2}(\Omega),
\end{equation}
and let $(u_{\wp_{\jmath}(\cdot)})_{\jmath\in \mathbb{N}}$ be the sequence of solutions of \eqref{aproximado}. Then there exists a function 
$$\mathfrak{u}_{\infty} \in \mathscr{C}([0,T]; L^{2}(\Omega)) \bigcap W^{1,2}_{\mbox{loc}} ((0,T]; L^{2}(\Omega))$$
 such that

\begin{equation*}
\int_{\Omega} \left( f(x,t) - \partial_t \mathfrak{u}_{\infty}(x,t) \right) (v(x) - \mathfrak{u}_{\infty}(x,t))\; dx \leqslant 0 \;\; \mbox{for a.a.} \;\; t \in (0,T),
\end{equation*}
and for all $v \in\mathbb{K}_{\infty}$

\begin{equation*}
\mathfrak{u}_{\infty}(t) \in \mathbb{K}_{\infty} \;\;\mbox{   for a.a.   }t \in (0,T) \;\;\mbox{   and   } \;\;\mathfrak{u}_{\infty}(\cdot,0)=\mathfrak{u}_{\infty}^{0} \mbox{  in } \Omega,
\end{equation*}
 where
\begin{equation*}
\mathbb{K}_{\infty}:= \left\lbrace v\in W^{s,2}_{0}(\Omega) : \sup_{ (x,y) \in \Omega^{2},\, x\neq y  }| \mathcal{G}_{s}v(x,y)| \leqslant 1 \right\rbrace
\end{equation*}
and
\begin{equation}\label{cs4}
u_{\wp_{\jmath}(\cdot)} \to \mathfrak{u}_{\infty} \;\; \mbox{strongly in} \;\; \mathscr{C}([0,T];L^{2}(\Omega)),
\end{equation}

\begin{equation}\label{cs5}
\sqrt{t}\,\, \dfrac{d u_{\wp_{\jmath}(\cdot)}}{dt} \to \sqrt{t}\,\, \dfrac{d\mathfrak{u}_{\infty}}{dt} \;\; \mbox{strongly in} \;\; L^{2}(0,T;L^{2}(\Omega)).
\end{equation}

\noindent Moreover, if 
$$\lim_{\jmath\to \infty}\displaystyle\int_{\Omega^{2}} \frac{1}{\wp_{\jmath}(x,y)} \frac{|u_{\wp_{\jmath}(\cdot)}^{0}(x)-u_{\wp_{\jmath}(\cdot)}^{0}(y)|^{\wp_{\jmath}(x,y)}}{|x-y|^{N+s\wp_{\jmath}(x,y)}}\,dx\,dy= 0,$$ then
\begin{equation}\label{11}
u_{\wp_{\jmath}(\cdot)}\to \mathfrak{u}_{\infty} \mbox{ strongly in }W^{1,2}(0,T;L^{2}(\Omega)).
\end{equation}
\end{theorem}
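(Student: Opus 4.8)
The plan is to recast the parabolic problems \eqref{aproximado} as gradient flows in $L^{2}(\Omega)$. For each $\jmath$ put
\begin{equation*}
\varphi_{\wp_{\jmath}(\cdot)}(u):=\int_{\Omega^{2}}\frac{1}{\wp_{\jmath}(x,y)}\,|\mathcal{G}_{(s,\wp_{\jmath}(\cdot,\cdot))}u(x,y)|^{\wp_{\jmath}(x,y)}\,dx\,dy
\end{equation*}
(extended by $+\infty$ off $\mathbb{X}=\mathcal{W}_{0}^{s,\wp_{\jmath}(\cdot,\cdot)}(\Omega)$), a proper convex lower semicontinuous functional on $L^{2}(\Omega)$ whose subdifferential is $(-\Delta)^{s}_{\wp_{\jmath}(\cdot)}$, so that \eqref{aproximado} reads $\frac{d}{dt}u_{\wp_{\jmath}(\cdot)}+\partial\varphi_{\wp_{\jmath}(\cdot)}(u_{\wp_{\jmath}(\cdot)})\ni f_{\wp_{\jmath}(\cdot)}$; the candidate limit problem is the gradient flow of $\varphi_{\infty}:=I_{\mathbb{K}_{\infty}}$, the indicator function of the closed convex set $\mathbb{K}_{\infty}$, and since $\eta\in\partial I_{\mathbb{K}_{\infty}}(v)$ is equivalent to $v\in\mathbb{K}_{\infty}$ together with $(\eta,z-v)_{L^{2}(\Omega)}\leqslant 0$ for every $z\in\mathbb{K}_{\infty}$, this is exactly the variational inequality and the constraint $\mathfrak{u}_{\infty}(t)\in\mathbb{K}_{\infty}$ of the statement. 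Consequently, the whole theorem will follow from the stability of subdifferential evolution equations under Mosco convergence (the mechanism behind Theorems \ref{teo 3.5} and \ref{teo 3.9}), once we show that $\varphi_{\wp_{\jmath}(\cdot)}\to\varphi_{\infty}$ in the sense of Mosco in $L^{2}(\Omega)$: then \eqref{cs2}, \eqref{cs3} (together with $u_{0}\in\mathbb{K}_{\infty}=\overline{D(\varphi_{\infty})}$, the natural analogue of $u_{0}\in\mathtt{K}^{0}$ in Theorem \ref{teo 3.5}) yield \eqref{cs4} and \eqref{cs5}, while the extra hypothesis of the last part is precisely the ``well-preparedness'' $\varphi_{\wp_{\jmath}(\cdot)}(u_{\wp_{\jmath}(\cdot)}^{0})\to 0=\varphi_{\infty}(u_{0})$ that promotes the $\sqrt{t}$-weighted convergence \eqref{cs5} to the full convergence \eqref{11}. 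The substantial point is the Mosco convergence, and this is where condition \eqref{cs1} enters.

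For the $\liminf$-inequality, take $u_{\jmath}\rightharpoonup u$ in $L^{2}(\Omega)$; we may assume $\liminf_{\jmath}\varphi_{\wp_{\jmath}(\cdot)}(u_{\jmath})=:M<\infty$ and, along a subsequence, $\varphi_{\wp_{\jmath}(\cdot)}(u_{\jmath})\leqslant M$. Fix $\sigma\in(0,s)$; writing $d\mu=|x-y|^{-N}dx\,dy$ and $t=|\mathcal{G}_{s}u_{\jmath}(x,y)|$, Young's inequality in the form $t^{2}\leqslant\tfrac{2}{\wp_{\jmath}(x,y)}t^{\wp_{\jmath}(x,y)}+1$ (valid once $\wp^{-}_{\jmath}>2$) gives
\begin{equation*}
\int_{\Omega^{2}}\frac{|u_{\jmath}(x)-u_{\jmath}(y)|^{2}}{|x-y|^{N+2\sigma}}\,dx\,dy\leqslant 2(\mathrm{diam}\,\Omega)^{2(s-\sigma)}M+\int_{\Omega^{2}}|x-y|^{2(s-\sigma)-N}\,dx\,dy,
\end{equation*}
a bound on the Gagliardo seminorm $[u_{\jmath}]_{W^{\sigma,2}(\Omega)}$ that is uniform in $\jmath$ and free of $\wp^{+}_{\jmath}$; since $W^{\sigma,2}(\Omega)\hookrightarrow\hookrightarrow L^{2}(\Omega)$, $(u_{\jmath})$ is precompact in $L^{2}(\Omega)$ and, along a further subsequence, $u_{\jmath}\to u$ a.e.\ in $\Omega$, hence $\mathcal{G}_{s}u_{\jmath}\to\mathcal{G}_{s}u$ a.e.\ in $\Omega^{2}$. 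If $u\notin\mathbb{K}_{\infty}$, choose $\delta>0$ and $E\subset\Omega^{2}$ with $|E|>0$ on which $|\mathcal{G}_{s}u|\geqslant 1+\delta$; by Egorov $|\mathcal{G}_{s}u_{\jmath}|\geqslant 1+\tfrac{\delta}{2}$ on a subset of $E$ of measure bounded below by some $c_{0}>0$, whence
\begin{equation*}
\wp^{+}_{\jmath}M\geqslant\int_{\Omega^{2}}|\mathcal{G}_{s}u_{\jmath}(x,y)|^{\wp_{\jmath}(x,y)}\,d\mu\geqslant c\Big(1+\tfrac{\delta}{2}\Big)^{\wp^{-}_{\jmath}},\qquad c>0,
\end{equation*}
and taking logarithms, dividing by $\wp^{-}_{\jmath}$, and letting $\jmath\to\infty$ contradicts \eqref{cs1}. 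Hence $u\in\mathbb{K}_{\infty}$, so $\varphi_{\infty}(u)=0\leqslant\liminf_{\jmath}\varphi_{\wp_{\jmath}(\cdot)}(u_{\jmath})$.

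For the recovery sequence one must, given $u\in\mathbb{K}_{\infty}$, build $u_{\jmath}\to u$ in $L^{2}(\Omega)$ with $\varphi_{\wp_{\jmath}(\cdot)}(u_{\jmath})\to 0$. The naive choice $u_{\jmath}\equiv u$ fails: $|\mathcal{G}_{s}u|\leqslant 1$ only gives $\varphi_{\wp_{\jmath}(\cdot)}(u)\leqslant(\wp^{-}_{\jmath})^{-1}\int_{\Omega^{2}}d\mu=+\infty$, the divergence coming from the diagonal $x=y$; reconciling the $W^{s,\infty}$-type constraint defining $\mathbb{K}_{\infty}$ with the non-integrability of $|x-y|^{-N}$ is the step I expect to be the main obstacle, and is exactly what distinguishes the fractional problem from its local counterpart. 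The remedy is a three-parameter construction: replace $u$ by $(1-\varepsilon)u$ so that $[\,\cdot\,]_{W^{s,\infty}}\leqslant 1-\varepsilon<1$; then dilate about an interior point of $\Omega$ (rescaling simultaneously by $\lambda^{s}$ so as not to increase the seminorm) to push the support strictly inside $\Omega$; then mollify at scale $\delta$ smaller than the distance of the support to $\partial\Omega$. The resulting $u_{\varepsilon,\lambda,\delta}\in C_{c}^{\infty}(\Omega)$ still satisfies $|\mathcal{G}_{s}u_{\varepsilon,\lambda,\delta}|\leqslant 1-\varepsilon$ and is moreover globally Lipschitz with some constant $L=L_{\varepsilon,\lambda,\delta}$. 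Splitting $\Omega^{2}$ into $\{|x-y|\geqslant r\}$, where $|\mathcal{G}_{s}u_{\varepsilon,\lambda,\delta}|^{\wp_{\jmath}(x,y)}\leqslant(1-\varepsilon)^{\wp^{-}_{\jmath}}$, and $\{|x-y|<r\}$, where $|\mathcal{G}_{s}u_{\varepsilon,\lambda,\delta}(x,y)|\leqslant L|x-y|^{1-s}$, and choosing $r=r(\varepsilon,\lambda,\delta)$ with $Lr^{1-s}\leqslant 1$, a direct computation yields
\begin{equation*}
\varphi_{\wp_{\jmath}(\cdot)}(u_{\varepsilon,\lambda,\delta})\leqslant\frac{|\Omega|^{2}}{r^{N}}\,\frac{(1-\varepsilon)^{\wp^{-}_{\jmath}}}{\wp^{-}_{\jmath}}+\frac{C(\Omega,s)}{(\wp^{-}_{\jmath})^{2}}\longrightarrow 0\quad\text{as }\jmath\to\infty,
\end{equation*}
for each fixed $(\varepsilon,\lambda,\delta)$; a diagonal extraction over $\varepsilon\to 0$, $\lambda\to 1$, $\delta\to 0$ then produces the required recovery sequence. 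This establishes the Mosco convergence.

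It remains to run the stability argument. Testing \eqref{aproximado} with $u_{\wp_{\jmath}(\cdot)}$ gives uniform boundedness in $\mathscr{C}([0,T];L^{2}(\Omega))$ and $\int_{0}^{T}\varphi_{\wp_{\jmath}(\cdot)}(u_{\wp_{\jmath}(\cdot)})\,dt\leqslant C$, and testing with $t\,\tfrac{d}{dt}u_{\wp_{\jmath}(\cdot)}$ gives $\sqrt{t}\,\tfrac{d}{dt}u_{\wp_{\jmath}(\cdot)}$ bounded in $L^{2}(0,T;L^{2}(\Omega))$ and $\varphi_{\wp_{\jmath}(\cdot)}(u_{\wp_{\jmath}(\cdot)}(t))\leqslant C/t$. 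Let $v_{\jmath}$ solve the same equation with right-hand side $f$ and initial datum a recovery sequence for $u_{0}$; by the $L^{2}$-contraction property $\|u_{\wp_{\jmath}(\cdot)}-v_{\jmath}\|_{\mathscr{C}([0,T];L^{2}(\Omega))}\to 0$, so it suffices to analyse $v_{\jmath}$, which has bounded energy on all of $[0,T]$, hence $\tfrac{d}{dt}v_{\jmath}$ bounded in $L^{2}(0,T;L^{2}(\Omega))$ and $v_{\jmath}$ bounded in $L^{\infty}(0,T;W^{\sigma,2}(\Omega))$; an Aubin--Lions--Simon argument gives a subsequence with $v_{\jmath}\to v$ in $\mathscr{C}([0,T];L^{2}(\Omega))$ and $\tfrac{d}{dt}v_{\jmath}\rightharpoonup\tfrac{d}{dt}v$ in $L^{2}(0,T;L^{2}(\Omega))$. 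Passing to the limit in the subdifferential inequality $\varphi_{\wp_{\jmath}(\cdot)}(z_{\jmath})\geqslant\varphi_{\wp_{\jmath}(\cdot)}(v_{\jmath}(t))+(f-\tfrac{d}{dt}v_{\jmath}(t),z_{\jmath}-v_{\jmath}(t))_{L^{2}}$, tested against a recovery sequence $z_{\jmath}\to z\in\mathbb{K}_{\infty}$ and an arbitrary nonnegative time weight, and invoking the $\liminf$-inequality, identifies $v$ as \emph{the} (unique, by monotonicity) solution of the limit problem with datum $u_{0}$; hence the whole sequence converges, $\mathfrak{u}_{\infty}:=v=\lim_{\jmath}u_{\wp_{\jmath}(\cdot)}$ satisfies \eqref{cs4}, the stated variational inequality, $\mathfrak{u}_{\infty}(t)\in\mathbb{K}_{\infty}$ for a.a.\ $t$, $\mathfrak{u}_{\infty}(\cdot,0)=\mathfrak{u}_{\infty}^{0}$, and $\mathfrak{u}_{\infty}\in\mathscr{C}([0,T];L^{2}(\Omega))\cap W^{1,2}_{\mathrm{loc}}((0,T];L^{2}(\Omega))$. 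Finally \eqref{cs5} and \eqref{11} follow from energy identities: comparing
\begin{equation*}
\int_{0}^{T}t\Big\|\tfrac{d}{dt}u_{\wp_{\jmath}(\cdot)}\Big\|_{L^{2}}^{2}dt=\int_{0}^{T}t\big(f_{\wp_{\jmath}(\cdot)},\tfrac{d}{dt}u_{\wp_{\jmath}(\cdot)}\big)_{L^{2}}dt-T\varphi_{\wp_{\jmath}(\cdot)}(u_{\wp_{\jmath}(\cdot)}(T))+\int_{0}^{T}\varphi_{\wp_{\jmath}(\cdot)}(u_{\wp_{\jmath}(\cdot)})\,dt
\end{equation*}
with the analogous identity for $\mathfrak{u}_{\infty}$ (whose $\partial I_{\mathbb{K}_{\infty}}$-term vanishes since $I_{\mathbb{K}_{\infty}}\circ\mathfrak{u}_{\infty}\equiv 0$), using the $\liminf$-inequality to bound $\limsup_{\jmath}\big(-T\varphi_{\wp_{\jmath}(\cdot)}(u_{\wp_{\jmath}(\cdot)}(T))\big)\leqslant 0$ and the convexity comparison with $v_{\jmath}$ to show $\int_{0}^{T}t\,\varphi_{\wp_{\jmath}(\cdot)}(u_{\wp_{\jmath}(\cdot)})\,dt\to 0$, one gets $\limsup_{\jmath}\int_{0}^{T}t\|\tfrac{d}{dt}u_{\wp_{\jmath}(\cdot)}\|_{L^{2}}^{2}dt\leqslant\int_{0}^{T}t\|\tfrac{d}{dt}\mathfrak{u}_{\infty}\|_{L^{2}}^{2}dt$, which with the weak convergence yields \eqref{cs5}; under the additional hypothesis $\varphi_{\wp_{\jmath}(\cdot)}(u_{\wp_{\jmath}(\cdot)}^{0})\to 0$ the same reasoning runs without the weight $t$ and gives \eqref{11}.
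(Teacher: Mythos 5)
Your overall strategy — identify $\partial\varphi_{\wp_{\jmath}(\cdot)}=(-\Delta)^{s}_{\wp_{\jmath}(\cdot)}$, prove that $\varphi_{\wp_{\jmath}(\cdot)}\to I_{\mathbb{K}_{\infty}}$ in the Mosco sense on $L^{2}(\Omega)$, and invoke Attouch's stability theorem (Proposition~\ref{p1}) — is exactly the route the paper follows (Proposition~\ref{p2} followed by the short conclusion of Theorem~\ref{t1}). The difference lies in how you try to establish the Mosco convergence, and there you overlook the defining constraint $u\in W^{s,2}_{0}(\Omega)$ inside $\mathbb{K}_{\infty}$. This causes two problems. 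First, your claim that ``the naive choice $u_{\jmath}\equiv u$ fails'' is wrong: since $u\in W^{s,2}_{0}(\Omega)$, the quantity $\int_{\Omega^{2}}|\mathcal{G}_{s}u|^{2}\,d\mu$ is finite, and for $\wp_{\jmath}^{-}\geqslant 2$ one has pointwise $|\mathcal{G}_{s}u|^{\wp_{\jmath}(x,y)}\leqslant |\mathcal{G}_{s}u|^{2}$ thanks to $|\mathcal{G}_{s}u|\leqslant 1$, whence
\begin{equation*}
\varphi_{\wp_{\jmath}(\cdot)}(u)\leqslant\frac{1}{\wp_{\jmath}^{-}}\int_{\Omega^{2}}|\mathcal{G}_{s}u|^{2}\,d\mu\longrightarrow 0.
\end{equation*}
(The paper's terse justification ``$L^{\infty}(d\mu)\subset L^{\tau}(d\mu)$'' is itself inaccurate because $\mu$ is infinite; the correct reasoning is the interpolation above.) Your three-parameter approximation is therefore unnecessary, and the dilation step in it implicitly requires $\Omega$ to be star-shaped, a hypothesis the theorem does not impose.

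Second, and more seriously, the same oversight leaves a genuine gap in your liminf direction. Your Young-inequality estimate gives, for each \emph{fixed} $\sigma<s$, a bound on $[u_{\jmath}]_{W^{\sigma,2}(\Omega)}$ whose constant contains $\int_{\Omega^{2}}|x-y|^{2(s-\sigma)-N}\,dx\,dy$ and therefore blows up as $\sigma\to s$; this is enough for $L^{2}$-precompactness and the Egorov contradiction, which correctly shows $|\mathcal{G}_{s}u|\leqslant 1$ a.e., but it does \emph{not} show $u\in W^{s,2}_{0}(\Omega)$. A function can satisfy $|\mathcal{G}_{s}u|\leqslant 1$ a.e.\ and lie in every $W^{\sigma,2}$, $\sigma<s$, without belonging to $W^{s,2}$, so the membership $u\in\mathbb{K}_{\infty}$ is not established and the $\liminf$ inequality is not proved. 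The paper addresses this by extracting, from the modular bound $\Phi_{\jmath}(u_{\jmath})\leqslant\mathfrak{C}$, a uniform estimate on the Luxemburg norm of $\mathcal{G}_{(s,\wp_{\jmath}(\cdot,\cdot))}u_{\jmath}$ in $L^{\wp_{\jmath}(\cdot,\cdot)}(\Omega^{2})$ and then a H\"older-type descent to an $L^{\theta}(\Omega^{2})$ bound on the rescaled quotient for each fixed finite $\theta$ (equations \eqref{estgrad}--\eqref{g1}), which with $\theta=2$ is a $W^{s,2}(\Omega)$ bound on $u_{\jmath}$; passing to weak limits then yields $u\in W^{s,2}_{0}(\Omega)$. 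You would need to supply an analogous estimate before the conclusion can be drawn.
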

Now we state our sixth result.
\begin{theorem}\label{cs22} 
Let $( \wp_{\jmath}(\cdot,\cdot))_{\jmath\in \mathbb{N}}$ be the sequence of exponents satisfied the condition \eqref{sb1} such that \eqref{sb2}-\eqref{sb3}
hold. Furthermore, let  $(f_{\wp_{\jmath}(\cdot)} )_{\jmath\in \mathbb{N}} \subset L^{2} (0,T;L^{2}(\Omega))$ and $(u^{0}_{\wp_{\jmath}(\cdot)})_{\jmath\in \mathbb{N}} \subset L^{2}(\Omega)$ as in \eqref{aproximado} be such that \eqref{cs2}-\eqref{cs3} hold and $(u_{\wp_{\jmath}(\cdot)})_{\jmath\in \mathbb{N}}$ be the sequence of solutions of problem \eqref{aproximado}. Then there exists a function $\mathfrak{u}_{\infty} \in \mathscr{C}\left([0,T]; L^{2}(\Omega)\right) \bigcap W^{1,2}_{\mbox{loc}}\left((0,T]; L^{2}(\Omega)\right)$ such that \eqref{cs4}-\eqref{cs5} hold, and moreover, the limit $\mathfrak{u}_{\infty}$ satisfies

\begin{equation}\label{sb4}
\mathfrak{u}_{\infty}(t) \in W^{s,\kappa^{-}}_{0}(\Omega), \; \mathfrak{u}_{\infty}(t) \in  \mathcal{W}^{s,\kappa(\cdot,\cdot)}_{0} (\Omega \setminus \overline{\mathcal{O}}) \;\; \mbox{for a.a.} \;\; t \in (0,T),
\end{equation}
\begin{equation}\label{sb45}
\mathfrak{u}_{\infty}(\cdot,0)=\mathfrak{u}_{\infty}^{0} \mbox{   in   }\Omega
\end{equation}
and solves the following mixed problem for a.a. $t \in (0,T)$: a nonlocal diffusion equation on $\Omega \setminus \overline{\mathcal{O}}$ driven by the fractional $\kappa(\cdot)$-Laplacian,

\begin{equation}\label{sb5}
\partial_{t}\mathfrak{u}_{\infty}(\cdot,t) + (-\Delta)^{s}_{\kappa(\cdot)} \mathfrak{u}_{\infty}(\cdot,t) = f(\cdot,t) \;\; \mbox{in} \;\; \mathscr{D}'(\Omega \setminus \overline{\mathcal{O}})
\end{equation}
 in the sense of distribution and an evolutionary quasivariational inequality over $\mathcal{O},$

\begin{equation}\label{sb6}
\sup_{(x,y)\in \mathcal{O}^{2},\,\,x\neq y}|\mathcal{G}_{s}\mathfrak{u}_{\infty}(x,y)|\leqslant 1,
\end{equation}

\begin{equation}\label{sb7}
\int_{\mathcal{O}} (f(x,t) - \partial_{t}\mathfrak{u}_{\infty}(x,t)) (z(x) - \mathfrak{u}_{\infty}(x,t))\; dx \leqslant 0 \;\; \mbox{for all} \;\; z \in \mathbb{K}_{\infty,\mathcal{O}}(\mathfrak{u}_{\infty}(t)),
\end{equation}
where $\mathbb{K}_{\infty,\mathcal{O}}(\xi)$ is given for each $\xi \in W^{s,\kappa^{-}}_{0}(\Omega)$ by

\begin{equation*}
\mathbb{K}_{\infty,\mathcal{O}}(\xi) := \left\lbrace z \in W^{s,\infty}(\mathcal{O}) : z-\xi \in W^{s,\kappa^{-}}_{0}(\mathcal{O}) \;\; \mbox{and} \;\; \sup_{(x,y)\in \mathcal{O}^{2},\,\,x\neq y}|\mathcal{G}_{s}z(x,y)| \leqslant 1 \right\rbrace.
\end{equation*}
\noindent Besides, if

\begin{equation}\label{sb8}
\lim_{\jmath\to \infty}\int_{\Omega^{2}} \dfrac{1}{\wp_{\jmath}(x,y)} \dfrac{|u^{0}_{\wp_{\jmath}(\cdot)}(x)-u^{0}_{\wp_{\jmath}(\cdot)}(y)|^{\wp_{\jmath}(x,y)}}{|x-y|^{N+s\wp_{\jmath}(x,y)}}\; dxdy =
\int_{\Omega^{2} \setminus (\overline{\mathcal{O}^{2}})} \dfrac{1}{\kappa(x,y)} \dfrac{|u_{0}(x)-u_{0}(y)|^{\kappa(x,y)}}{|x-y|^{N+s\kappa(x,y)}}\; dxdy,
\end{equation}
then \eqref{11} holds, that is,
\begin{equation*}
u_{\wp_{\jmath}(\cdot)} \to \mathfrak{u}_{\infty} \;\; \mbox{strongly in} \;\; W^{1,2}(0,T; L^{2}(\Omega))
\end{equation*}
and

\begin{equation}\label{sb9}
u_{\wp_{\jmath}(\cdot)} \to \mathfrak{u}_{\infty} \;\; \mbox{strongly in} \;\; L^{\sigma}(0,T;\mathcal{W}^{s,\kappa(\cdot,\cdot)}_{0}(\Omega \setminus \overline{\mathcal{O}})) \;\; \mbox{for each} \;\; \sigma \in [1,\infty).
\end{equation}
\end{theorem}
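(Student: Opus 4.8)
The plan is to treat Theorem~\ref{cs22} as a localized refinement of Theorem~\ref{t1}, splitting the analysis of the limit between the ``frozen'' region $\mathcal{O}^{2}$, where the exponents blow up, and the ``regular'' region $(\Omega^{2})\setminus\overline{\mathcal{O}^{2}}$, where they stay bounded. First I would record the a~priori estimates available uniformly in $\jmath$: testing \eqref{aproximado} with $u_{\wp_{\jmath}(\cdot)}$ and with $\partial_{t}u_{\wp_{\jmath}(\cdot)}$ (times $t$, to exploit the smoothing) gives, by the hypotheses \eqref{cs2}--\eqref{cs3} and \eqref{sb2}--\eqref{sb3}, that $(u_{\wp_{\jmath}(\cdot)})$ is bounded in $\mathscr{C}([0,T];L^{2}(\Omega))$, that $(\sqrt{t}\,\partial_{t}u_{\wp_{\jmath}(\cdot)})$ is bounded in $L^{2}(0,T;L^{2}(\Omega))$, and that the energies $\jmath\mapsto\frac{1}{\wp_{\jmath}(x,y)}\int_{\Omega^{2}}|\mathcal{G}_{(s,\wp_{\jmath})}u_{\wp_{\jmath}(\cdot)}|^{\wp_{\jmath}}\,d\mu$ are bounded. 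These are precisely the ingredients used in Theorem~\ref{t1}, so by that theorem (applied with the same sequence) one already obtains the limit function $\mathfrak{u}_{\infty}\in\mathscr{C}([0,T];L^{2}(\Omega))\cap W^{1,2}_{\mathrm{loc}}((0,T];L^{2}(\Omega))$, the convergences \eqref{cs4}--\eqref{cs5}, the initial condition \eqref{sb45}, and the global constraint $\mathfrak{u}_{\infty}(t)\in\mathbb{K}_{\infty}$ — in particular the inequality \eqref{sb6} holds even on all of $\Omega^{2}$.

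Next I would upgrade the regularity on the regular region. On $(\Omega^{2})\setminus\overline{\mathcal{O}^{2}}$ the exponent sequence $\kappa_{\jmath}=\wp_{\jmath}$ is eventually bounded below by $\kappa^{-}>1$ and above by $\kappa^{+}<\infty$; a standard interpolation/monotonicity argument in the Orlicz--type modular shows that the restriction of $\mathcal{G}_{(s,\kappa(\cdot))}u_{\wp_{\jmath}(\cdot)}$ is bounded in $L^{\kappa(\cdot,\cdot)}$ over $(\Omega^{2})\setminus\overline{\mathcal{O}^{2}}$ with the measure $d\mu$, hence, up to a further subsequence, $u_{\wp_{\jmath}(\cdot)}\rightharpoonup\mathfrak{u}_{\infty}$ in $\mathcal{W}^{s,\kappa(\cdot,\cdot)}_{0}(\Omega\setminus\overline{\mathcal{O}})$; this together with $\mathfrak{u}_{\infty}(t)\in\mathbb{K}_{\infty}\subset W^{s,\kappa^{-}}_{0}(\Omega)$ yields \eqref{sb4}. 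To pass to the limit in the equation restricted to $\Omega\setminus\overline{\mathcal{O}}$, I would fix a test function $\varphi\in\mathscr{C}^{\infty}_{c}(\Omega\setminus\overline{\mathcal{O}})$, write the weak formulation of \eqref{aproximado}, use \eqref{cs5} and \eqref{cs2} to handle the time-derivative and source terms, and apply a Minty--Browder monotonicity argument to identify the nonlinear flux: the crucial point is that for $\varphi$ supported away from $\mathcal{O}$ the double integral decomposes into the piece over $(\Omega\setminus\overline{\mathcal{O}})^{2}$, which converges by the weak convergence in $\mathcal{W}^{s,\kappa(\cdot,\cdot)}_{0}$, plus a cross term over $(\Omega\setminus\overline{\mathcal{O}})\times\mathcal{O}$ which, because $|x-y|$ stays bounded away from $0$ there, is controlled purely by the $L^{2}$ convergence \eqref{cs4}. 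This gives \eqref{sb5} in $\mathscr{D}'(\Omega\setminus\overline{\mathcal{O}})$.

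The harder half is the quasivariational inequality \eqref{sb7} over $\mathcal{O}$ and the description of the constraint set $\mathbb{K}_{\infty,\mathcal{O}}(\mathfrak{u}_{\infty}(t))$. Here the difficulty is that the admissible competitors $z$ must match $\mathfrak{u}_{\infty}(t)$ on the ``boundary'' $\partial\mathcal{O}$ (encoded by $z-\mathfrak{u}_{\infty}(t)\in W^{s,\kappa^{-}}_{0}(\mathcal{O})$), so the constraint genuinely depends on the unknown — this is the quasivariational, as opposed to variational, feature. The plan is: given $z\in\mathbb{K}_{\infty,\mathcal{O}}(\mathfrak{u}_{\infty}(t))$, extend it by $\mathfrak{u}_{\infty}(t)$ outside $\mathcal{O}$ to get $\tilde z\in W^{s,2}_{0}(\Omega)$ and then, for each $\jmath$, build a recovery competitor $z_{\jmath}$ by a $\mathrm{Mosco}$-type construction (truncating/mollifying so that $z_{\jmath}\in\mathcal{W}^{s,\wp_{\jmath}(\cdot,\cdot)}_{0}(\Omega)$, $z_{\jmath}\to\tilde z$ in $L^{2}$, and $\limsup_{\jmath}\frac{1}{\kappa_{\jmath}^{-}}\int_{\mathcal{O}^{2}}|\mathcal{G}_{(s,\kappa_{\jmath})}z_{\jmath}|^{\kappa_{\jmath}}\,d\mu\le 0$, which holds because $|\mathcal{G}_{s}z|\le1$ on $\mathcal{O}^{2}$ and $\kappa_{\jmath}^{-}\to\infty$). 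Testing \eqref{aproximado} with $z_{\jmath}-u_{\wp_{\jmath}(\cdot)}$, using convexity of the modular to drop the nonlinear term with the favorable sign (the modular at $u_{\wp_{\jmath}(\cdot)}$ is nonnegative and the modular at $z_{\jmath}$ vanishes in the limit), restricting the resulting integral inequality to $\mathcal{O}$ via the vanishing of the cross-terms as above, and finally passing to the limit with \eqref{cs4}--\eqref{cs5} and lower semicontinuity, produces \eqref{sb7}; the membership $\mathfrak{u}_{\infty}(t)\in\mathbb{K}_{\infty,\mathcal{O}}(\mathfrak{u}_{\infty}(t))$ is immediate from \eqref{sb4} and \eqref{sb6}. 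Finally, the strong convergences: under the extra energy-convergence hypothesis \eqref{sb8}, the argument of Theorem~\ref{t1} upgrades \eqref{cs5} to \eqref{11} (strong $W^{1,2}(0,T;L^{2})$ convergence) by a uniform-convexity/Br{\'e}zis argument on the monotone term; and \eqref{sb8} forces the modular on the regular region to converge to the limiting modular, which by the uniform convexity of the variable-exponent modular on $(\Omega^{2})\setminus\overline{\mathcal{O}^{2}}$ gives strong convergence of $\mathcal{G}_{(s,\kappa(\cdot))}u_{\wp_{\jmath}(\cdot)}$ in $L^{\kappa(\cdot,\cdot)}$, hence \eqref{sb9} for every $\sigma\in[1,\infty)$ by dominated convergence in $t$ (the $t$-integrability is free since the bounds are uniform in $t$). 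I expect the construction of the recovery sequence $z_{\jmath}$ respecting simultaneously the discontinuous-exponent space $\mathcal{W}^{s,\wp_{\jmath}(\cdot,\cdot)}_{0}(\Omega)$, the boundary-matching condition on $\partial\mathcal{O}$, and the energy bound to be the main technical obstacle — this is exactly the ``nonstandard'' Mosco-type step, since the $W^{s,\kappa^{-}}_{0}(\mathcal{O})$-closeness of $z-\xi$ must be preserved along the approximation while the exponent itself is varying and blowing up on $\mathcal{O}^{2}$.
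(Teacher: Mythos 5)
Your opening step is a genuine error, and it propagates. You invoke Theorem~\ref{t1} to produce $\mathfrak{u}_{\infty}$, the convergences, and ``the global constraint $\mathfrak{u}_{\infty}(t)\in\mathbb{K}_{\infty}$ --- in particular \eqref{sb6} holds even on all of $\Omega^{2}$.'' But Theorem~\ref{t1} is stated under hypothesis~\eqref{cs1}, which requires $\wp^{-}_{\jmath}\to\infty$ with $\wp^{-}_{\jmath}=\min_{\overline{\Omega^{2}}}\wp_{\jmath}$. In the setting of Theorem~\ref{cs22} one has $\wp_{\jmath}\equiv\kappa<\infty$ on $(\Omega^{2})\setminus\overline{\mathcal{O}^{2}}$, so $\wp^{-}_{\jmath}$ stays bounded by $\kappa^{+}$; condition~\eqref{cs1} fails and Theorem~\ref{t1} does not apply. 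More seriously, the conclusion you draw is false and inconsistent with the theorem you are trying to prove: if $|\mathcal{G}_{s}\mathfrak{u}_{\infty}|\leqslant 1$ held on all of $\Omega^{2}$, then $\mathfrak{u}_{\infty}$ would be both a solution of the fractional $\kappa(\cdot)$-Laplacian equation~\eqref{sb5} on $\Omega\setminus\overline{\mathcal{O}}$ and globally $s$-Lipschitz, which is an overdetermination with no reason to hold. The whole point of the partial limit is that the gradient constraint is active only on $\mathcal{O}^{2}$, while the $\kappa(\cdot)$-modular survives on the complement.

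What is actually required (and what the paper does in Proposition~\ref{p3}) is a fresh Mosco convergence result for the present exponent sequence: $\Phi_{\jmath}\to\Phi_{\mathcal{O}}$ on $L^{2}(\Omega)$, where $\Phi_{\mathcal{O}}$ is the mixed functional that equals the $\kappa(\cdot,\cdot)$-modular on $\Omega^{2}\setminus\overline{\mathcal{O}^{2}}$ under the constraints $\xi\in W^{s,\kappa^{-}}_{0}(\Omega)$, $\xi\in\mathcal{W}^{s,\kappa(\cdot,\cdot)}_{0}(\Omega\setminus\overline{\mathcal{O}})$, $\sup_{\mathcal{O}^{2}}|\mathcal{G}_{s}\xi|\leqslant1$, and is $+\infty$ otherwise. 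The recovery sequence in part \textbf{(i)} of Definition~\ref{M} can be taken constant, $u_{\wp_{\jmath}(\cdot)}\equiv u$: the $\mathcal{O}^{2}$ piece of the modular is $O(1/\kappa_{\jmath}^{-})$ thanks to the pointwise bound $|\mathcal{G}_{s}u|\leqslant1$ there, while the complementary piece is unchanged. The liminf inequality \textbf{(ii)} is obtained by splitting the modular bound into its $\mathcal{O}^{2}$ and complementary parts and reusing, on $\mathcal{O}^{2}$, the argument from the proof of Proposition~\ref{p2}. Once Mosco convergence is in hand, Proposition~\ref{p1} (Attouch) yields the abstract evolution inclusion $\frac{d\mathfrak{u}_{\infty}}{dt}+\partial\Phi_{\mathcal{O}}(\mathfrak{u}_{\infty})\ni f$, and \eqref{sb5}, \eqref{sb7} follow by choosing, in the subdifferential inequality~\eqref{6.15}, test functions $v$ that agree with $\xi$ on $\mathcal{O}$ and equal $\xi+\mathfrak{h}\Theta$ on $\Omega\setminus\overline{\mathcal{O}}$ (giving the PDE), and test functions $v$ that equal $z\in\mathbb{K}_{\infty,\mathcal{O}}(\xi)$ on $\mathcal{O}$ and $\xi$ outside (giving the quasivariational inequality). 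Your Minty--Browder/recovery-sequence passage against the weak formulation is not needed and would be substantially more delicate to carry out rigorously (cross terms, discontinuous exponent); the subdifferential route is cleaner. Your outline for the strong convergences under~\eqref{sb8} (energy convergence, then uniform convexity of the variable-exponent modular on the regular region, then integration in $t$) is in line with the paper's argument, but it must be rerun starting from the correct Mosco limit $\Phi_{\mathcal{O}}$, not from the incorrect $\Phi_{\infty}$ of Theorem~\ref{t1}.
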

\section{Organization of the Paper}\quad
\noindent\noindent The paper is organized as follows: In Section \ref{preliminaries} we provide the necessary definitions and basic results concerning Fractional Sobolev spaces with variable exponents.    In Section \ref{para} we  deals with the  evolution equation associated the Cauchy abstract problem and  the abstract periodic problem and Mosco convergence. In Section \ref{unicidade} we recall an result of existence solution for problem \eqref{IP}. In Section \ref{prove1} we prove our main results, Theorem \ref{teo 3.4}, Theorem \ref{teo 3.5}, Theorem \ref{teo 3.8}, Theorem \ref{teo 3.9}, Theorem \ref{t1} and Theorem \ref{cs22}.

\section{Preliminary Results}\label{preliminaries}\quad
\noindent \noindent  In this section we collect the necessary information about Sobolev's fractional spaces with variable exponents also called Aronszajn-Gagliardo-Slobodecki spaces. For more details we refer to \cite{lauren,cruz,demengel,diening,valdinoci,elardjh} and the references
cited therein.
\subsection{Nonlocal function spaces}\quad
\noindent\noindent For $s\in(0,1)$ and $\vartheta\in (1,\infty),$ we denote by
$$W^{s,\vartheta}(\Omega):=\Bigg\{u\in L^{\vartheta}(\Omega):\int_{\Omega^{2}}\frac{|u(x)-u(y)|^{\vartheta}}{|x-y|^{N+s\vartheta}}\,dx\,dy<\infty\Bigg\},$$
the fractional Sobolev space endowed with the norm
$$\|u\|_{W^{s,\vartheta}(\Omega)}:=\Bigg(\int_{\Omega}|u|^{\vartheta}\,dx+ \int_{\Omega^{2}}\frac{|u(x)-u(y)|^{\vartheta}}{|x-y|^{N+s\vartheta}}\,dx\,dy  \Bigg)^{\frac{1}{\vartheta}}.$$

Let
 $$W_{0}^{s,\vartheta}(\Omega)=\overline{\mathscr{C}_{0}^{\infty}(\Omega)}^{W^{s,\vartheta}(\Omega)}.$$

For $s\in (0,1),$ $1<\vartheta<\infty$ we have the following Sobolev-Slobodetskii embedding:
\begin{equation*}
 \begin{cases}
 W_{0}^{s,\vartheta}(\Omega)\hookrightarrow L^{\vartheta^{*}}(\Omega),\,\, \vartheta^{*}:=\displaystyle\frac{N\vartheta}{N-s\vartheta}>\vartheta \quad \text{ if } N>s\vartheta, \\
W_{0}^{s,\vartheta}(\Omega)\hookrightarrow L^{\eta}(\Omega) \quad \text{ for all } \eta\in [1,\infty) \text{ if } N=s\vartheta,\\
W_{0}^{s,\vartheta}(\Omega)\hookrightarrow \mathscr{C}^{0,s-\frac{N}{\vartheta}}(\overline{\Omega})\quad \text{ if } N<s\vartheta.
 \end{cases}
 \end{equation*}

\subsection{Nonlocal function spaces with variable exponent}\quad
\noindent\noindent We consider two variable exponents $\eta:\overline{\Omega}\to (1,\infty)$ and $\vartheta:\overline{\Omega}^{2}\to (1,\infty),$ where both $\eta(\cdot)$ and $\vartheta(\cdot,\cdot)$ are continuous functions. We assume that $\vartheta$ is symmetric, that is, $\vartheta(x,y) = \vartheta(y,x)$, and that both $\vartheta$ and $\eta$ are bounded, that is, 

\begin{equation}\label{eq1}
    1<\vartheta^{-}=\min_{(x,y)\in \overline{\Omega^{2}} }\vartheta(x,y)\leqslant \vartheta(x,y)\leqslant \vartheta^{+}=\max_{(x,y)\in \overline{\Omega^{2}}}\vartheta(x,y)<\infty
\end{equation}
and 
\begin{equation}\label{eq2}
    1<\eta^{-}=\min_{x\in \overline{\Omega}}\eta(x)\leqslant \eta(x)\leqslant \eta^{+}=\max_{x\in \overline{\Omega}}\eta(x)<\infty. 
\end{equation}

We define the Banach space $L^{\eta(\cdot)}(\Omega)$ as usual,
\begin{equation*}
 L^{\eta(\cdot)}(\Omega):= \left\{u:\Omega\rightarrow \mathbb{R}:\exists\,\mu>0,\,\text{s.t.} \int_{\Omega} \left|\frac{u(x)}{\mu}\right|^{\eta(x)}\,dx<\infty \right\},
\end{equation*}
with its natural norm
\begin{equation*}
  \| u\|_{ L^{\eta(\cdot)}(\Omega)} := \inf\left\{\mu>0: \int_{\Omega} \left|\frac{u(x)}{\mu}\right|^{\eta(x)}dx\leqslant 1 \right\}.
\end{equation*}
\begin{lemma}\label{holder} The space $L^{\vartheta(\cdot)}(\Omega)$ is a separable, uniformly convex Banach space, and its dual space is $L^{\vartheta'(\cdot)}(\Omega)$ where $\frac{1}{\vartheta(x)}+\frac{1}{\vartheta'(x)}=1.$ For any $u\in L^{\vartheta(\cdot)}(\Omega)$ and $v\in L^{\vartheta'(\cdot)}(\Omega),$ we have
$$\displaystyle\Bigg|\int_{\Omega}uv\,dx \Bigg|\leqslant \Big(\frac{1}{\vartheta^{-}}+\frac{1}{\vartheta'^{-}} \Big)\|u\|_{\vartheta(\cdot)}\|v\|_{\vartheta'(\cdot)}.$$ 
\end{lemma}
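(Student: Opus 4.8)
The plan is to handle the four assertions in order, reducing the Banach-space structure, separability, uniform convexity, and the duality identification to the now-classical theory of variable exponent Lebesgue spaces (Kov\'a\v{c}ik--R\'akosn\'ik, Diening--Harjulehto--H\"ast\"o--R\r{u}\v{z}i\v{c}ka, and the references already cited in this section), while proving the H\"older-type inequality directly from pointwise Young's inequality. The single technical tool that makes everything run is the equivalence between the \emph{modular}
\[
\rho_{\vartheta(\cdot)}(u):=\int_{\Omega}|u(x)|^{\vartheta(x)}\,dx
\]
and the Luxemburg norm: since $1<\vartheta^{-}\le \vartheta(x)\le \vartheta^{+}<\infty$, the map $t\mapsto t^{\vartheta(x)}$ is convex and increasing, so $\rho_{\vartheta(\cdot)}$ is a convex modular, $\|\cdot\|_{L^{\vartheta(\cdot)}(\Omega)}$ is a genuine norm, the unit-ball characterization $\rho_{\vartheta(\cdot)}\big(u/\|u\|_{\vartheta(\cdot)}\big)\le 1$ holds, and one has the two-sided estimates $\min\{\rho^{1/\vartheta^{-}},\rho^{1/\vartheta^{+}}\}\le\|u\|_{\vartheta(\cdot)}\le\max\{\rho^{1/\vartheta^{-}},\rho^{1/\vartheta^{+}}\}$. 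In particular $\|u_n\|_{\vartheta(\cdot)}\to 0$ iff $\rho_{\vartheta(\cdot)}(u_n)\to 0$; I would establish these first since every subsequent step uses them.

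For completeness, take a Cauchy sequence $(u_n)$ in $L^{\vartheta(\cdot)}(\Omega)$, pass to a subsequence with $\|u_{n_{k+1}}-u_{n_k}\|_{\vartheta(\cdot)}\le 2^{-k}$, so that $\sum_k|u_{n_{k+1}}-u_{n_k}|$ converges in norm; the modular bounds plus Fatou's lemma (applied to $\rho_{\vartheta(\cdot)}$) give an a.e.\ limit $u\in L^{\vartheta(\cdot)}(\Omega)$ with $u_{n_k}\to u$ in norm, and the Cauchy property upgrades this to $u_n\to u$. Separability follows because $\Omega$ is a bounded domain: truncation together with approximation by simple functions on finite unions of dyadic cubes with rational values is norm-dense (convert modular approximation to norm approximation via the bounds above), and this is a countable family; alternatively one notes $\mathscr{C}_0^\infty(\Omega)$ is dense since $\vartheta^{+}<\infty$. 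Uniform convexity is the one genuinely non-elementary point: for $1<\vartheta^{-}\le\vartheta^{+}<\infty$ the modular $\rho_{\vartheta(\cdot)}$ satisfies a Clarkson/Milman-type uniform convexity inequality, and one transfers this to norm uniform convexity using the modular--norm equivalence; I would cite Diening et al.\ (resp.\ Fan--Zhao) for the modular inequality and perform the translation. (Via Milman--Pettis this also yields the reflexivity used elsewhere.)

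For the dual space, the map $v\mapsto\big(u\mapsto\int_{\Omega}uv\,dx\big)$ is, by the H\"older inequality below, a bounded injection $L^{\vartheta'(\cdot)}(\Omega)\hookrightarrow(L^{\vartheta(\cdot)}(\Omega))'$. Surjectivity is the classical argument: given $F\in(L^{\vartheta(\cdot)}(\Omega))'$, the set function $E\mapsto F(\chi_E)$ is $\sigma$-additive and absolutely continuous, so by Radon--Nikodym $F(u)=\int_\Omega u\,g\,dx$ for a measurable $g$; testing $F$ against the truncations $\operatorname{sgn}(g)\min\{|g|,n\}\chi_{\Omega_n}$ (with $\Omega_n\uparrow\Omega$ of finite measure) and using the modular--norm bounds produces a uniform bound on $\|\,\cdot\,\|_{\vartheta'(\cdot)}$ of these truncations, whence $g\in L^{\vartheta'(\cdot)}(\Omega)$ by Fatou. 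This is standard for $\vartheta^{+}<\infty$ and I would state it with a reference to Kov\'a\v{c}ik--R\'akosn\'ik rather than reproduce the details.

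Finally, the H\"older-type inequality I would prove in full. If $\|u\|_{\vartheta(\cdot)}=0$ or $\|v\|_{\vartheta'(\cdot)}=0$ the statement is trivial; otherwise put $\widetilde u:=u/\|u\|_{\vartheta(\cdot)}$ and $\widetilde v:=v/\|v\|_{\vartheta'(\cdot)}$. Young's inequality with the conjugate pair $\vartheta(x),\vartheta'(x)$ gives, pointwise,
\[
|\widetilde u(x)\widetilde v(x)|\le\frac{1}{\vartheta(x)}|\widetilde u(x)|^{\vartheta(x)}+\frac{1}{\vartheta'(x)}|\widetilde v(x)|^{\vartheta'(x)}\le\frac{1}{\vartheta^{-}}|\widetilde u(x)|^{\vartheta(x)}+\frac{1}{\vartheta'^{-}}|\widetilde v(x)|^{\vartheta'(x)},
\]
and integrating over $\Omega$ while using $\rho_{\vartheta(\cdot)}(\widetilde u)\le1$ and $\rho_{\vartheta'(\cdot)}(\widetilde v)\le1$ yields $\int_\Omega|\widetilde u\widetilde v|\,dx\le \tfrac{1}{\vartheta^{-}}+\tfrac{1}{\vartheta'^{-}}$; multiplying back by $\|u\|_{\vartheta(\cdot)}\|v\|_{\vartheta'(\cdot)}$ gives the asserted inequality. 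The main obstacles are thus the uniform convexity (the Clarkson-type modular inequality uniform over $[\vartheta^{-},\vartheta^{+}]$ and its passage to the norm) and the surjectivity half of the duality; the completeness, separability, and H\"older inequality are routine once the modular--norm equivalence is in hand.
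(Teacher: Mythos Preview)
Your proposal is correct and complete. However, the paper does not actually prove this lemma: it is stated in the preliminary section as a known result, with only a blanket reference at the start of Section~\ref{preliminaries} to the standard sources on variable exponent spaces (Diening--Harjulehto--H\"ast\"o--R\r{u}\v{z}i\v{c}ka, Cruz-Uribe--Fiorenza, etc.). There is no proof to compare against; the paper simply records the statement and uses it later.

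Your write-up therefore supplies strictly more than the paper does. The H\"older inequality argument via pointwise Young and the unit-ball characterization of the Luxemburg norm is exactly the standard one and is fully rigorous as written. Your treatment of completeness and separability is routine and correct. For uniform convexity and the surjectivity half of the duality you rightly flag these as the substantive points and defer to the literature; this is appropriate, and in fact is all the paper itself does (implicitly). If you wanted to match the paper's level of detail, a single sentence citing \cite{diening} or Kov\'a\v{c}ik--R\'akosn\'ik would suffice for the entire lemma.
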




Now for $s\in (0,1)$ we define the fractional Sobolev space with variable exponents via the Gagliardo-Slobodetskii approach as follows:

\begin{equation*}
\begin{split}
    \mathcal{W}&=\mathcal{W}^{s,\eta(\cdot),\vartheta(\cdot,\cdot)}(\Omega) \\
    &=\left\{ u\in L^{\eta(\cdot)}(\Omega):
 \int_{\Omega^{2}} \frac{|u(x)-u(y)|^{\vartheta(x,y)}}{\mu^{\vartheta(x,y)}|x-y|^{n+s\vartheta(x,y)}}\,dx\,dy<\infty , \mbox{ for some } \mu >0 \right\}
\end{split}
\end{equation*}
and we set
\begin{equation*}
[u]_{\Omega}^{s,\vartheta(\cdot,\cdot)}:= \inf\left\{ \mu >0:\int_{\Omega^{2}} \frac{|u(x)-u(y)|^{\vartheta(x,y)}}{\mu^{\vartheta(x,y)}|x-y|^{n+s\vartheta(x,y)}}\,dx\,dy\leqslant 1 \right\},
\end{equation*}
\quad\\
the variable exponent Gagliardo-Slobodetskii seminorm. Also, notice that (see for instance \cite{kaufmann}) $\mathcal{W}$ is a Banach space equipped with
the norm
$$ \|u\|_{\mathcal{W}}:=\|u\|_{L^{\eta(\cdot)}(\Omega)}+[u]_{\Omega}^{s,\vartheta(\cdot,\cdot)}.$$

The space $\mathcal{W}_{0}^{s,\eta(\cdot),\vartheta(\cdot,\cdot)}$ denotes the closure of $ \mathscr{C}_{0}^{\infty}(\Omega)$ in $\mathcal{W}.$ Then $\mathcal{W}_{0}^{s,\eta(\cdot),\vartheta(\cdot,\cdot)}$ is a Banach space with the norm
$$\|u\|=[u]^{s,\vartheta(\cdot,\cdot)}_{\Omega}.$$

\begin{theorem}\label{p111}
Let $\Omega\subset\mathbb{R}^{N}$ be bounded domain with smooth boundary and $s \in (0, 1)$. Let $\eta(x)$,
$\vartheta(x, y)$ be continuous variable exponents with $s\,\vartheta(x, y) < N$ for $(x, y)\in \overline{\Omega^{2}}$ and $\eta(x) > \vartheta(x, x)$ for $x\in\overline{\Omega}.$  Then, the embedding $\mathcal{W}^{s,\eta(\cdot),\vartheta(\cdot,\cdot)}(\Omega)\hookrightarrow      L^{\mathrm{r}(\cdot)}(\Omega)$ is continuous and compact for any $\mathrm{r}\in \mathscr{C}(\overline{\Omega}) $ for all $x\in \Omega,$ $\mathrm{r} \in (1, \vartheta^{\star}(x)),$ where
\begin{equation*}
\vartheta^{\star}(x):=\frac{N\vartheta(x,x)}{N-s\vartheta(x,x)}>\mathrm{r}(x)\geqslant \mathrm{r}^{-}>1, \mbox{ for } x\in \overline{\Omega}.
\end{equation*}
 Furthermore, there exists a constant $\mathcal{S} = \mathcal{S}(N, s, \vartheta, \eta, \mathrm{r},\Omega)$ such that for all $u \in \mathcal{W}$,
\begin{equation*}
      \|u\|_{L^{\mathrm{r}(\cdot)}(\Omega)} \leqslant \mathcal{S}\|u\|_{\mathcal{W}}.
\end{equation*}




\end{theorem}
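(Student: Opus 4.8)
The plan is a localization argument: cover $\overline{\Omega}$ by finitely many small balls on each of which the variable exponents are squeezed between constants, reduce on each piece to the classical constant-exponent fractional Sobolev compact embedding, and then reassemble by a finite-cover/diagonal argument.

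\emph{Step 1 (localization and choice of constants).} First I would fix $x_{0}\in\overline{\Omega}$. Using continuity of $\eta,\vartheta,\mathrm r$, the finiteness $\vartheta^{\star}(x_{0})<\infty$ (i.e.\ $s\vartheta(x_{0},x_{0})<N$), the strict inequality $\mathrm r(x_{0})<\vartheta^{\star}(x_{0})$, and $\eta(x_{0})>\vartheta(x_{0},x_{0})$, I would choose $\varrho\in(0,\tfrac12)$ so small that, writing $\omega:=B_{\varrho}(x_{0})\cap\Omega$, there exist constants $r_{0}:=\max_{\overline{\omega}}\mathrm r$ and $p_{0}$ with
\[
\frac{Nr_{0}}{N+sr_{0}}<p_{0}\le\min\Big\{\min_{\overline{\omega}}\eta,\ \min_{\overline{\omega}\times\overline{\omega}}\vartheta\Big\},\qquad sp_{0}<N.
\]
This is possible because, as $\varrho\to0$, the left end tends to $\tfrac{N\mathrm r(x_{0})}{N+s\mathrm r(x_{0})}$, the right end tends to $\min\{\eta(x_{0}),\vartheta(x_{0},x_{0})\}=\vartheta(x_{0},x_{0})$, and $\tfrac{N\mathrm r(x_{0})}{N+s\mathrm r(x_{0})}<\vartheta(x_{0},x_{0})$ is exactly $\mathrm r(x_{0})<\vartheta^{\star}(x_{0})$. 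Then I would pick $s'\in(0,s)$ close enough to $s$ that $r_{0}<\tfrac{Np_{0}}{N-s'p_{0}}$ (possible since $\tfrac{Np_{0}}{N-s'p_{0}}\uparrow\tfrac{Np_{0}}{N-sp_{0}}>r_{0}$ as $s'\uparrow s$), so also $s'p_{0}<sp_{0}<N$; note $\operatorname{diam}\omega\le2\varrho<1$.

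\emph{Step 2 (reduction to a constant-exponent space on $\omega$).} By homogeneity of the seminorm I would assume $[u]^{s,\vartheta(\cdot,\cdot)}_{\Omega}\le1$, so $\int_{\Omega^{2}}\tfrac{|u(x)-u(y)|^{\vartheta(x,y)}}{|x-y|^{N+s\vartheta(x,y)}}\,dx\,dy\le1$ and $\|u\|_{L^{\eta(\cdot)}(\Omega)}\le\|u\|_{\mathcal W}$. Since $p_{0}\le\min_{\overline\omega}\eta$ and $|\Omega|<\infty$, one has $L^{\eta(\cdot)}(\Omega)\hookrightarrow L^{p_{0}}(\omega)$, hence $u\in L^{p_{0}}(\omega)$ with $\|u\|_{L^{p_{0}}(\omega)}\lesssim\|u\|_{\mathcal W}$ (this is the only place the hypothesis $\eta(x)>\vartheta(x,x)$ enters). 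To bound the Gagliardo seminorm I would split $\omega^{2}=A_{1}\cup A_{2}$, $A_{1}:=\{(x,y):|u(x)-u(y)|\le|x-y|^{s}\}$, $A_{2}:=\omega^{2}\setminus A_{1}$. On $A_{1}$, $\dfrac{|u(x)-u(y)|^{p_{0}}}{|x-y|^{N+s'p_{0}}}\le\dfrac{1}{|x-y|^{N-(s-s')p_{0}}}$, integrable over the bounded set $\omega^{2}$ since $(s-s')p_{0}>0$. On $A_{2}$, using $p_{0}\le\vartheta(x,y)$, $|x-y|<1$ and $s'<s$, one checks $|u(x)-u(y)|^{p_{0}-\vartheta(x,y)}\le|x-y|^{sp_{0}-s\vartheta(x,y)}\le|x-y|^{s'p_{0}-s\vartheta(x,y)}$, whence $\dfrac{|u(x)-u(y)|^{p_{0}}}{|x-y|^{N+s'p_{0}}}\le\dfrac{|u(x)-u(y)|^{\vartheta(x,y)}}{|x-y|^{N+s\vartheta(x,y)}}$, and the integral over $A_{2}\subset\Omega^{2}$ is $\le1$. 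Thus $u|_{\omega}\in W^{s',p_{0}}(\omega)$ with $\|u\|_{W^{s',p_{0}}(\omega)}\le C\|u\|_{\mathcal W}$.

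\emph{Step 3 (classical compact embedding and globalization).} Since $\partial\Omega$ is smooth, $\omega=B_{\varrho}(x_{0})\cap\Omega$ is a bounded extension domain, so from $s'p_{0}<N$ and $r_{0}<\tfrac{Np_{0}}{N-s'p_{0}}$ the classical fractional Sobolev theorem gives $W^{s',p_{0}}(\omega)\hookrightarrow\hookrightarrow L^{r_{0}}(\omega)$ compactly, while $L^{r_{0}}(\omega)\hookrightarrow L^{\mathrm r(\cdot)}(\omega)$ continuously because $\mathrm r\le r_{0}$ on $\omega$ and $|\omega|<\infty$. Composed with $u\mapsto u|_{\omega}$ from Step 2 this is a compact map $\mathcal W^{s,\eta(\cdot),\vartheta(\cdot,\cdot)}(\Omega)\to L^{\mathrm r(\cdot)}(\omega)$ with $\|u\|_{L^{\mathrm r(\cdot)}(\omega)}\le C\|u\|_{\mathcal W}$. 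Then I would extract a finite subcover $\{B_{\varrho_{i}}(x_{i})\}_{i=1}^{m}$ of $\overline{\Omega}$ by such balls, set $\omega_{i}:=B_{\varrho_{i}}(x_{i})\cap\Omega$, and sum: $\|u\|_{L^{\mathrm r(\cdot)}(\Omega)}\le C_{0}\sum_{i}\|u\|_{L^{\mathrm r(\cdot)}(\omega_{i})}\le\mathcal S\|u\|_{\mathcal W}$, giving continuity. For compactness, a bounded sequence in $\mathcal W$ admits successive subsequences converging in $L^{\mathrm r(\cdot)}(\omega_{1}),\dots,L^{\mathrm r(\cdot)}(\omega_{m})$; a diagonal subsequence is Cauchy in each $L^{\mathrm r(\cdot)}(\omega_{i})$, hence in $L^{\mathrm r(\cdot)}(\Omega)$.

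\emph{Expected main difficulty.} The delicate part is Step 1: arranging $r_{0}$, $p_{0}$, $s'$ and the radius $\varrho$ simultaneously so that $p_{0}\le\eta$ and $p_{0}\le\vartheta$ on the piece, $r_{0}<\tfrac{Np_{0}}{N-s'p_{0}}$, $s'p_{0}<N$ and $\operatorname{diam}\omega<1$ all hold — this is where the pointwise gap $\mathrm r(x)<\vartheta^{\star}(x)$ on $\overline\Omega$ and the hypothesis $\eta(x)>\vartheta(x,x)$ are genuinely used — together with the routine but careful bookkeeping between modulars and norms in variable-exponent spaces (handled by the normalization $[u]^{s,\vartheta}_{\Omega}\le1$). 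The splitting estimate in Step 2 and the finite-cover/diagonal argument in Step 3 are then standard.
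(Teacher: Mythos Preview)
The paper does not actually prove this theorem: it is stated in Section~\ref{preliminaries} as a preliminary result taken from the literature (it is the variable-exponent fractional Sobolev embedding of Kaufmann--Rossi--Vidal and related works cited there), and is used as a black box. So there is no ``paper's own proof'' to compare against.

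Your proposal is the standard localization argument used in those references and is essentially correct. A couple of small points of bookkeeping are worth tightening. First, the normalization: you should normalize $\|u\|_{\mathcal W}\le 1$ (not just the seminorm), since your $A_{1}$-estimate in Step~2 produces an absolute constant rather than a quantity controlled by $\|u\|_{\mathcal W}$; the final inequality $\|u\|_{L^{\mathrm r(\cdot)}}\le\mathcal S\|u\|_{\mathcal W}$ is homogeneous, so this reduction is legitimate, but your intermediate bound $\|u\|_{W^{s',p_{0}}(\omega)}\le C$ is \emph{not} a linear estimate and you should not phrase it as one. Second, the r\^ole of the hypothesis $\eta(x)>\vartheta(x,x)$ is already used in Step~1 (to identify $\min\{\eta(x_{0}),\vartheta(x_{0},x_{0})\}=\vartheta(x_{0},x_{0})$ and thus make the choice of $p_{0}$ possible), not only in Step~2 as you say. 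With these cosmetic fixes your argument goes through; the splitting estimate on $A_{1}\cup A_{2}$, the use of the classical compact embedding on each piece, and the finite-cover/diagonal extraction are all correct as written.
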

\begin{lemma}
Assume that $\Omega\subset \mathbb{R}^{N}$ is a bounded open domain. Besides, assume that \eqref{eq1} and \eqref{eq2} hold. Then $\mathcal{W}$ is a separable reflexive space. 
\end{lemma}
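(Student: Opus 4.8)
The plan is to realize $\mathcal{W}=\mathcal{W}^{s,\eta(\cdot),\vartheta(\cdot,\cdot)}(\Omega)$ as a closed linear subspace of a product of two variable-exponent Lebesgue spaces, and then to invoke the fact that separability and reflexivity pass to closed subspaces. First I would rewrite the Gagliardo modular: since
\[
\frac{|u(x)-u(y)|^{\vartheta(x,y)}}{|x-y|^{N+s\vartheta(x,y)}}=\left|\mathcal{G}_{s}u(x,y)\right|^{\vartheta(x,y)}\frac{1}{|x-y|^{N}},
\]
the seminorm $[u]^{s,\vartheta(\cdot,\cdot)}_{\Omega}$ coincides with the Luxemburg norm of $\mathcal{G}_{s}u$ in the variable-exponent Lebesgue space $L^{\vartheta(\cdot,\cdot)}(\Omega^{2},d\mu)$ built over the $\sigma$-finite measure $d\mu=dx\,dy/|x-y|^{N}$ introduced earlier. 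Consequently the map
\[
T:\mathcal{W}\longrightarrow L^{\eta(\cdot)}(\Omega)\times L^{\vartheta(\cdot,\cdot)}(\Omega^{2},d\mu),\qquad Tu:=\bigl(u,\mathcal{G}_{s}u\bigr),
\]
is well defined (by the very definition of $\mathcal{W}$, $\mathcal{G}_{s}u\in L^{\vartheta(\cdot,\cdot)}(\Omega^{2},d\mu)$), linear, and, when the product carries the sum of the two norms, an isometry, because $\|Tu\|=\|u\|_{L^{\eta(\cdot)}(\Omega)}+[u]^{s,\vartheta(\cdot,\cdot)}_{\Omega}=\|u\|_{\mathcal{W}}$.

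Next I would check that the target space is separable and reflexive. Hypotheses \eqref{eq1} and \eqref{eq2} state that both exponents are bounded and bounded away from $1$; over a $\sigma$-finite measure space whose $\sigma$-algebra is countably generated modulo null sets, this is exactly the condition ensuring that the associated variable-exponent Lebesgue space is separable and uniformly convex, hence reflexive. For $L^{\eta(\cdot)}(\Omega)$ this is the content of Lemma~\ref{holder} (together with its separability statement); for the weighted space $L^{\vartheta(\cdot,\cdot)}(\Omega^{2},d\mu)$ the same proof applies verbatim — density of countably many simple functions for separability, and a Clarkson-type inequality valid uniformly for exponents in $[\vartheta^{-},\vartheta^{+}]\subset(1,\infty)$ for uniform convexity. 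A finite product of separable reflexive Banach spaces is again separable and reflexive.

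To finish, I would use that $\mathcal{W}$ is a Banach space (recalled right after its definition) and that $T$ is a linear isometry, so $T(\mathcal{W})$ is a complete, hence closed, linear subspace of $L^{\eta(\cdot)}(\Omega)\times L^{\vartheta(\cdot,\cdot)}(\Omega^{2},d\mu)$. Closed subspaces of reflexive Banach spaces are reflexive, and subsets of separable metric spaces are separable; pulling these two properties back along the isometry $T$ shows that $\mathcal{W}$ is separable and reflexive, as claimed.

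The only nonroutine point is the claim that $L^{\vartheta(\cdot,\cdot)}(\Omega^{2},d\mu)$ remains separable and uniformly convex despite the diagonal singularity of the density $|x-y|^{-N}$ of $d\mu$. This is not a genuine obstruction: $d\mu$ is $\sigma$-finite and absolutely continuous with respect to Lebesgue measure on $\Omega^{2}$, so its $\sigma$-algebra is countably generated modulo null sets, and the abstract theory of Musielak–Orlicz (variable-exponent) spaces over arbitrary $\sigma$-finite measure spaces requires nothing more than $1<\vartheta^{-}\le\vartheta(\cdot,\cdot)\le\vartheta^{+}<\infty$, all the relevant estimates being pointwise in the integrand. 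One should also note that the closedness of $T(\mathcal{W})$ relies only on completeness of $\mathcal{W}$ and on $T$ being an isometry onto its image, so no finer regularity of the operator $u\mapsto\mathcal{G}_{s}u$ is needed.
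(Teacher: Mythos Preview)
The paper states this lemma without proof; it is quoted as a preliminary fact (the surrounding material cites \cite{kaufmann} and \cite{diening}, where results of this type are established). So there is no argument in the paper to compare against.

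Your proof is correct and is precisely the standard route: embed $\mathcal{W}$ isometrically into the product $L^{\eta(\cdot)}(\Omega)\times L^{\vartheta(\cdot,\cdot)}(\Omega^{2},d\mu)$ via $u\mapsto(u,\mathcal{G}_{s}u)$, observe that the image is closed because $\mathcal{W}$ is complete, and transfer separability and reflexivity from the product. The identification of $[u]^{s,\vartheta(\cdot,\cdot)}_{\Omega}$ with the Luxemburg norm of $\mathcal{G}_{s}u$ in $L^{\vartheta(\cdot,\cdot)}(\Omega^{2},d\mu)$ is exactly right, and your remark that the diagonal singularity of $d\mu$ is harmless --- since the abstract Musielak--Orlicz theory only needs a $\sigma$-finite measure with countably generated $\sigma$-algebra and exponents trapped in $(1,\infty)$ --- addresses the one point that might give a reader pause. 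Nothing is missing.
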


\begin{remark} 
The $(s,\vartheta(\cdot,\cdot))$-convex modular function $\rho_{s,\vartheta(\cdot,\cdot)}: \mathcal{W}\to \mathbb{R}$ is given by
 
 $$\displaystyle{\rho_{s,\vartheta(\cdot,\cdot)}(u)=\int_{\Omega^{2}}\Bigg(\frac{|u(x)-u(x)|}{|x-y|^{s}} \Bigg)^{\vartheta(x,y)}\frac{dx\,dy}{|x-y|^{N}}}.$$

Note that the space $\mathcal{W}=\mathcal{W}^{s,\eta(\cdot), \vartheta(\cdot,\cdot)}(\Omega)$ can be rewritten as
\begin{equation*}
\mathcal{W}=\Bigg\{u\in L^{\eta(\cdot)}(\Omega):\rho_{s,\vartheta(\cdot,\cdot)}\Big(\frac{u}{\mu} \Big)< \infty, \mbox{ for some } \mu>0 \Bigg\},
\end{equation*}
endowed with the Luxemburg norm
$$\|u\|_{\mathcal{W}}:=\|u\|_{L^{\eta(\cdot)}(\Omega)}+[u]_{\Omega}^{s,\vartheta(\cdot,\cdot)},$$
where
$$[u]_{\Omega}^{s,\vartheta(\cdot,\cdot)}:=\inf\Big\{\mu>0: \rho_{s,\vartheta(\cdot,\cdot)}\Big(\frac{u}{\mu}\Big)\leqslant 1\Big\}.$$
For $0<s<1,$ the term $[\,\cdot\,]_{\Omega}^{s,\wp(\cdot,\cdot)}$ is called the $(s,\wp(\cdot,\cdot))$-Gagliardo-Slobodetskii seminorm.
\end{remark}

The following proposition has an important role in the relationship between the norm $\|\cdot\|_{\mathcal{W}}$   and the $\rho_{s,\vartheta(\cdot,\cdot)}$ convex modular function.

\begin{proposition}\label{lw0}
For $u\in \mathcal{W}$ and $(u_{\iota})_{\iota\in \mathbb{N}}\subset\mathcal{W}$, we have
\begin{itemize}
\item[$\textbf{(1)}$]For $u \in \mathcal{W}\setminus \{0 \}$, $\zeta = \|u\|_{\mathcal{W} } $ if and only if $ \displaystyle\rho_{s,\vartheta(\cdot,\cdot)}\bigg(\displaystyle\frac{u}{\zeta}\bigg)=1.$
   \item[$\textbf{(2)}$]$\|u\|_{  \mathcal{W}}\geqslant 1\Rightarrow \|u\|_{ \mathcal{W}}^{\vartheta^{-}}\leqslant \rho_{s,\vartheta(\cdot,\cdot)}(u)\leqslant \|u\|_{  \mathcal{W} }^{\vartheta^{+}}.$
    \item[$\textbf{(3)}$] $\|u\|_{  \mathcal{W}}\leqslant 1\Rightarrow \|u\|_{  \mathcal{W}}^{\vartheta^{+}}\leqslant \rho_{s,\vartheta(\cdot,\cdot)}(u)\leqslant \|u\|_{  \mathcal{W} }^{\vartheta^{-}}.$

\end{itemize}
\end{proposition}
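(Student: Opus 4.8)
The plan is to reduce all three assertions to elementary one‑variable analysis of the function
\[
\phi_u(\lambda):=\rho_{s,\vartheta(\cdot,\cdot)}\!\left(\frac{u}{\lambda}\right)=\int_{\Omega^{2}}\left(\frac{|u(x)-u(y)|}{\lambda\,|x-y|^{s}}\right)^{\vartheta(x,y)}\,d\mu,\qquad \lambda\in(0,\infty),
\]
attached to a fixed $u\in\mathcal{W}\setminus\{0\}$, where I read $\|u\|_{\mathcal{W}}$ as the Gagliardo–Slobodetskii (Luxemburg) seminorm $[u]^{s,\vartheta(\cdot,\cdot)}_{\Omega}=\inf\{\mu>0:\phi_u(\mu)\leqslant 1\}$ (this is the natural object for which the stated relations hold, and it coincides with the norm on $\mathcal{W}_{0}^{s,\eta(\cdot),\vartheta(\cdot,\cdot)}$). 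Everything rests on a short list of qualitative properties of $\phi_u$.

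First I would establish those properties. Since $u\in\mathcal{W}$ there is $\mu_0>0$ with $\phi_u(\mu_0)<\infty$; using $\bigl(c/t\bigr)^{\vartheta(x,y)}\leqslant t^{-\vartheta^{+}}c^{\vartheta(x,y)}$ for $t\leqslant1$ and the trivial bound for $t\geqslant1$, together with $\vartheta^{+}<\infty$, one deduces $\phi_u(\lambda)<\infty$ for every $\lambda>0$; this is precisely the point where boundedness of the exponent is indispensable. Then dominated convergence (near any $\lambda_0>0$ dominate the integrand by that of $\phi_u(\lambda_0/2)$, which is integrable, and use $\vartheta^{+}<\infty$ for pointwise convergence of the powers) shows $\phi_u$ is continuous on $(0,\infty)$. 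Pointwise strict monotonicity of $t\mapsto(c/t)^{\vartheta(x,y)}$, combined with $u\neq0$ (so the integrand is strictly positive on a set of positive $d\mu$‑measure, namely $\{(x,y):u(x)\neq u(y)\}$), gives that $\phi_u$ is non‑increasing and strictly decreasing on $\{\phi_u>0\}$. The same scaling bound gives $\phi_u(\lambda)\leqslant(\mu_0/\lambda)^{\vartheta^{-}}\phi_u(\mu_0)\to0$ as $\lambda\to\infty$, and Fatou's lemma on $\{u(x)\neq u(y)\}$ gives $\phi_u(\lambda)\to+\infty$ as $\lambda\to0^{+}$.

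Part $\mathbf{(1)}$ then follows. By the intermediate value theorem and strict monotonicity there is a unique $\lambda^{\ast}>0$ with $\phi_u(\lambda^{\ast})=1$. For $\mu>\lambda^{\ast}$, monotonicity gives $\phi_u(\mu)\leqslant1$, so $[u]\leqslant\mu$; letting $\mu\downarrow\lambda^{\ast}$ yields $[u]\leqslant\lambda^{\ast}$. For $\mu<\lambda^{\ast}$, strict monotonicity gives $\phi_u(\mu)>1$, so $\mu$ is not admissible and $[u]\geqslant\mu$; letting $\mu\uparrow\lambda^{\ast}$ yields $[u]\geqslant\lambda^{\ast}$. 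Hence $\|u\|_{\mathcal{W}}=[u]=\lambda^{\ast}$ and $\phi_u(\|u\|_{\mathcal{W}})=1$; conversely, any $\zeta>0$ with $\phi_u(\zeta)=1$ must coincide with $\lambda^{\ast}$ by uniqueness, i.e. $\zeta=\|u\|_{\mathcal{W}}$.

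Finally, Parts $\mathbf{(2)}$ and $\mathbf{(3)}$ are an immediate corollary. Set $\kappa=\|u\|_{\mathcal{W}}$, assume first $u\neq0$ so $\kappa>0$, and use Part $\mathbf{(1)}$, i.e. $\rho_{s,\vartheta(\cdot,\cdot)}(u/\kappa)=1$. Factoring,
\[
\rho_{s,\vartheta(\cdot,\cdot)}(u)=\int_{\Omega^{2}}\kappa^{\vartheta(x,y)}\left(\frac{|u(x)-u(y)|}{\kappa\,|x-y|^{s}}\right)^{\vartheta(x,y)}\,d\mu .
\]
If $\kappa\geqslant1$ then $\kappa^{\vartheta^{-}}\leqslant\kappa^{\vartheta(x,y)}\leqslant\kappa^{\vartheta^{+}}$, whence $\kappa^{\vartheta^{-}}\leqslant\rho_{s,\vartheta(\cdot,\cdot)}(u)\leqslant\kappa^{\vartheta^{+}}$, which is $\mathbf{(2)}$; if $0<\kappa\leqslant1$ the inequalities reverse, $\kappa^{\vartheta^{+}}\leqslant\kappa^{\vartheta(x,y)}\leqslant\kappa^{\vartheta^{-}}$, giving $\mathbf{(3)}$ (the case $\kappa=0$ is trivial since then $\rho_{s,\vartheta(\cdot,\cdot)}(u)=0$). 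The only genuine obstacle is the very first step of the second paragraph: without $\vartheta^{+}<\infty$, $\phi_u$ could fail to be finite or continuous and could "jump over" the level $1$, so that $\|u\|_{\mathcal{W}}$ would not be attained on $\{\phi_u=1\}$ and Part $\mathbf{(1)}$ — hence the whole proposition — would break down; all remaining steps are routine.
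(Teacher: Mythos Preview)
The paper states this proposition without proof, treating it as a standard preliminary fact (it is the fractional variable-exponent analogue of the classical norm--modular relations in $L^{p(\cdot)}$ spaces, cf.\ \cite{diening}). Your argument is correct and is precisely the standard one: show that $\lambda\mapsto\rho_{s,\vartheta(\cdot,\cdot)}(u/\lambda)$ is a continuous, strictly decreasing bijection from $(0,\infty)$ onto $(0,\infty)$ (this is exactly where $\vartheta^{+}<\infty$ is needed), identify the Luxemburg infimum with the unique level-$1$ point, and then obtain \textbf{(2)}--\textbf{(3)} by sandwiching $\kappa^{\vartheta(x,y)}$ between $\kappa^{\vartheta^{\pm}}$.

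Your caveat that $\|u\|_{\mathcal{W}}$ must be read as the Gagliardo--Slobodetskii seminorm $[u]^{s,\vartheta(\cdot,\cdot)}_{\Omega}$ (equivalently, the norm on $\mathcal{W}_{0}$) rather than the full norm $\|u\|_{L^{\eta(\cdot)}}+[u]$ is correct and necessary for the statement to hold as written; the paper's notation is loose at this point, and indeed the proposition is only ever invoked later (e.g.\ in \eqref{estgrad}) for the seminorm. Likewise, your implicit restriction in \textbf{(1)} to $u$ with nontrivial Gagliardo quotient (so that $\phi_u$ is strictly decreasing and actually hits the value $1$) is the right reading of ``$u\neq 0$'' here.
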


\begin{remark}
If $\eta(x)=\overline{\vartheta}(x):=\vartheta(x,x),$ we denote $\mathcal{W}^{s,\eta(\cdot),\vartheta(\cdot,\cdot)}(\Omega)$ and $\mathcal{W}_{0}^{s,\eta(\cdot),\vartheta(\cdot,\cdot)}(\Omega)$ by $\mathcal{W}^{s,\vartheta(\cdot,\cdot)}(\Omega)$ and $\mathcal{W}_{0}^{s,\vartheta(\cdot,\cdot)}(\Omega)$ respectively (see \cite{pezzo}). From now on we will denote by $\mathbb{X}=\mathcal{W}_{0}^{s,\vartheta(\cdot,\cdot)}(\Omega).$
\end{remark}

\subsection{Some properties of the fractional $\wp(\cdot)$-Laplacian}\quad
\noindent \noindent Let us now recall some fundamental properties about the  fractional $\wp(\cdot)-$ Laplacian operator. The proof of the results is analogous to \cite{elardjh}.

We consider the operator $\mathcal{A}: \mathbb{X}\to \mathbb{X}'$ defined by
\begin{equation}\label{operadora}
    \langle \mathcal{A}(u),v\rangle_{\mathbb{X},\mathbb{X}'}=\int_{\Omega^{2}}\mathtt{A}(x,y,t)\frac{|u(x)-u(y)|^{\wp(x,y)-2}(u(x)-u(y))(v(x)-v(y))}{|x-y|^{N+s\wp(x,y)}}\,dx\,dy
\end{equation}
for all $u,v\in \mathbb{X}$ and  $\mathbb{X}'$ is the dual space of $\mathbb{X}.$ 
\begin{proposition}\label{monotone}\quad
\begin{itemize}
\item[\textbf{(a)}] The operator $\mathcal{A}:\mathbb{X}\to \mathbb{X}'$ is \textbf{monotone}.

\item[\textbf{(b)}] The operator $\mathcal{A}:\mathbb{X}\to \mathbb{X}'$ is \textbf{coercive}.

\item[\textbf{(c)}] The operator $\mathcal{A}:\mathbb{X}\to \mathbb{X}'$ is \textbf{hemicontinuous}.
\end{itemize}
\end{proposition}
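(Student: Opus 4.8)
The plan is to verify the three properties of $\mathcal{A}$ separately, all of which follow from elementary inequalities for the map $t \mapsto |t|^{\wp-2}t$ applied pointwise inside the integral, combined with the H\"older inequality in the variable-exponent Lebesgue space $L^{\wp(\cdot,\cdot)}(\Omega^2, d\mu)$ (Lemma \ref{holder}) and the modular--norm relations of Proposition \ref{lw0}. Throughout, note that since $(W_1)$ gives $0 < \mathtt{A}(x,y,t) \leqslant a_0$, the weight only contributes harmless bounded factors, so the structure is the same as in the unweighted case treated in \cite{elardjh}.

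For \textbf{(a)} monotonicity, I would write $\langle \mathcal{A}(u) - \mathcal{A}(v), u-v\rangle$ as a single integral over $\Omega^2$ and use the classical pointwise inequality
$$
\bigl(|a|^{\wp-2}a - |b|^{\wp-2}b\bigr)(a-b) \geqslant 0 \quad \text{for all } a,b \in \mathbb{R}, \ \wp > 1,
$$
applied with $a = u(x)-u(y)$, $b = v(x)-v(y)$ and $\wp = \wp(x,y)$; since $\mathtt{A} > 0$ and $|x-y|^{-(N+s\wp(x,y))} > 0$, the integrand is nonnegative, hence $\langle \mathcal{A}(u)-\mathcal{A}(v), u-v\rangle \geqslant 0$. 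For \textbf{(c)} hemicontinuity, fix $u,v,w \in \mathbb{X}$ and consider $\lambda \mapsto \langle \mathcal{A}(u+\lambda w), v\rangle$; the integrand converges pointwise as $\lambda \to \lambda_0$ by continuity of $t \mapsto |t|^{\wp(x,y)-2}t$, and I would produce an integrable dominating function using Young's inequality in the form $|a|^{\wp-1}|b| \leqslant \tfrac{\wp-1}{\wp}|a|^{\wp} + \tfrac1\wp |b|^{\wp}$ together with the fact that $u+\lambda w$ stays in a bounded subset of $\mathbb{X}$ for $\lambda$ near $\lambda_0$, so that $\rho_{s,\wp(\cdot,\cdot)}$ of the relevant differences is uniformly bounded; then dominated convergence gives the claim. (One must be slightly careful splitting $\Omega^2$ into the region where $|\mathcal{G}_{(s,\wp(\cdot,\cdot))}(\cdot)| \leqslant 1$ and its complement because of the two-sided modular estimates, but this is routine.)

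For \textbf{(b)} coercivity, I would estimate from below:
$$
\langle \mathcal{A}(u), u\rangle = \int_{\Omega^2} \mathtt{A}(x,y,t) \frac{|u(x)-u(y)|^{\wp(x,y)}}{|x-y|^{N+s\wp(x,y)}}\,dx\,dy \geqslant c\, \rho_{s,\wp(\cdot,\cdot)}(u)
$$
for a constant $c>0$ depending on a positive lower bound of $\mathtt{A}$ (which exists on the relevant compact time interval by $(W_1)$; if only $\mathtt{A}>0$ a.e. is assumed one restricts to $\sigma(t)>0$). Then by Proposition \ref{lw0}(2), whenever $\|u\|_{\mathbb{X}} \geqslant 1$ we get $\rho_{s,\wp(\cdot,\cdot)}(u) \geqslant \|u\|_{\mathbb{X}}^{\wp^-}$, so $\langle \mathcal{A}(u), u\rangle / \|u\|_{\mathbb{X}} \geqslant c\|u\|_{\mathbb{X}}^{\wp^- - 1} \to \infty$ as $\|u\|_{\mathbb{X}} \to \infty$, since $\wp^- > 1$. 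This establishes coercivity.

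The only genuinely delicate point — the ``main obstacle'' — is the hemicontinuity argument, specifically constructing the integrable majorant uniformly in the parameter $\lambda$: because the exponent $\wp(x,y)$ varies, one cannot simply invoke a single $L^{\wp'}$ bound but must argue through the modular $\rho_{s,\wp(\cdot,\cdot)}$ and use that norm-bounded sets have bounded modular (Proposition \ref{lw0}), then pass from modular bounds back to an $L^1(d\mu)$-domination of the integrand $\mathtt{A}(x,y,t)\,|\mathcal{G}_{(s,\wp(\cdot,\cdot))}(u+\lambda w)(x,y)|^{\wp(x,y)-1}\,|\mathcal{G}_{(s,\wp(\cdot,\cdot))}v(x,y)|$ via Young's inequality with the variable exponent. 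Everything else reduces to the pointwise inequalities above and the boundedness of $\mathtt{A}$, exactly as in \cite{elardjh}, so I would simply remark that the proof is analogous and sketch the weight-dependent modifications.
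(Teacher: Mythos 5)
The paper does not give a proof of Proposition~\ref{monotone} at all; it simply remarks that the proof is analogous to the one in \cite{elardjh}, so there is nothing to compare line by line. Your sketch is the standard argument for such operators --- pointwise monotonicity of $t\mapsto|t|^{\wp-2}t$ for~(a), the modular--norm inequalities of Proposition~\ref{lw0} for~(b), and a dominated-convergence argument with a Young-inequality majorant for~(c) --- and it is essentially correct, including your justified caution that coercivity genuinely needs $\mathtt{A}$ to be bounded away from zero (which $(W_1)$ as written only guarantees qualitatively, since $\mathfrak{a}\in L^\infty$ with $\mathfrak{a}>0$ a.e.\ need not have a positive essential infimum), a subtlety the paper itself does not address.
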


\subsection{Subdifferentials} We present a brief review of some results of the theory of monotone operators. Let $H$ be a Hilbert space with inner product $(\cdot,\cdot)_{H}$ and we consider the functional (also know as potential) $\Psi:H\to (-\infty,\infty)$ be a proper, convex and lower semi-continuous functional with an effective domain
\begin{equation*}
\mathcal{D}(\Psi):=\{u\in H:\Psi(u)<\infty\}.
\end{equation*} 

The subdifferential $\partial\Psi$ of the functional $\Psi$ is defined by
\begin{equation*}
\begin{split}
&\partial \Psi(u):=\{\xi\in H: \forall v\in H:\Psi(v)-\Psi(u)\geqslant (\xi,v-u )_{H}\},\\
&\mathcal{D}(\partial \Psi):=\{u\in \mathcal{D}(\Psi):\exists \,\xi\in H\,\forall v\in H:\Psi(v)-\Psi(u)\geqslant (\xi,v-u )_{H}\}.
\end{split}
\end{equation*}
Note that $\mathcal{D}(\partial\Psi)\subset \mathcal{D}(\Psi).$

\begin{remark}
Taking into account Proposition \ref{monotone}, the operator $\mathcal{A},$ with domain $\mathbb{X},$ is maximal monotone and $R(\mathcal{A}):=\mathcal{A}(\mathbb{X})=\mathbb{X}'$(see \cite[Theorem 1.3, p. 40]{barvu}).
\end{remark}

\begin{remark}\label{remark1}
From \cite[Example 2.3.7, p. 26]{brezis1}, the operator $\mathcal{A}_{\mathcal{H}},$ the realization of $\mathcal{A}$ at $\mathcal{H}=L^{2}(\Omega)$ defined by
\begin{equation*}
\begin{cases} \mathcal{D}(\mathcal{A}_{\mathcal{H}}):=\{u\in \mathbb{X}:\mathcal{A}\in \mathcal{H}\}, & \\  \mathcal{A}_{\mathcal{H}}(u)=\mathcal{A}(u),\,\, \text{if $u\in \mathcal{D}(\mathcal{A}_{\mathcal{H}})$,}
\end{cases}
\end{equation*}
is maximal monotone in $\mathcal{H}.$
\end{remark}

From now on we will write $(-\Delta)_{\wp(\cdot)}^{s}u$ instead of $\mathcal{A}_{\mathcal{H}}(u)$ as we defined above.
\subsection{The weighted fractional $\wp(\cdot)$-Laplacian operator and technical results}\label{sec32}\quad
\noindent\noindent In this subsection, we will see some fundamental properties about the weighted fractional  $\wp(\cdot)$-Laplacian operator. The proof of these results is similar to \cite{elardjh}  and will be omitted. 
Now we will define the functional that for better exposition purposes will only be denoted in this subsection in this way, in each section/subsection the shape of our functional will be specified.

Define the functional $\Phi_{\mathtt{A}}:\mathcal{H}\to \mathbb{R}\cup \{+\infty\}$ by
\begin{equation*} \Phi_{\mathtt{A}}(u) =
\begin{cases}  \displaystyle{\int_{\Omega^{2} }\frac{\mathtt{A}(x,y,t)}{\wp(x,y)}\frac{|u(x)-u(y)|^{\wp(x,y)}}{|x-y|^{N+s\wp(x,y)}}\,dx\,dy}& \text{if $u\in \mathbb{X},$}\\ +\infty & \text{if $u\in \mathcal{H}\setminus  \mathbb{X}$} \end{cases} \end{equation*}
and  $\mathcal{D}(\Phi_{\mathtt{A}})=\{u\in \mathcal{H}:\Phi_{\mathtt{A}}(u)<+\infty\}$ the effective domain of $\Phi_\mathtt{A}.$

Now we will we present some lemmas necessary in the present work.

\begin{lemma}\label{convex}
The functional $\Phi_{\mathtt{A}}$ is convex and proper.
\end{lemma}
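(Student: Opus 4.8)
The plan is to verify convexity and properness directly from the definition of $\Phi_{\mathtt{A}}$. The key observation is that the integrand is, for each fixed pair $(x,y)$, a nonnegative multiple of the map $t \mapsto |t|^{\wp(x,y)}$ composed with the linear functional $u \mapsto u(x) - u(y)$, so the whole thing is a superposition (in fact an integral) of convex functions of $u$. Concretely, I would first note that for every exponent $q = \wp(x,y) \in (1,\infty)$ the real function $t \mapsto |t|^{q}$ is convex on $\mathbb{R}$; hence for $u_1, u_2 \in \mathbb{X}$ and $\lambda \in [0,1]$, writing $\delta_i := u_i(x) - u_i(y)$, one has
\begin{equation*}
\bigl|\lambda \delta_1 + (1-\lambda)\delta_2\bigr|^{\wp(x,y)} \leqslant \lambda |\delta_1|^{\wp(x,y)} + (1-\lambda)|\delta_2|^{\wp(x,y)}.
\end{equation*}
Multiplying this pointwise inequality by the nonnegative weight $\mathtt{A}(x,y,t)/\bigl(\wp(x,y)\,|x-y|^{N+s\wp(x,y)}\bigr)$ — which is nonnegative by Assumption $(W_1)$ — and integrating over $\Omega^{2}$ with respect to $dx\,dy$ (using monotonicity of the integral, all integrands being nonnegative and measurable), gives
\begin{equation*}
\Phi_{\mathtt{A}}\bigl(\lambda u_1 + (1-\lambda)u_2\bigr) \leqslant \lambda\,\Phi_{\mathtt{A}}(u_1) + (1-\lambda)\,\Phi_{\mathtt{A}}(u_2),
\end{equation*}
which is exactly convexity on $\mathbb{X}$. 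For the case where $u_1$ or $u_2$ lies in $\mathcal{H} \setminus \mathbb{X}$, the right-hand side is $+\infty$ and the inequality holds trivially; one only needs that $\mathbb{X}$ is a linear subspace of $\mathcal{H}$, so that the convex combination of two elements of $\mathbb{X}$ stays in $\mathbb{X}$ and the convex combination involving a point outside $\mathbb{X}$ need not be examined. Thus $\Phi_{\mathtt{A}}$ is convex on all of $\mathcal{H}$.

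For properness I must exhibit a point of $\mathcal{H}$ at which $\Phi_{\mathtt{A}}$ is finite, and check it is never $-\infty$. The latter is immediate since the integrand is nonnegative, so $\Phi_{\mathtt{A}}(u) \in [0,+\infty]$ for every $u$. For finiteness, take any $u \in \mathscr{C}_0^{\infty}(\Omega) \subset \mathbb{X}$ (for instance $u \equiv 0$, which gives $\Phi_{\mathtt{A}}(0) = 0$, or a genuinely nonzero test function); for such $u$ the quantity $\int_{\Omega^{2}} \frac{|u(x)-u(y)|^{\wp(x,y)}}{|x-y|^{N+s\wp(x,y)}}\,dx\,dy$ is finite by the very definition of the space $\mathcal{W}_0^{s,\wp(\cdot,\cdot)}(\Omega) = \mathbb{X}$, and multiplying by the bounded factor $\mathtt{A}(x,y,t)/\wp(x,y) \leqslant a_0$ (again by $(W_1)$, together with $\wp(x,y) > 1$) keeps it finite; hence $\Phi_{\mathtt{A}}(u) < +\infty$ and $\mathcal{D}(\Phi_{\mathtt{A}}) \neq \emptyset$. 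This proves $\Phi_{\mathtt{A}}$ is proper.

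I do not anticipate a serious obstacle here: the statement is the kind of structural fact that follows from convexity of $t \mapsto |t|^{q}$ plus nonnegativity of the weight and finiteness on $\mathbb{X}$. The only mildly delicate point is the bookkeeping at the "extended-real" boundary — being careful that the convexity inequality is stated for all of $\mathcal{H}$ (not just $\mathbb{X}$) and that one genuinely uses $\mathbb{X}$ being a subspace — but this is routine. (The companion fact that $\Phi_{\mathtt{A}}$ is also lower semicontinuous on $\mathcal{H}$, which one typically wants alongside convexity and properness in order to invoke subdifferential theory, is not asserted in this lemma and would be handled separately, e.g. via Fatou's lemma along an $L^2$-convergent sequence after passing to a subsequence converging a.e.)
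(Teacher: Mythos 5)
The paper does not present its own proof of this lemma; it states that the proofs in this subsection "are similar to \cite{elardjh} and will be omitted." Your argument is the standard direct one that such a reference would give, and it is correct: convexity of $t\mapsto|t|^{q}$ for $q>1$, composition with the linear map $u\mapsto u(x)-u(y)$, multiplication by the a.e.\ nonnegative weight $\mathtt{A}(x,y,t)/\bigl(\wp(x,y)|x-y|^{N+s\wp(x,y)}\bigr)$, and monotone integration give convexity on $\mathbb{X}$; the extended-real bookkeeping works because $\mathbb{X}$ is a linear subspace so the only way a convex combination can leave $\mathbb{X}$ is if one endpoint already lies outside, in which case the right-hand side is $+\infty$; and properness follows from $\Phi_{\mathtt{A}}\geqslant 0$ together with $\Phi_{\mathtt{A}}(0)=0$. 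One small remark on a sentence you could tighten: for a generic $u\in\mathbb{X}$ the definition of the space only gives finiteness of the modular at \emph{some} $\mu>0$, not directly at $\mu=1$, so the clean way to see $\mathcal{D}(\Phi_{\mathtt{A}})\neq\emptyset$ is either the trivial choice $u\equiv 0$ (which you do mention) or, for a nonzero $u\in\mathscr{C}_0^\infty(\Omega)$, the Lipschitz bound $|u(x)-u(y)|\leqslant L|x-y|$ combined with boundedness of $\wp$ and $s<1$, which makes the kernel integrable near the diagonal; the "by the very definition of $\mathbb{X}$" phrasing is slightly imprecise on its own. This does not affect correctness.
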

\begin{theorem}\label{tu}
The operator  $(-\Delta)_{\wp(\cdot), \mathtt{A}}^{s}$ is the subdiferential $\partial\Phi_{\mathtt{A}}$ of $\Phi_{\mathtt{A}}$.
\end{theorem}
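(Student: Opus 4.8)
The plan is to show that the Gâteaux derivative of $\Phi_{\mathtt{A}}$ at any $u\in\mathbb{X}$ coincides with $\mathcal{A}(u)$ (in the sense of \eqref{operadora}), and then invoke convexity to upgrade this to the statement that $\mathcal{A}=\partial\Phi_{\mathtt{A}}$ as operators on $\mathcal{H}=L^2(\Omega)$. First I would fix $u,v\in\mathbb{X}$ and study the real function $t\mapsto \Phi_{\mathtt{A}}(u+tv)$ on a neighbourhood of $0$. Writing $\psi(r)=|r|^{\wp(x,y)}/\wp(x,y)$, the integrand is $\frac{\mathtt{A}(x,y,t)}{|x-y|^{N+s\wp(x,y)}}\,\psi\big(u(x)-u(y)+t(v(x)-v(y))\big)$, and since $\psi'(r)=|r|^{\wp(x,y)-2}r$, differentiating formally under the integral sign yields exactly $\langle\mathcal{A}(u+tv),v\rangle$ at a general $t$, hence $\langle\mathcal{A}(u),v\rangle$ at $t=0$. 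To justify the differentiation under the integral I would use the convexity/monotonicity of $\psi$ to bound the difference quotients: for $|t|\le 1$ the quantity $\big|\psi(a+tb)-\psi(a)\big|/|t|$ is dominated by $\sup_{|\tau|\le1}|\psi'(a+\tau b)|\,|b|\le C\big(|a|^{\wp-1}+|b|^{\wp-1}\big)|b|$, and after dividing by $|x-y|^{N+s\wp(x,y)}$ and multiplying by $\mathtt{A}\le a_0$ this is integrable over $\Omega^2$ precisely because $u,v\in\mathbb{X}=\mathcal{W}^{s,\wp(\cdot,\cdot)}_0(\Omega)$ (here one uses the finiteness of the modular $\rho_{s,\wp(\cdot,\cdot)}$ together with Young's inequality with variable exponent to absorb the mixed term $|a|^{\wp-1}|b|$). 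Dominated convergence then gives $\frac{d}{dt}\Phi_{\mathtt{A}}(u+tv)=\langle\mathcal{A}(u+tv),v\rangle$.

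Once $\Phi_{\mathtt{A}}$ is shown to be Gâteaux differentiable on $\mathbb{X}$ with derivative $\mathcal{A}$, I would combine this with Lemma \ref{convex} (convexity and properness of $\Phi_{\mathtt{A}}$). For a convex functional, Gâteaux differentiability at $u$ forces the subdifferential (computed in the duality $\langle\mathbb{X}',\mathbb{X}\rangle$) to be the singleton $\{\mathcal{A}(u)\}$: indeed convexity gives $\Phi_{\mathtt{A}}(u+tv)\ge \Phi_{\mathtt{A}}(u)+t\langle\mathcal{A}(u),v\rangle$ for all $t$, and taking $t=1$ yields $\Phi_{\mathtt{A}}(u+v)-\Phi_{\mathtt{A}}(u)\ge\langle\mathcal{A}(u),v\rangle$ for every $v\in\mathbb{X}$, i.e.\ $\mathcal{A}(u)\in\partial\Phi_{\mathtt{A}}(u)$; conversely any $\xi\in\partial\Phi_{\mathtt{A}}(u)$ satisfies $\langle\xi-\mathcal{A}(u),v\rangle\le o(t)/t\to 0$ as $t\downarrow 0$ and, replacing $v$ by $-v$, $\langle\xi-\mathcal{A}(u),v\rangle=0$ for all $v$, so $\xi=\mathcal{A}(u)$. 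Then I would pass to the realization in $\mathcal{H}=L^2(\Omega)$: by the definition of $\Phi_{\mathtt{A}}$ on $\mathcal{H}$ (equal to $+\infty$ off $\mathbb{X}$) and by Remark \ref{remark1}, $\partial\Phi_{\mathtt{A}}$ taken in the Hilbert space $\mathcal{H}$ has effective domain $\{u\in\mathbb{X}:\mathcal{A}(u)\in\mathcal{H}\}$ and there coincides with $\mathcal{A}_{\mathcal{H}}(u)=\mathcal{A}(u)$; this is exactly the operator we have agreed to denote $(-\Delta)^s_{\wp(\cdot),\mathtt{A}}$. The Gelfand triple $\mathbb{X}\hookrightarrow \mathcal{H}\hookrightarrow\mathbb{X}'$ established earlier makes the identification between the $\mathbb{X}'$-valued subdifferential and its $\mathcal{H}$-realization legitimate.

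The main obstacle I anticipate is the rigorous justification of differentiation under the integral sign in the variable-exponent setting: the elementary bound $|\psi'(r)|\le|r|^{\wp-1}$ produces, after expanding $|a+\tau b|^{\wp-1}$, a mixed term of the form $|u(x)-u(y)|^{\wp(x,y)-1}\,|v(x)-v(y)|/|x-y|^{N+s\wp(x,y)}$ whose integrability over $\Omega^2$ is not immediate and requires the variable-exponent Young inequality $|a|^{\wp-1}|b|\le \frac{\wp-1}{\wp}|a|^{\wp}+\frac{1}{\wp}|b|^{\wp}$ applied pointwise in $(x,y)$, followed by the fact that both resulting integrals are finite since they are (up to the harmless factor $\mathtt{A}\le a_0$ and the weights $|x-y|^{-N-s\wp(x,y)}$ already built into the modular) controlled by $\rho_{s,\wp(\cdot,\cdot)}(u)$ and $\rho_{s,\wp(\cdot,\cdot)}(v)$, which are finite for elements of $\mathbb{X}$ by Proposition \ref{lw0}. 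Everything else — convexity of $\Phi_{\mathtt{A}}$, the abstract characterization of the subdifferential of a Gâteaux-differentiable convex functional, and the passage to the $L^2$-realization — is standard and can be cited from \cite{barvu,brezis1} as in the statements already recorded above; the proof is in any case parallel to the one in \cite{elardjh}, so I would simply indicate the modifications needed to carry the weight $\mathtt{A}$ and the variable exponent $\wp(\cdot,\cdot)$ through the argument.
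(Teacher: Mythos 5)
Your proposal is mathematically sound, and the key technical obstruction you identify---the need for the variable-exponent Young inequality $|a|^{\wp-1}|b| \leqslant \tfrac{\wp-1}{\wp}|a|^{\wp} + \tfrac{1}{\wp}|b|^{\wp}$ applied pointwise in $(x,y)$ after splitting the singular kernel $|x-y|^{-(N+s\wp(x,y))}$ into conjugate powers, so as to dominate the difference quotients by modular integrals of $u$ and $v$---is real, and you resolve it correctly; the passage from the $\mathbb{X}'$-subdifferential to the $\mathcal{H}=L^{2}(\Omega)$-realization via the Gelfand triple is also handled properly. The paper itself omits the proof and refers to \cite{elardjh}, so no line-by-line comparison is possible; note, though, that your route through G\^ateaux differentiability plus the convexity of Lemma \ref{convex} essentially re-derives Lemma \ref{frechet} along the way, whereas the paper's ordering (Lemma \ref{convex}, then Theorem \ref{tu}, with Lemma \ref{frechet} stated only afterwards) together with Remark \ref{remark1} suggests a shorter route that bypasses differentiation under the integral entirely: integrate the pointwise convexity inequality
\[
\frac{|b|^{\wp(x,y)}}{\wp(x,y)} - \frac{|a|^{\wp(x,y)}}{\wp(x,y)} \geqslant |a|^{\wp(x,y)-2}a\,(b-a),\qquad a=u(x)-u(y),\ \ b=v(x)-v(y),
\]
against the weight $\mathtt{A}(x,y,t)/|x-y|^{N+s\wp(x,y)}$ to obtain $\mathcal{A}_{\mathcal{H}}(u)\in\partial\Phi_{\mathtt{A}}(u)$ directly for $u\in D(\mathcal{A}_{\mathcal{H}})$, and then conclude $\mathcal{A}_{\mathcal{H}}=\partial\Phi_{\mathtt{A}}$ from the maximal monotonicity of $\mathcal{A}_{\mathcal{H}}$ (Remark \ref{remark1}) combined with the automatic monotonicity of a subdifferential. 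That alternative buys you a proof with no dominated-convergence justification at all; your version buys the additional information of Lemma \ref{frechet} as a byproduct. One small presentational caveat: you reuse the letter $t$ both as the time slot in $\mathtt{A}(x,y,t)$ and as the G\^ateaux parameter in $u+tv$; since $\Phi_{\mathtt{A}}$ is being differentiated at a \emph{fixed} time, use a separate letter (say $h$) for the perturbation parameter.
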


\begin{remark}
Note that $\overline{\mathcal{D}(\partial\Phi_{\mathtt{A}})}^{\mathcal{H}}=\mathcal{D}(\Phi_{\mathtt{A}})$ and $\mathcal{D}(\Phi_{\mathtt{A}})= \mathbb{X}$ (see \cite[Corollary 2.1, p. 55]{barvu}), since the embedding $\mathbb{X}\hookrightarrow \mathcal{H}$ is continuous and compact, we have $\overline{\mathcal{D}(\mathcal{A}_{\mathcal{H}})}^{\mathcal{H}}=\mathcal{H}.$
\end{remark}
\begin{lemma}\label{frechet}
The restriction $\widehat{\Phi}_{\mathtt{A}}=\Phi_{\mathtt{A}}{|_{\mathbb{X}}}$  is of class $\mathscr{C}^{1}( \mathbb{X},\mathbb{R}).$ Moreover, the Fr\'echet derivative $d\widehat{\Phi}_{\mathtt{A}}$ of $\widehat{\Phi_{\mathtt{A}}}$ at $u\in  \mathbb{X}$ coincides with $\mathcal{A}u$ valuated at $u_{|_{\partial\Omega}}=0$ in the sense of distribution, that is,
\begin{equation*}
\langle \widehat{\Phi}'_{\mathtt{A}}(u),v\rangle_{ \mathbb{X}, \mathbb{X}'}=\langle \mathcal{A}(u),v\rangle_{ \mathbb{X}, \mathbb{X}'} \mbox{ for all }u,v \in  \mathbb{X},
\end{equation*}
where the operator $\mathcal{A}$ is defined  in \eqref{operadora}.

 Besides, the restriction $\widehat{\Phi}_{\mathtt{A}}$ is weakly lower semi-continuous in $ \mathbb{X}.$
\end{lemma}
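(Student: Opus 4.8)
The plan is to establish the three assertions in sequence: (a) $\widehat{\Phi}_{\mathtt{A}}$ is Gâteaux differentiable on $\mathbb{X}$ with Gâteaux differential $\mathcal{A}$ from \eqref{operadora}; (b) $\mathcal{A}:\mathbb{X}\to\mathbb{X}'$ is continuous for the norm topologies, which upgrades (a) to $\widehat{\Phi}_{\mathtt{A}}\in\mathscr{C}^{1}(\mathbb{X},\mathbb{R})$ with Fréchet derivative $\mathcal{A}$; and (c) weak lower semicontinuity, which will follow from convexity. Throughout I would use the rewriting $\langle\mathcal{A}(w),v\rangle=\int_{\Omega^{2}}\mathtt{A}(x,y,t)\,|\mathcal{G}_{s}w|^{\wp(x,y)-2}\mathcal{G}_{s}w\,\mathcal{G}_{s}v\,d\mu$ of \eqref{operadora}, together with the fact that $\|v\|_{\mathbb{X}}=[v]_{\Omega}^{s,\wp(\cdot,\cdot)}$ equals the Luxemburg norm of $\mathcal{G}_{s}v$ in $L^{\wp(\cdot,\cdot)}(\Omega^{2},d\mu)$, and that for $u\in\mathbb{X}$ one has $\int_{\Omega^{2}}|\mathcal{G}_{s}u|^{\wp(x,y)}\,d\mu=\rho_{s,\wp(\cdot,\cdot)}(u)<\infty$ by Proposition \ref{lw0}.

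For (a), fix $u,v\in\mathbb{X}$ and, for $|t|\le 1$, set $h(t):=\widehat{\Phi}_{\mathtt{A}}(u+tv)$. For a.e.\ $(x,y)\in\Omega^{2}$ the map $t\mapsto \tfrac{\mathtt{A}(x,y,t)}{\wp(x,y)}\,|\mathcal{G}_{s}(u+tv)(x,y)|^{\wp(x,y)}$ is $\mathscr{C}^{1}$, since $\tau\mapsto|\tau|^{p}$ is $\mathscr{C}^{1}$ on $\mathbb{R}$ for $p>1$, and its $t$-derivative at $t=0$ is exactly the integrand defining $\langle\mathcal{A}(u),v\rangle$. To differentiate under the integral sign I would bound the difference quotients, via the mean value theorem and $|\mathcal{G}_{s}(u+tv)|\le|\mathcal{G}_{s}u|+|\mathcal{G}_{s}v|$, by the fixed majorant $C_{0}a_{0}\big(|\mathcal{G}_{s}u|^{\wp(x,y)-1}+|\mathcal{G}_{s}v|^{\wp(x,y)-1}\big)|\mathcal{G}_{s}v|$ with $C_{0}=\max\{1,2^{\wp^{+}-1}\}$. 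This function lies in $L^{1}(\Omega^{2},d\mu)$: by the variable-exponent Hölder inequality (Lemma \ref{holder}) with the conjugate pair $\wp'(x,y),\wp(x,y)$, its integral is dominated by a constant times $\big(\||\mathcal{G}_{s}u|^{\wp(\cdot,\cdot)-1}\|_{L^{\wp'(\cdot,\cdot)}(d\mu)}+\||\mathcal{G}_{s}v|^{\wp(\cdot,\cdot)-1}\|_{L^{\wp'(\cdot,\cdot)}(d\mu)}\big)\|\mathcal{G}_{s}v\|_{L^{\wp(\cdot,\cdot)}(d\mu)}$, and each factor is finite because $\int_{\Omega^{2}}|\mathcal{G}_{s}u|^{(\wp(x,y)-1)\wp'(x,y)}\,d\mu=\rho_{s,\wp(\cdot,\cdot)}(u)<\infty$, similarly for $v$. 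Lebesgue's dominated convergence theorem then yields $h'(0)=\langle\mathcal{A}(u),v\rangle$.

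For (b), the same Hölder bound gives $\|\mathcal{A}(u_{n})-\mathcal{A}(u)\|_{\mathbb{X}'}\le C\,a_{0}\,\big\|\,|\mathcal{G}_{s}u_{n}|^{\wp(\cdot,\cdot)-2}\mathcal{G}_{s}u_{n}-|\mathcal{G}_{s}u|^{\wp(\cdot,\cdot)-2}\mathcal{G}_{s}u\,\big\|_{L^{\wp'(\cdot,\cdot)}(d\mu)}$, so it suffices to prove that the Nemytskii map $g\mapsto|g|^{\wp(\cdot,\cdot)-2}g$ is continuous from $L^{\wp(\cdot,\cdot)}(\Omega^{2},d\mu)$ to $L^{\wp'(\cdot,\cdot)}(\Omega^{2},d\mu)$, recalling that $u_{n}\to u$ in $\mathbb{X}$ is equivalent to $\mathcal{G}_{s}u_{n}\to\mathcal{G}_{s}u$ in $L^{\wp(\cdot,\cdot)}(d\mu)$. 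I would do this by the usual subsequence principle: extract a subsequence along which $\mathcal{G}_{s}u_{n}\to\mathcal{G}_{s}u$ $\mu$-a.e.\ and is modularly dominated, then invoke pointwise continuity of $\tau\mapsto|\tau|^{p-2}\tau$ together with a Vitali-type (uniform integrability) argument in the Musielak--Orlicz space $L^{\wp'(\cdot,\cdot)}(d\mu)$, and close by the subsequence argument. Since the Gâteaux differential $u\mapsto\mathcal{A}(u)$ is then norm-continuous, a classical theorem of calculus in Banach spaces gives $\widehat{\Phi}_{\mathtt{A}}\in\mathscr{C}^{1}(\mathbb{X},\mathbb{R})$ with $d\widehat{\Phi}_{\mathtt{A}}(u)=\mathcal{A}(u)$; the asserted identity $\langle\widehat{\Phi}'_{\mathtt{A}}(u),v\rangle=\langle\mathcal{A}(u),v\rangle$ is exactly this, and the boundary condition $u|_{\partial\Omega}=0$ is encoded in $u\in\mathbb{X}=\mathcal{W}_{0}^{s,\wp(\cdot,\cdot)}(\Omega)$.

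Finally, for (c): $\widehat{\Phi}_{\mathtt{A}}$ is convex, being the restriction to the convex set $\mathbb{X}$ of the convex functional $\Phi_{\mathtt{A}}$ of Lemma \ref{convex}, and continuous on $\mathbb{X}$ since it is $\mathscr{C}^{1}$; hence its epigraph is convex and strongly closed, therefore weakly closed in the reflexive space $\mathbb{X}$, which is precisely weak lower semicontinuity of $\widehat{\Phi}_{\mathtt{A}}$. (Alternatively, if $u_{n}\rightharpoonup u$ in $\mathbb{X}$, the compact embedding $\mathbb{X}\hookrightarrow L^{2}(\Omega)$ forces $u_{n}\to u$ a.e.\ along a subsequence, hence $\mathcal{G}_{s}u_{n}\to\mathcal{G}_{s}u$ a.e.\ on $\Omega^{2}$, and Fatou's lemma on the nonnegative integrand gives $\liminf_{n}\widehat{\Phi}_{\mathtt{A}}(u_{n})\ge\widehat{\Phi}_{\mathtt{A}}(u)$.) The main obstacle is the norm-continuity of $\mathcal{A}:\mathbb{X}\to\mathbb{X}'$ in step (b): Proposition \ref{monotone} only supplies hemicontinuity, which is not enough for $\mathscr{C}^{1}$ regularity, so one genuinely needs the continuity of this vector Nemytskii operator between the two Musielak--Orlicz spaces; everything else reduces to the variable-exponent Hölder inequality and the norm--modular relation already recorded in Lemma \ref{holder} and Proposition \ref{lw0}.
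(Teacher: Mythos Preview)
The paper does not actually prove this lemma: at the start of Subsection~\ref{sec32} it says ``the proof of these results is similar to \cite{elardjh} and will be omitted.'' Your proposal is therefore not being compared against an explicit argument in the text, but it is correct and follows exactly the standard route one would expect (and which is carried out in \cite{elardjh} for $\mathtt{A}\equiv1$): G\^ateaux differentiability by dominated convergence, upgrade to Fr\'echet $\mathscr{C}^{1}$ by norm-continuity of the Nemytskii map $g\mapsto|g|^{\wp(\cdot,\cdot)-2}g$ from $L^{\wp(\cdot,\cdot)}(\Omega^{2},d\mu)$ to $L^{\wp'(\cdot,\cdot)}(\Omega^{2},d\mu)$, and weak lower semicontinuity from convexity plus strong continuity.

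One purely notational point: you use the letter $t$ both for the differentiation parameter in $h(t)=\widehat{\Phi}_{\mathtt{A}}(u+tv)$ and as the (fixed) time argument in $\mathtt{A}(x,y,t)$, which is a clash; rename the scalar, e.g.\ $h(\lambda)=\widehat{\Phi}_{\mathtt{A}}(u+\lambda v)$.
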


\subsection{Mosco convergence and evolution equation}\quad
\noindent\noindent

 We will make a brief review of the notion of Mosco convergence and recall the convergence result due to Attouch \cite[Theorem 3.19]{attouch2} for evolution equations governed by subdifferential operators. The Mosco convergence is defined as follows:
\begin{definition}\label{M} 
Let $H$ be a Hilbert space and denote by $\Phi(H)$ the set of all proper (i.e., $\varphi \not\equiv \infty$), lower semicontinuous and convex functionals $\varphi:H \to (-\infty,\infty]$. Let $(\varphi_{\jmath})_{\jmath\in \mathbb{N}}$ be a sequence in $\Phi(H)$ and let $\varphi \in \Phi(H)$. Then $\varphi_{\jmath} \to \varphi$ on $H$ in the sense of Mosco as $\jmath \to \infty$ if the following conditions are verifies:
\begin{itemize}
\item[\textbf{(i)}] For each $u \in \mathcal{D}(\varphi)$, there exists a sequence $(u_{\jmath})_{\jmath \in \mathbb{N}}\subset H$ such that $u_{\jmath} \to u$ strongly in $H$ and $\varphi_{\jmath}(u_{\jmath}) \to \varphi(u)$.
\item[\textbf{(ii)}] For each $u\in H$ and any sequence $(u_{\jmath})_{\jmath \in \mathbb{N}}\subset H$ such that $u_{\jmath}\rightharpoonup u$ in $H$. Then 
$$\varphi(u) \leqslant\liminf \limits_{\jmath \to \infty}\varphi_{\jmath}(u_{\jmath}).$$
\end{itemize}

 In \cite{attouch2} the author studies the limit as $\jmath \to \infty$ of solutions for the following Cauchy problems:

\begin{equation}\label{m1}
\dfrac{d u_{\wp_{\jmath}(\cdot)}}{dt}(t) + \partial\varphi_{\jmath} (u_{\wp_{\jmath}(\cdot)}(t)) \ni f_{\jmath}(t) \;\; \mbox{in} \;\; H \;\; \mbox{for} \;\; t \in (0,T),
\end{equation}

\begin{equation}\label{m2}
u_{\wp_{\jmath}(\cdot)}(0)=u^{0}_{\jmath}. 
\end{equation}
\end{definition}

\begin{proposition}\label{p1} (\cite[Theorem 3.74]{attouch2})
Let $\varphi \in \Phi(H)$ and $(\varphi_{\jmath})_{\jmath\in \mathbb{N}}\subset \Phi(H)$ such that $\varphi_{\jmath} \to \varphi$ on $H$ in the sense of Mosco as $\jmath \to \infty$. 
Furthermore, let $f \in L^{2}(0,T,H)$ and $(f_{\jmath})_{\jmath\in \mathbb{N}}\subset L^{2}(0,T,H) $ such that 

\begin{equation*}
f_{\jmath} \to f \;\; \mbox{strongly in} \;\; L^{2}(0,T,H)
\end{equation*}
and let $u^{0}_{\jmath} \in \overline{\mathcal{D}(\varphi_{\jmath})}$ and $u_0 \in \overline{\mathcal{D}(\varphi)}$ such that

\begin{equation*}
u^{0}_{\jmath} \to u_0 \;\; \mbox{strongly in} \;\; H.
\end{equation*}

\noindent Then, the sequence of solutions $(u_{\jmath})_{\jmath\in \mathbb{N}}$ of \eqref{m1}-\eqref{m2} converge to $u$ as $\jmath \to \infty$ in the following sense:

\begin{equation*}
u_{\jmath} \to u \;\; \mbox{strongly in} \;\; \mathscr{C}([0,T];H),
\end{equation*}

\begin{equation*}
\sqrt{t}\,\, \dfrac{d u_{\jmath}}{dt} \to \sqrt{t}\,\, \dfrac{du}{dt} \;\; \mbox{strongly in} \;\; L^{2}(0,T;H).
\end{equation*}
Besides, the limit $u$ is the unique solution of

\begin{equation*}
\dfrac{du}{dt}(t) + \partial\varphi (u(t)) \ni f(t) \;\; \mbox{in} \;\; H, \; t\in (0, T), \; u(0)=u_0.
\end{equation*}
Furthermore, if $\varphi_{\jmath} (u^{0}_{\jmath}) \to \varphi (u_0) < \infty$, then
$$
u_{\jmath} \to u \;\; \mbox{strongly in} \;\; W^{1,2}(0,T;H)\mbox{ and }
\varphi_{\jmath}(u_{\jmath}(\cdot)) \to \varphi (u(\cdot)) \;\; \mbox{uniformly on} \;\; [0,T].
$$
\end{proposition}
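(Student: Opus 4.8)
The statement is Attouch's stability theorem for gradient flows, and the plan is to reproduce its classical proof (see \cite{attouch2}); below I only describe the architecture. The pivotal fact I would invoke is that Mosco convergence $\varphi_{\jmath}\to\varphi$ is equivalent to the pointwise (on $H$) convergence of the resolvents $(I+\lambda\partial\varphi_{\jmath})^{-1}\to(I+\lambda\partial\varphi)^{-1}$ for every $\lambda>0$, equivalently to the pointwise convergence of the Moreau--Yosida regularizations $\varphi_{\jmath}^{\lambda}\to\varphi^{\lambda}$ and of their globally Lipschitz gradients $\nabla\varphi_{\jmath}^{\lambda}\to\nabla\varphi^{\lambda}$. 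Since each $\partial\varphi_{\jmath}$ and $\partial\varphi$ is maximal monotone, the Cauchy problems \eqref{m1}--\eqref{m2} and their limit are uniquely solvable by the Br\'ezis--Komura theory, and the solutions enjoy the smoothing property $\sqrt{t}\,du_{\jmath}/dt\in L^{2}(0,T;H)$ even though $\varphi_{\jmath}(u^{0}_{\jmath})$ need not be finite.

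\textbf{Uniform a priori estimates.} Using the chain rule $\tfrac{d}{dt}\varphi_{\jmath}(u_{\jmath}(t))=(f_{\jmath}(t)-\tfrac{du_{\jmath}}{dt}(t),\tfrac{du_{\jmath}}{dt}(t))_{H}$ valid along the flow and testing \eqref{m1} with $t\,du_{\jmath}/dt$, together with the identity $t\tfrac{d}{dt}\varphi_{\jmath}(u_{\jmath})=\tfrac{d}{dt}\big(t\,\varphi_{\jmath}(u_{\jmath})\big)-\varphi_{\jmath}(u_{\jmath})$, I would bound $\sqrt{t}\,du_{\jmath}/dt$ in $L^{2}(0,T;H)$ and $t\mapsto t\,\varphi_{\jmath}(u_{\jmath}(t))$ in $L^{\infty}(0,T)$; a coarse bound on $\int_{0}^{T}\varphi_{\jmath}(u_{\jmath})\,dt$ and on $\|u_{\jmath}\|_{\mathscr{C}([0,T];H)}$ comes from comparing $u_{\jmath}$ with a fixed competitor furnished by Mosco condition (i), followed by a Gronwall argument. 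All bounds are uniform in $\jmath$ because $f_{\jmath}\to f$ in $L^{2}(0,T;H)$ and $u^{0}_{\jmath}\to u_{0}$ in $H$. From these one also reads off equicontinuity of $\{u_{\jmath}\}$ on every interval $[\delta,T]$, $\delta>0$ (Cauchy--Schwarz against the weight $1/\sqrt{t}$), the behaviour at $t=0$ being controlled by $u^{0}_{\jmath}\to u_{0}$.

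\textbf{Passage to the limit.} For fixed $\lambda>0$ I would pass to the limit first in the regularized flows $du^{\lambda}_{\jmath}/dt+\nabla\varphi^{\lambda}_{\jmath}(u^{\lambda}_{\jmath})=f_{\jmath}$, $u^{\lambda}_{\jmath}(0)=u^{0}_{\jmath}$: since $\nabla\varphi^{\lambda}_{\jmath}$ is $1/\lambda$-Lipschitz and converges to $\nabla\varphi^{\lambda}$ locally uniformly, a Gronwall estimate on $\|u^{\lambda}_{\jmath}-u^{\lambda}\|^{2}$ yields $u^{\lambda}_{\jmath}\to u^{\lambda}$ in $\mathscr{C}([0,T];H)$ for each $\lambda$. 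Br\'ezis's quantitative bound $\sup_{[0,T]}\|u^{\lambda}_{\jmath}-u_{\jmath}\|\le C\sqrt{\lambda}$, with $C$ independent of $\jmath$ by the a priori estimates (and the analogous bound for $u^{\lambda}$ versus $u$), then closes a $3\varepsilon$-argument giving $u_{\jmath}\to u$ strongly in $\mathscr{C}([0,T];H)$. To identify the limit, set $\eta_{\jmath}:=f_{\jmath}-du_{\jmath}/dt\in\partial\varphi_{\jmath}(u_{\jmath})$; the a priori bound makes $\sqrt{t}\,\eta_{\jmath}$ bounded in $L^{2}$, so along a subsequence $\sqrt{t}\,\eta_{\jmath}\rightharpoonup\sqrt{t}\,\eta$ with $\eta=f-du/dt$, and passing to the limit in $\varphi_{\jmath}(v_{\jmath})\ge\varphi_{\jmath}(u_{\jmath})+(\eta_{\jmath},v_{\jmath}-u_{\jmath})_{H}$ along recovery sequences $v_{\jmath}\to v$ with $\varphi_{\jmath}(v_{\jmath})\to\varphi(v)$ (Mosco (i)), while using $\varphi(u)\le\liminf_{\jmath}\varphi_{\jmath}(u_{\jmath})$ (Mosco (ii)), shows $\eta(t)\in\partial\varphi(u(t))$ for a.a. $t$; hence $u$ solves $du/dt+\partial\varphi(u)\ni f$, $u(0)=u_{0}$, whose uniqueness promotes the convergence to the full sequence. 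Finally, passing to the limit in the energy equality from the a priori step and comparing with the weak lower semicontinuity of the $L^{2}$-norm gives $\sqrt{t}\,du_{\jmath}/dt\to\sqrt{t}\,du/dt$ strongly in $L^{2}(0,T;H)$.

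\textbf{The regular-data case, and the main difficulty.} If moreover $\varphi_{\jmath}(u^{0}_{\jmath})\to\varphi(u_{0})<\infty$, testing \eqref{m1} with $du_{\jmath}/dt$ (with no weight) and integrating over $(0,T)$ produces bounds on $du_{\jmath}/dt$ in $L^{2}(0,T;H)$ and on $\varphi_{\jmath}(u_{\jmath}(\cdot))$ in $L^{\infty}(0,T)$, uniform in $\jmath$ precisely because $\varphi_{\jmath}(u^{0}_{\jmath})$ is now controlled; the same lower-semicontinuity-versus-energy-equality comparison then yields $du_{\jmath}/dt\to du/dt$ strongly in $L^{2}(0,T;H)$, hence $u_{\jmath}\to u$ in $W^{1,2}(0,T;H)$, and $\varphi_{\jmath}(u_{\jmath}(\cdot))\to\varphi(u(\cdot))$ uniformly on $[0,T]$. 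I expect the genuine obstacle to be the simultaneous passage to the limit in the dynamics and in the monotone, possibly multivalued, term $\partial\varphi_{\jmath}(u_{\jmath})$; this is exactly what forces the hypothesis to be two-sided Mosco convergence rather than mere pointwise convergence of the functionals, condition (i) being needed to manufacture competitors whose energies converge to $\varphi(u)$ and condition (ii) to keep the energy lower semicontinuous under the only (weak) compactness available for $\eta_{\jmath}$ and $du_{\jmath}/dt$.
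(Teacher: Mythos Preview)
Your sketch is a faithful outline of the classical proof of Attouch's stability theorem, and the architecture you describe (Yosida regularization, pointwise resolvent convergence from Mosco convergence, the $3\varepsilon$-argument via the uniform bound $\|u_{\jmath}^{\lambda}-u_{\jmath}\|\le C\sqrt{\lambda}$, energy equality versus weak lower semicontinuity to upgrade to strong convergence of derivatives) is correct.

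However, there is nothing to compare: the paper does \emph{not} prove this proposition. It is quoted verbatim as \cite[Theorem~3.74]{attouch2} and used as a black box in the proofs of Theorems~\ref{t1} and~\ref{cs22}. The authors' contribution lies in verifying the hypotheses of this proposition in their setting---chiefly the Mosco convergence of the fractional energies $\Phi_{\jmath}$ (Propositions~\ref{p2} and~\ref{p3})---not in reproving Attouch's result. So your proposal is not wrong, but it answers a question the paper does not ask.
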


\section{Application }\label{para}
In this section, we will talk about evolution equations associated with the Cauchy problem and periodic problems for the constant exponent.

 Consider the following evolution equation \eqref{2.1} in a Hilbert space $H$

\begin{equation}\tag{$\mathcal{E}$}\label{2.1}
\dfrac{du}{dt}(t) + \partial\varphi^{t}u(t) \ni f(t) \quad \textrm{ in } H, \; 0 < t < T,
\end{equation}
where $f \in L^1(0,T;H)$ and $\partial \varphi^{t}$ is the subdifferential of a proper lower semicontinuous convex functional $\varphi^{t} : H \to (-\infty,+\infty]$ for every $t \in [0,T]$.

\begin{definition}\label{def 2.1}
A function $u \in \mathscr{C}([0,T];H)$ is said a strong solution of \eqref{2.1} if verify:
\begin{itemize}
\item[(i)] $u$ is an $H$-valued absolutely continuous function on $[0,T]$.
\item[(ii)] $u(t) \in D(\partial\varphi^{t})$ for a.e. $t \in (0,T)$ and there exists a section $g(t) \in \partial\varphi^{t}(u(t))$ such that
\begin{equation*}\label{2.5}
\dfrac{du}{dt}(t) + g(t) = f(t) \quad \textrm{in} \; H \; \textrm{for a.e.} \; t \in (0,T).
\end{equation*}
\end{itemize}
\end{definition}
Besides, a function $u \in\mathscr{C}([0,T];H)$ is said a weak solution of \eqref{2.1} if there exist sequences $(f_j)_{j\in \mathbb{N}} \subset L^1(0,T;H)$ and $(u_j)_{j\in \mathbb{N}} \subset \mathscr{C}([0,T];H)$ such that $u_j$ is a strong solution of $(\mathcal{E}_{j})$, $f_j \to f$ strongly in $L^1(0,T;H)$ and $u_j \to u$ strongly in $\mathscr{C}([0,T];H)$.

 Hereinafter, we write $(\varphi^{t} )_{t \in [0,T]} \in \Gamma(\alpha,\beta)$ for some functions $\alpha,\beta : [0,+\infty) \times [0,T] \to \mathbb{R}$ if verify:
\begin{itemize}
\item[(i)] $\varphi^{t} \in \Psi(H)$ for all $t \in [0,T]$ (see Definition \ref{M}).
\item[(ii)] There exists $\delta > 0$ such that for all $t_{0} \in [0,T]$ and all $u_{0} \in D(\varphi^{t_0})$, there exists a function $u:J_{\delta}(t_{0})\to H,$ where
$J_{\delta}(t_{0}) := [t_0-\delta,t_0+\delta] \cap [0,T]$ such that for all $t \in J_{\delta}(t_{0})$ and all $\varrho \geqslant |u_{0}|_{H}$,
\begin{equation*}\label{2.8}
\begin{split}
|u(t)-u_{0}|_{H} &\leqslant |\alpha(\varrho,t)-\alpha(\varrho,t_{0})| \sqrt{|\varphi^{t_{0}}(u_{0})|+1 }\\
\varphi^{t}(u(t)) &\leqslant \varphi^{t_{0}}(u_{0}) + |\beta(\varrho,t)-\beta(\varrho,t_{0})| ( |\varphi^{t_{0}}(u_{0})|+1).
\end{split}
\end{equation*}
\end{itemize}

Furthermore, we say $( \varphi^{t} )_{t \in [0,T]} \in \mathcal{B}(\alpha,\beta,C_0,\lbrace \mathfrak{C}_{\varrho} \rbrace_{\varrho \geqslant 0})$ for some functions $\alpha, \beta : [0,+\infty) \times [0,T] \to \mathbb{R}$ and constant $C_0$, $\{ \mathfrak{C}_{\varrho} \}_{\varrho \geqslant 0}$ if the following are all satisfied.
\begin{itemize}
\item[(i)] $(\varphi^t)_{t \in [0,T]} \in \Gamma(\alpha,\beta)$.
\item[(ii)] $\varphi^t(u) \geqslant -C_0 (|u|_H + 1)$ for all $u \in H$ and all $t \in [0,T]$.
\item[(iii)] There exists a function $h : [0,T] \to H$ such that
\begin{equation}\label{2.9}
\sup_{t \in [0,T]} \lbrace |h(t)|_H + |\varphi^t(h(t))| \rbrace + \left( \int_{0}^{T} \left| \dfrac{dh}{dt}(t) \right|^{2}_{H}\; dt \right)^{1/2} \leqslant C_0.
\end{equation}
\item[(iv)] For every $\varrho \in [0,+\infty)$, it follows that
\begin{equation}\label{2.10}
\int_{0}^{T} |\alpha'(\varrho,t)|^{2}\; dt + \int_{0}^{T} |\beta'(\varrho,t)|\; dt \leqslant \mathfrak{C}_{\varrho},
\end{equation}
where $\alpha'$ and $\beta'$ denote $\partial_{t}\alpha $ and $\partial_{t}\beta$, respectively.
\end{itemize}

Let $( \varphi^{t} )_{t \in [0,T]} \in \Gamma(\alpha,\beta)$ be such that $\alpha(\varrho,\cdot) \in W^{1,2}(0,T)$ and $\beta(\varrho,\cdot) \in W^{1,1}(0,T)$ for all $\varrho \in [0,+\infty)$ and introduce the functional $\Phi^{\Lambda}$ defined on $\mathfrak{H}_{\Lambda} := L^2(0,\Lambda;H)$ for any $\Lambda \in(0,T]$:
\begin{equation}\label{2.11}
\Phi^{\Lambda}(u) := 
\begin{cases}
\displaystyle\int_{0}^{\Lambda} \varphi^{t}(u(t))\; dt \quad & \textrm{ if the function } \; t \longmapsto \varphi^{t}(u(t)) \in L^{1}(0,\Lambda), \\
+\infty & \textrm{otherwise}.
\end{cases}
\end{equation}
Then we have that $\Phi^{\Lambda} \in \Psi(\mathfrak{H}_{\Lambda})$. Moreover, \cite[Proposition 1.1]{ken2} implies that for any $u, f \in \mathfrak{H}_{\Lambda}$,

\begin{equation*}\label{2.12}
f \in \partial_{\mathfrak{H}_{\Lambda}} \Phi^{\Lambda}(u) \Longleftrightarrow f(t) \in \partial \varphi^{t}(u(t)) \; \textrm{for a.e.} \; t \in (0,\Lambda).
\end{equation*}

The following result it is very useful to study the convergence of strong solutions $u_n$ for $(\mathcal{E}_{n})$ as $n \to +\infty$. For its proof, we refer to \cite[Proposition 2.7.1]{ken1}.

\begin{proposition}\label{prop 2.4}
For every $j \in \mathbb{N}$, let $( \varphi^{t}_{j} )_{t \in [0,T]} \in \mathcal{B}(\alpha_j, \beta_j, C_0, (\mathfrak{C}_{\varrho} )_{\varrho \geqslant 0})$ and $( \varphi^{t})_{t \in [0,T]} \in \Gamma(\alpha,\beta)$ be such that $\alpha_n(\varrho,\cdot),\alpha(\varrho,\cdot) \in W^{1,2}(0,T)$ and $\beta_j(\varrho,\cdot),\beta(\varrho,\cdot) \in W^{1,1}(0,T)$ for every $\varrho \in [0,+\infty)$. Assume that $\varphi^{t}_{j} \to \varphi^{t}$ on $H$ in the sense of Mosco for every $t \in [0,T]$ as $j \to +\infty$. Then for any $\Lambda \in (0,T]$, we have that
\begin{enumerate}
\item[$\bullet$] For each $u \in D(\Phi^{\Lambda})$, there exists a sequence $(u_j)_{j \in\mathbb{N}}$ in $\mathfrak{H}_{\Lambda}$ such that $u_j \to u$ strongly in $\mathfrak{H}_{\Lambda}$ and $\Phi^{\Lambda}_{j}(u_j) \to \Phi^{\Lambda}(u)$, where $\Phi^{\Lambda}_{j}$ is defined by \eqref{2.11} with $\varphi^{t}$ substitute by $\varphi^{t}_{j}.$
\item[$\bullet$] Let $(u_j)_{j\in \mathbb{N}}$ be a sequence in $\mathfrak{H}_{\Lambda}$ such that $(u_j)_{j\in \mathbb{N}}$ is bounded in $L^{\infty}(0,\Lambda;H)$ and $u_j(t) \rightharpoonup u(t)$ in $H$ for a.e. $t \in (0,\Lambda)$ as $j \to +\infty$ and let $(k_j)_{j\in \mathbb{N}}$ be a subsequence of $(k)_{k\in \mathbb{N}}$. Then $\Phi^{\Lambda}(u)\leqslant \liminf\limits_{j \to +\infty} \Phi^{\Lambda}_{k_j}(u_{j}).$
\end{enumerate}
\end{proposition}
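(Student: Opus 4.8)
The plan is to derive the two one-sided convergences for the time-integrated functionals $\Phi^{\Lambda}_{j}$, $\Phi^{\Lambda}$ on $\mathfrak{H}_{\Lambda}=L^{2}(0,\Lambda;H)$ directly from the pointwise-in-$t$ Mosco convergence $\varphi^{t}_{j}\to\varphi^{t}$, handling a general $\Lambda\in(0,T]$ exactly as $\Lambda=T$ since every estimate below uses only integrability over the finite interval $(0,\Lambda)$. The two bullets are treated independently: the $\liminf$ estimate (second bullet) will follow from Fatou's lemma once a $j$-independent integrable lower bound for the integrands is available, while the construction of a recovery sequence (first bullet) is the delicate part, because pointwise recovery selections cannot in general be chosen measurably in $t$, and will be done by Moreau--Yosida regularization.

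For the second bullet, I would first observe that it suffices to treat the case in which $\liminf_{j\to\infty}\Phi^{\Lambda}_{k_{j}}(u_{j})$ is attained as a finite limit along a further subsequence (otherwise there is nothing to prove), so that $t\mapsto\varphi^{t}_{k_{j}}(u_{j}(t))\in L^{1}(0,\Lambda)$ for large $j$. For a.e.\ $t$ one has $u_{j}(t)\rightharpoonup u(t)$ in $H$, and since $(\varphi^{t}_{k_{j}})_{j}$ is a subsequence of the Mosco-convergent sequence $(\varphi^{t}_{k})_{k}$, the weak-$\liminf$ condition of Definition \ref{M} yields $\varphi^{t}(u(t))\leqslant\liminf_{j\to\infty}\varphi^{t}_{k_{j}}(u_{j}(t))$ for a.e.\ $t$. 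Because each family $(\varphi^{t}_{k})_{t\in[0,T]}$ belongs to $\mathcal{B}(\alpha_{k},\beta_{k},C_{0},\{\mathfrak{C}_{\varrho}\})$ with $C_{0}$ independent of $k$, property (ii) of that definition gives $\varphi^{t}_{k_{j}}(v)\geqslant-C_{0}(|v|_{H}+1)$; combined with $M:=\sup_{j}\|u_{j}\|_{L^{\infty}(0,\Lambda;H)}<\infty$ this produces the constant (hence integrable on $(0,\Lambda)$) lower bound $\varphi^{t}_{k_{j}}(u_{j}(t))\geqslant-C_{0}(M+1)$. Fatou's lemma then gives
\[
\int_{0}^{\Lambda}\varphi^{t}(u(t))\,dt\leqslant\int_{0}^{\Lambda}\liminf_{j\to\infty}\varphi^{t}_{k_{j}}(u_{j}(t))\,dt\leqslant\liminf_{j\to\infty}\int_{0}^{\Lambda}\varphi^{t}_{k_{j}}(u_{j}(t))\,dt ,
\]
and since the left integrand is bounded below by a constant and above by an $L^{1}$ function, $t\mapsto\varphi^{t}(u(t))\in L^{1}(0,\Lambda)$, so the left side equals $\Phi^{\Lambda}(u)$ and the assertion follows.

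For the first bullet I would use Moreau--Yosida regularization. For a proper, convex, lower semicontinuous $\psi$ on $H$ write $J^{\psi}_{\lambda}:=(I+\lambda\partial\psi)^{-1}$ and $\psi_{\lambda}(v):=\inf_{w\in H}\{\tfrac{1}{2\lambda}|v-w|_{H}^{2}+\psi(w)\}=\tfrac{1}{2\lambda}|v-J^{\psi}_{\lambda}v|_{H}^{2}+\psi(J^{\psi}_{\lambda}v)$. Given $u\in D(\Phi^{\Lambda})$, so $u(t)\in D(\varphi^{t})$ for a.e.\ $t$ and $t\mapsto\varphi^{t}(u(t))\in L^{1}(0,\Lambda)$, set $u^{\lambda}(t):=J^{\varphi^{t}}_{\lambda}u(t)$ and $u^{\lambda}_{j}(t):=J^{\varphi^{t}_{j}}_{\lambda}u(t)$. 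I would invoke two classical facts (see \cite{attouch}): (a) for a.e.\ $t$, $u^{\lambda}(t)\to u(t)$ in $H$ and $(\varphi^{t})_{\lambda}(u(t))\uparrow\varphi^{t}(u(t))$ as $\lambda\downarrow0$; (b) for a.e.\ $t$ and fixed $\lambda>0$, the Mosco convergence $\varphi^{t}_{j}\to\varphi^{t}$ gives $u^{\lambda}_{j}(t)\to u^{\lambda}(t)$ in $H$ and $(\varphi^{t}_{j})_{\lambda}(u(t))\to(\varphi^{t})_{\lambda}(u(t))$ as $j\to\infty$, whence $\varphi^{t}_{j}(u^{\lambda}_{j}(t))=(\varphi^{t}_{j})_{\lambda}(u(t))-\tfrac{1}{2\lambda}|u(t)-u^{\lambda}_{j}(t)|_{H}^{2}\to\varphi^{t}(u^{\lambda}(t))$. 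Exploiting the uniform lower bound, the uniformly bounded reference curves $h_{j}$ of condition (iii), and the uniform integrability (iv) built into $\mathcal{B}(\alpha_{j},\beta_{j},C_{0},\{\mathfrak{C}_{\varrho}\})$, one obtains majorants independent of $j$ and $\lambda$ which upgrade these pointwise-in-$t$ statements to: for fixed $\lambda$, $u^{\lambda}_{j}\to u^{\lambda}$ in $\mathfrak{H}_{\Lambda}$ and $\Phi^{\Lambda}_{j}(u^{\lambda}_{j})\to\int_{0}^{\Lambda}\varphi^{t}(u^{\lambda}(t))\,dt$ as $j\to\infty$; and $u^{\lambda}\to u$ in $\mathfrak{H}_{\Lambda}$ with $\int_{0}^{\Lambda}\varphi^{t}(u^{\lambda}(t))\,dt\to\Phi^{\Lambda}(u)$ as $\lambda\downarrow0$ (monotone convergence). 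A diagonal choice $\lambda=\lambda_{j}\downarrow0$ then yields $u_{j}:=u^{\lambda_{j}}_{j}$ with $u_{j}\to u$ strongly in $\mathfrak{H}_{\Lambda}$ and $\Phi^{\Lambda}_{j}(u_{j})\to\Phi^{\Lambda}(u)$, as required.

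The main obstacle is the measurability-and-domination bookkeeping underlying the previous paragraph: one must verify that $t\mapsto J^{\varphi^{t}_{j}}_{\lambda}v$ and $t\mapsto(\varphi^{t}_{j})_{\lambda}(u(t))$ are strongly measurable — this is precisely where the $\Gamma(\alpha_{j},\beta_{j})$-structure of each family $(\varphi^{t}_{j})_{t}$ is used — and that the pointwise-in-$t$ convergences can be integrated against majorants independent of $j$ and $\lambda$, which is exactly what conditions (ii)--(iv) in the definition of $\mathcal{B}$ are designed to guarantee. Once the $t$-dependence is controlled in this way, the remaining skeleton — Fatou for the $\liminf$, resolvent regularization plus diagonalization for the recovery sequence — is the standard route to Mosco convergence and runs parallel to the autonomous statement recalled in Proposition \ref{p1}.
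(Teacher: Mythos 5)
The paper does not actually prove this proposition; it is quoted verbatim from Kenmochi's 1981 lecture notes (\cite[Proposition 2.7.1]{ken1}), so there is no in-paper argument to compare your reconstruction against. On its own terms, your argument follows the standard route and the structure is sound. The Fatou argument for the second bullet is correct and essentially complete: the hypotheses are engineered exactly so that the constant $-C_0(M+1)$ from condition (ii) of $\mathcal{B}$ is a $j$-independent integrable minorant, and since the Mosco liminf condition passes to subsequences, the pointwise inequality $\varphi^t(u(t))\leqslant\liminf_j\varphi^t_{k_j}(u_j(t))$ is legitimate with the index pairing $(\varphi^t_{k_j},u_j)$. For the first bullet, the Moreau--Yosida regularization plus diagonalization is the right skeleton, and you are right that the real weight of the proof sits in the measurability of $t\mapsto J_{\lambda}^{\varphi^{t}_{j}}u(t)$ and in the $j$- and $\lambda$-uniform domination: the $\Gamma(\alpha_j,\beta_j)$ structure is what makes the resolvent measurable in $t$ (this is the content of Attouch's measurability theory for families of monotone operators), and conditions (ii)--(iv) of $\mathcal{B}$ supply the dominating functions, via $(\varphi^{t}_{j})_{\lambda}(u(t))\leqslant\tfrac{1}{2\lambda}|u(t)-h_j(t)|_H^2+\varphi^{t}_{j}(h_j(t))$ together with the uniform bound \eqref{2.9} on $h_j$. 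One small slip: the passage $\int_0^{\Lambda}\varphi^{t}(u^{\lambda}(t))\,dt\to\Phi^{\Lambda}(u)$ as $\lambda\downarrow 0$ is not by monotone convergence, since $\lambda\mapsto\varphi^{t}(u^{\lambda}(t))$ is not monotone; what is monotone is the Moreau envelope $(\varphi^{t})_{\lambda}(u(t))$. The correct appeal is dominated convergence, squeezing with $\varphi^{t}(u^{\lambda}(t))\leqslant(\varphi^{t})_{\lambda}(u(t))\leqslant\varphi^{t}(u(t))\in L^{1}(0,\Lambda)$ from above, lower semicontinuity to force $\liminf_{\lambda}\varphi^{t}(u^{\lambda}(t))\geqslant\varphi^{t}(u(t))$, and the lower bound $\varphi^{t}\geqslant -C_0(|\cdot|_H+1)$ from below, which is inherited by the Mosco limit from the $\varphi^{t}_{j}$. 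With that correction the argument is sound.
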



\subsection{Cauchy Problem.} We consider the following Cauchy problem in a Hilbert space $H$

\begin{equation}\tag{$\mathscr{C}_{\varphi^{t},f,u_{0}}$}\label{2.130} 
\begin{cases}
\dfrac{du}{dt}(t) + \partial \varphi^t(u(t)) \ni f(t)  \quad & \textrm{ in } H, \; 0 < t < T, \\
u(0) = u_0,
\end{cases}
\end{equation}

where $\varphi^{t} \in \Psi(H)$ for all $t \in [0,T]$, $f \in L^1(0,T;H)$ and $u_0 \in H$.

We give a definition of solutions for \eqref{2.130} as follows.

\begin{definition}\label{def 2.5}
A function $u \in \mathscr{C}([0,T];H)$ is said a strong (resp. weak) solution of \eqref{2.130} if $u$ is a strong (resp. weak) solution of \eqref{2.1} such that $u(t) \to u_0$ strongly in $H$ as $t \to 0^{+}$.
\end{definition}

Regarding the existence of solutions for \eqref{2.130}, we have the following result.

\begin{theorem}(\cite[Theorem 2.6]{ken1})\label{teo 2.6}
Let $(\varphi^{t})_{t \in [0,T]} \in \Gamma(\alpha,\beta)$ be such that $\alpha(\varrho,\cdot) \in W^{1,2}(0,T)$ and $\beta(\varrho,\cdot) \in W^{1,1}(0,T)$ for every $\varrho \in [0,+\infty)$. Then for all $f \in L^1(0,T;H)$ and $u_0 \in \overline{D(\varphi^{0})}^{H}$, the problem \eqref{2.130} has a unique weak solution $u$ such that the function $t \mapsto \varphi^{t}(u(t))$ is the integrable on $(0,T)$. In particular, if $f \in L^2(0,T;H)$, then the weak solution $u$ satisfies
\begin{equation*}\label{2.14}
\sqrt{t}\dfrac{du}{dt} \in L^2(0,T;H), \quad \sup_{t \in [0,T]} t \varphi^{t}(u(t)) < +\infty.
\end{equation*}
Besides, if $f \in L^2(0,T;H)$ and $u_0 \in D(\varphi^{0})$, then the unique weak solution $u$ becomes a strong solution of $(\mathscr{C}_{\varphi^{t},f,u_0})$ such that
\begin{equation*}\label{2.15}\dfrac{du}{dt} \in L^2(0,T;H), \quad \sup_{t \in [0,T]} \varphi^{t}(u(t)) < +\infty.
\end{equation*}
\end{theorem}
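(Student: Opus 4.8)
The plan is to regard Theorem~\ref{teo 2.6} as an instance of the classical theory of evolution equations governed by time-dependent subdifferentials (Kenmochi) and to proceed by descent in generality: first establish the statement under the strongest hypotheses $f\in L^{2}(0,T;H)$, $u_{0}\in D(\varphi^{0})$, producing a \emph{strong} solution; then relax $u_{0}$ to $\overline{D(\varphi^{0})}^{H}$ while keeping $f\in L^{2}$, producing a weak solution with the weighted smoothing bounds; and finally relax $f$ to $L^{1}(0,T;H)$. Uniqueness and continuous dependence on the data, which glue the three stages, both come from one elementary monotonicity estimate, so the whole difficulty is concentrated in the first, quantitative, step.

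First I would treat $f\in L^{2}(0,T;H)$, $u_{0}\in D(\varphi^{0})$ by time discretization. For $N\in\mathbb{N}$, put $h=T/N$, $t_{k}=kh$, $f_{k}=h^{-1}\int_{t_{k-1}}^{t_{k}}f$, and solve the implicit Euler scheme $h^{-1}(u_{k}-u_{k-1})+\partial\varphi^{t_{k}}(u_{k})\ni f_{k}$, i.e.\ $u_{k}=(I+h\,\partial\varphi^{t_{k}})^{-1}(u_{k-1}+hf_{k})$, which is well defined because the resolvent of a subdifferential is a contraction defined on all of $H$. Testing the scheme against $u_{k}-\widetilde{u}_{k}$, where $\widetilde{u}_{k}$ is the image of $u_{k-1}$ under the path supplied by the $\Gamma(\alpha,\beta)$-condition between $t_{k-1}$ and $t_{k}$, using the subdifferential inequality for $\varphi^{t_{k}}$, and — this is the crucial point — controlling the energy increment $\varphi^{t_{k}}(\widetilde{u}_{k})-\varphi^{t_{k-1}}(u_{k-1})$ and the displacement $|\widetilde{u}_{k}-u_{k-1}|_{H}$ through $|\beta(\varrho,t_{k})-\beta(\varrho,t_{k-1})|$ and $|\alpha(\varrho,t_{k})-\alpha(\varrho,t_{k-1})|$ respectively (for a $\varrho$ fixed once a crude uniform bound on $|u_{k}|_{H}$ is in hand), a discrete Gronwall argument exploiting $\alpha(\varrho,\cdot)\in W^{1,2}(0,T)$ and $\beta(\varrho,\cdot)\in W^{1,1}(0,T)$ yields
\begin{equation*}
\sum_{k=1}^{N}\frac{1}{h}\,|u_{k}-u_{k-1}|_{H}^{2}+\max_{1\le k\le N}\varphi^{t_{k}}(u_{k})\le C\big(|u_{0}|_{H},\varphi^{0}(u_{0}),\|f\|_{L^{2}(0,T;H)}\big).
\end{equation*}
With the piecewise-linear interpolant $\widehat{u}_{N}$ and the piecewise-constant interpolant $\overline{u}_{N}$, this bounds $\widehat{u}_{N}$ in $W^{1,2}(0,T;H)$ and $\{\varphi^{t}(\overline{u}_{N}(t))\}_{N}$ in $L^{\infty}(0,T)$, so along a subsequence $\widehat{u}_{N}\rightharpoonup u$ in $W^{1,2}(0,T;H)$, $\widehat{u}_{N}\to u$ in $\mathscr{C}([0,T];H)$, $\overline{u}_{N}\to u$ as well, and $u(0)=u_{0}$.

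To identify the limit I would not invoke demiclosedness of $t\mapsto\partial\varphi^{t}$ directly but use the integrated functional $\Phi^{\Lambda}$ of \eqref{2.11}: the discrete scheme says $f_{N}-\dot{\widehat{u}}_{N}\in\partial_{\mathfrak{H}_{\Lambda}}\Phi^{\Lambda}(\overline{u}_{N})$ up to controllable errors, and a Minty/lower-semicontinuity argument for $\Phi^{\Lambda}$ along the convergences above gives $f-\dot{u}\in\partial_{\mathfrak{H}_{\Lambda}}\Phi^{\Lambda}(u)$, which by the pointwise characterization recalled just before Proposition~\ref{prop 2.4} means $f(t)-\dot{u}(t)\in\partial\varphi^{t}(u(t))$ for a.e.\ $t$; hence $u$ is a strong solution. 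Uniqueness is immediate: for two solutions $u_{1},u_{2}$ with sections $g_{i}\in\partial\varphi^{t}(u_{i})$, $\tfrac{1}{2}\frac{d}{dt}|u_{1}-u_{2}|_{H}^{2}=-(g_{1}-g_{2},u_{1}-u_{2})_{H}\le 0$ by monotonicity, whence the contraction $\|u_{1}-u_{2}\|_{\mathscr{C}([0,T];H)}\le|u_{1}(0)-u_{2}(0)|_{H}$; this same contraction is what upgrades ``strong solution for regular data'' to ``weak solution for general data''.

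Finally, for $f\in L^{2}$, $u_{0}\in\overline{D(\varphi^{0})}^{H}$ I would take $u_{0}^{j}\in D(\varphi^{0})$ with $u_{0}^{j}\to u_{0}$, let $u^{j}$ be the corresponding strong solutions, and pass to a limit $u$ uniformly on $[0,T]$ by the contraction estimate; the bounds $\sqrt{t}\,\dot{u}\in L^{2}(0,T;H)$ and $\sup_{t}t\,\varphi^{t}(u(t))<\infty$ then follow by multiplying the equation by $t\,\dot{u}(t)$ and integrating, the $t$-weight compensating the loss of initial regularity (again via $\alpha(\varrho,\cdot)\in W^{1,2}$). For $f\in L^{1}$ one approximates $f$ by $f_{j}\in L^{2}$ and passes to the limit with the same contraction. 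The main obstacle is precisely the discrete a priori estimate displayed above: rendering the $t$-dependence of $\varphi^{t}$ harmless requires inserting the quantitative $\Gamma(\alpha,\beta)$ inequalities with the right comparison point $\widetilde{u}_{k}$ into the energy identity and then absorbing them by Gronwall — solvability of each Euler step, compactness, uniqueness, and the two density arguments are all routine. Since the statement coincides with \cite[Theorem 2.6]{ken1}, in the paper it is enough to cite that reference, the above merely indicating the mechanism behind it.
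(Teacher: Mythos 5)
The paper supplies no proof of this theorem; it is a background result cited verbatim from Kenmochi \cite[Theorem~2.6]{ken1}, and your sketch correctly reproduces the standard Kenmochi mechanism — implicit Euler discretization tested against the $\Gamma(\alpha,\beta)$-comparison point $\widetilde{u}_k$, a discrete Gronwall estimate absorbing the time-dependence via $\alpha(\varrho,\cdot)\in W^{1,2}$ and $\beta(\varrho,\cdot)\in W^{1,1}$, weak compactness of the interpolants, identification of the limit by lower semicontinuity/Minty, the contraction estimate for uniqueness and stability, and two density steps (in $u_0$ and in $f$) with a $t$-weighted energy test for the smoothing bounds. The only stylistic deviation from the classical argument is your use of the integrated functional $\Phi^{\Lambda}$ and its pointwise subdifferential characterization to identify the limit rather than a direct demiclosedness argument for $t\mapsto\partial\varphi^{t}$, but this is an equivalent route and does not change the substance.
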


Taking into account Proposition \ref{prop 2.4}, in \cite{akagi}, the authors proving  the following result on the convergence of solutions $u_j$ for $(\mathscr{C}_{\varphi^{t}_{j},f_{j},u_{0,j}})$ as $j \to +\infty$. Its proof can be found in \cite[Theorem 2.7.1]{ken1}.

\begin{theorem}(\cite[Theorem 2.7]{ken1})\label{teo 2.7}
Under the same hypotheses as in Proposition \ref{prop 2.4}. Let $(f_j)_{j\in \mathbb{N}}$ and $(u_{0,j})_{j\in \mathbb{N}}$ be sequences in $L^2(0,T;H)$ and $\overline{D(\varphi^{0}_{j})}^{H}$, respectively, such that $f_j \to f$ strongly in $L^2(0,T;H)$ and $u_{0,j} \to u_0 \in \overline{D(\varphi^0)}^{H}$ strongly in $H$. Then the unique weak solution $u_n$ of $(\mathscr{C}_{\varphi^{t}_{j},f_j,u_{0,j}})$ converges to $u$ in the following sense:
\begin{equation*}\label{2.16}
u_j \to u \quad \textrm{strongly in} \; \mathscr{C}([0,T];H)
\end{equation*}
and the limit $u$ becomes the unique weak solution of \eqref{2.130}. Furthermore,
\begin{equation*}\label{2.17}
\int_{0}^{T} \varphi^{t}_{j}(u_j(t))\; dt \to  \int_{0}^{T} \varphi^{t}(u(t))\; dt.
\end{equation*}
In particular, if $(\varphi^{0}_{j}(u_{0,j}))_{j\in \mathbb{N}}$ is bounded for all $j \in \mathbb{N}$, then the limit $u$ becomes a strong solution of \eqref{2.130}.
\end{theorem}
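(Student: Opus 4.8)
This is the time-dependent counterpart of Proposition~\ref{p1}, and the plan is to reduce the Cauchy problem to a single subdifferential inclusion on $\mathfrak{H}_{\Lambda}=L^{2}(0,\Lambda;H)$ and then transplant there the Mosco-convergence machinery of Proposition~\ref{prop 2.4}. First I would establish a priori estimates on the weak solutions $u_{j}$ of $(\mathscr{C}_{\varphi^{t}_{j},f_{j},u_{0,j}})$ that are uniform in $j$. This is exactly where it matters that, under the hypotheses of Proposition~\ref{prop 2.4}, the constants $C_{0}$ and $\{\mathfrak{C}_{\varrho}\}_{\varrho\geqslant0}$ in $(\varphi^{t}_{j})\in\mathcal{B}(\alpha_{j},\beta_{j},C_{0},\{\mathfrak{C}_{\varrho}\})$ are the same for all $j$: reproducing the proof of Theorem~\ref{teo 2.6} with the reference curve $h$ of \eqref{2.9}, the growth bound $\varphi^{t}_{j}(u)\geqslant-C_{0}(|u|_{H}+1)$, and the bound \eqref{2.10} on $\alpha_{j}',\beta_{j}'$, one obtains
\begin{equation*}
\sup_{j}\Big(\|u_{j}\|_{L^{\infty}(0,T;H)}+\big\|\sqrt{t}\,\tfrac{du_{j}}{dt}\big\|_{L^{2}(0,T;H)}+\sup_{t\in[0,T]}t\,|\varphi^{t}_{j}(u_{j}(t))|\Big)<\infty ,
\end{equation*}
the bound depending only on $C_{0}$, $\{\mathfrak{C}_{\varrho}\}$, $\sup_{j}\|f_{j}\|_{L^{2}(0,T;H)}$ and $\sup_{j}|u_{0,j}|_{H}$; if in addition $(\varphi^{0}_{j}(u_{0,j}))_{j}$ is bounded, the same computation carried out from $t=0$ without the weight yields $\sup_{j}\big(\|\tfrac{du_{j}}{dt}\|_{L^{2}(0,T;H)}+\sup_{t}|\varphi^{t}_{j}(u_{j}(t))|\big)<\infty$.

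From these estimates I would extract a subsequence (not relabelled) with $u_{j}\overset{\ast}{\rightharpoonup}u$ in $L^{\infty}(0,T;H)$, $\tfrac{du_{j}}{dt}\rightharpoonup\tfrac{du}{dt}$ in $L^{2}(\delta,T;H)$ for every $\delta>0$, and $u_{j}(t)\rightharpoonup u(t)$ in $H$ for each $t\in[0,T]$ (pointwise weak convergence following from equicontinuity of $t\mapsto u_{j}(t)$ away from $t=0$ and from $u_{j}(0)=u_{0,j}\to u_{0}$), so that $u\in\mathscr{C}_{w}([0,T];H)$ and $u(0)=u_{0}$. To identify $u$ I would start from the inclusion for $u_{j}$ written on $\mathfrak{H}_{\Lambda}$: by the characterization recorded just before Proposition~\ref{prop 2.4} it reads $\Phi^{\Lambda}_{j}(v)-\Phi^{\Lambda}_{j}(u_{j})\geqslant\int_{0}^{\Lambda}\big(f_{j}(t)-\tfrac{du_{j}}{dt}(t),\,v(t)-u_{j}(t)\big)_{H}\,dt$ for all $v\in\mathcal{D}(\Phi^{\Lambda}_{j})$. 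Given $v\in\mathcal{D}(\Phi^{\Lambda})$, pick by the first assertion of Proposition~\ref{prop 2.4} a recovery sequence $v_{j}\to v$ strongly in $\mathfrak{H}_{\Lambda}$ with $\Phi^{\Lambda}_{j}(v_{j})\to\Phi^{\Lambda}(v)$, invoke the second assertion to get $\liminf_{j}\Phi^{\Lambda}_{j}(u_{j})\geqslant\Phi^{\Lambda}(u)$, integrate the term $\int_{0}^{\Lambda}(\tfrac{du_{j}}{dt},u_{j})_{H}\,dt=\tfrac12|u_{j}(\Lambda)|_{H}^{2}-\tfrac12|u_{0,j}|_{H}^{2}$ by parts (after the standard approximation near $t=0$, legitimate since $u_{j}\in\mathscr{C}([0,T];H)$), and let $j\to\infty$, treating the contribution $\tfrac12|u_{j}(\Lambda)|_{H}^{2}$ by weak lower semicontinuity of $|\cdot|_{H}$. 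This gives $\Phi^{\Lambda}(v)-\Phi^{\Lambda}(u)\geqslant\int_{0}^{\Lambda}\big(f(t)-\tfrac{du}{dt}(t),\,v(t)-u(t)\big)_{H}\,dt$ for all $v\in\mathcal{D}(\Phi^{\Lambda})$, i.e.\ $f-\tfrac{du}{dt}\in\partial_{\mathfrak{H}_{\Lambda}}\Phi^{\Lambda}(u)$, equivalently $f(t)-\tfrac{du}{dt}(t)\in\partial\varphi^{t}(u(t))$ for a.e.\ $t$; together with $u(0)=u_{0}$ this says $u$ is a weak solution of \eqref{2.130}. Since Theorem~\ref{teo 2.6} guarantees uniqueness for \eqref{2.130}, the convergence holds along the whole sequence.

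To conclude, taking $v=u$ in the recovery step forces $\Phi^{\Lambda}_{j}(u_{j})\to\Phi^{\Lambda}(u)$, which is precisely the asserted energy convergence $\int_{0}^{T}\varphi^{t}_{j}(u_{j}(t))\,dt\to\int_{0}^{T}\varphi^{t}(u(t))\,dt$; feeding this into the energy identities for $u_{j}$ and for $u$ — obtained by testing the respective equations against $\tfrac{du_{j}}{dt}$, $\tfrac{du}{dt}$ and using the chain rule for $t\mapsto\varphi^{t}_{j}(u_{j}(t))$, whose $\partial_{t}$-contribution is controlled through \eqref{2.10} — upgrades the weak convergence $u_{j}(t)\rightharpoonup u(t)$ to $u_{j}\to u$ strongly in $\mathscr{C}([0,T];H)$, and to $u_{j}\to u$ in $W^{1,2}(0,T;H)$ when $(\varphi^{0}_{j}(u_{0,j}))_{j}$ is bounded; in the latter case $\tfrac{du}{dt}\in L^{2}(0,T;H)$ together with the inclusion already obtained shows $u$ is a strong solution of \eqref{2.130}. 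I expect the genuine difficulty to lie in this last energy-balance step in the time-dependent setting: one must make the chain rule for $t\mapsto\varphi^{t}_{j}(u_{j}(t))$ rigorous using the regularity built into the class $\Gamma(\alpha,\beta)$ rather than a formal differentiation, track the $\partial_{t}\varphi^{t}_{j}$ terms through the limit by means of \eqref{2.10}, and handle the behaviour at $t=0$, where in the general case only the weighted control $\sqrt{t}\,\tfrac{du_{j}}{dt}\in L^{2}(0,T;H)$ is available; by contrast, passing to the limit in the subdifferential inclusion itself is fairly routine once Proposition~\ref{prop 2.4} is available.
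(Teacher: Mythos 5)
The paper does not prove this theorem: it is stated as a quoted result, and the surrounding text explicitly says ``Its proof can be found in \cite[Theorem 2.7.1]{ken1}.'' So there is no internal proof to compare your sketch against; I can only assess your sketch on its own merits. Its high-level strategy --- translate the Cauchy problem into a single subdifferential inclusion on $\mathfrak{H}_{\Lambda}=L^{2}(0,\Lambda;H)$, pass to the limit in the corresponding inequality via the recovery/liminf machinery of Proposition~\ref{prop 2.4}, identify the limit by uniqueness from Theorem~\ref{teo 2.6}, and use the energy convergence $\Phi^{\Lambda}_{j}(u_{j})\to\Phi^{\Lambda}(u)$ to upgrade weak to strong convergence --- is indeed the Attouch--Kenmochi philosophy and is the right shape of argument.

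That said, two steps you flag as ``the genuine difficulty'' but do not resolve are in fact where the proof lives, and as written your sketch does not close them. First, the pairing $\int_{0}^{\Lambda}\big(\tfrac{du_{j}}{dt},v_{j}\big)_{H}\,dt$: since $u_{0,j}\in\overline{D(\varphi^{0}_{j})}^{H}$ is not assumed to lie in $D(\varphi^{0}_{j})$, the $u_{j}$ are only weak solutions, and Theorem~\ref{teo 2.6} gives merely $\sqrt{t}\,\tfrac{du_{j}}{dt}\in L^{2}(0,T;H)$; hence $\tfrac{du_{j}}{dt}$ need not lie in $L^{2}(0,\Lambda;H)$, while the recovery sequence $v_{j}$ from Proposition~\ref{prop 2.4} is controlled only in $L^{2}(0,\Lambda;H)$, so the integrand need not be integrable near $t=0$ and the weak convergence $\tfrac{du_{j}}{dt}\rightharpoonup\tfrac{du}{dt}$ you invoke is not available on all of $(0,\Lambda)$. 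One must either run the inclusion on $[\delta,\Lambda]$ and send $\delta\to 0$ after $j\to\infty$ (controlling the boundary term $|u_{j}(\delta)|_{H}^{2}$), or first reduce to strong solutions by approximating $u_{0,j}$ from inside $D(\varphi^{0}_{j})$ and use a diagonal argument. Second, the upgrade to $\mathscr{C}([0,T];H)$ convergence: from $\sqrt{t}\,\tfrac{du_{j}}{dt}$ bounded in $L^{2}$ the modulus of continuity you can extract is $|u_{j}(t)-u_{j}(s)|_{H}\leqslant C\sqrt{\log(t/s)}$, which blows up as $s\to 0$, so the family is \emph{not} equicontinuous at $t=0$ and pointwise convergence of $u_{j}(t)$ plus convergence of the norms does not by itself give uniform convergence on $[0,T]$. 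Near $t=0$ one needs a separate comparison estimate of Brezis--Gronwall type between $u_{j}$ and the solution emanating from $u_{0,j}$ of the limiting inclusion (this is the role of the $\mathcal{B}(\alpha_{j},\beta_{j},C_{0},\{\mathfrak{C}_{\varrho}\})$ structure and is where the uniform constants $C_{0},\mathfrak{C}_{\varrho}$ truly enter), together with the strong convergence $u_{0,j}\to u_{0}$. Until those two points are carried out, the argument is a roadmap rather than a proof.
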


\subsection{Periodic problem.} We consider the following periodic problem:

\begin{equation}\tag{$\mathcal{P}_{\varphi^{t},f}$}\label{2.180} 
\begin{cases}
\dfrac{du}{dt}(t) + \partial \varphi^t(u(t)) \ni f(t)  \quad & \textrm{ in } H, \; 0 < t < T, \\
u(0) = u(T),
\end{cases}
\end{equation}



\begin{definition}\label{def 2.8}
A function $u \in \mathscr{C}([0,T];H)$ is said a strong solution of \eqref{2.180} if $u$ is a strong solution of \eqref{2.1} such that $u(0)=u(T)$.
\end{definition}

In order to obtain our results, we define the set
\begin{equation*}\label{2.19}
\Psi_{\Theta}(\alpha,\beta,C_0) := \left\lbrace
\begin{aligned}
( \varphi^{t} )_{t \in [0,T]} \in \Gamma(\alpha,\beta): 
\begin{aligned}
& |u|^{2}_{H} \leqslant C_0(\varphi^{t}(u) + 1)\; \forall u \in D(\varphi^{t}), \; \forall t \in [0,T], \\
& D(\varphi^{T}) \subset D(\varphi^{0})
\end{aligned}
\end{aligned}
\right\rbrace
\end{equation*}
for any positive constant $C_0$. Moreover, we write $( \varphi^{t} )_{t \in [0,T]} \in \mathcal{B}_{\Theta}(\alpha,\beta,C_0,\lbrace M_r \rbrace_{r \geqslant 0})$ if the following hold true.
\begin{itemize}
\item[(i)] $(\varphi^{t})_{t \in [0,T]} \in \Psi_{\Theta}(\alpha,\beta,C_0)$.
\item[(ii)] There exists a function $h : [0,T] \to H$ such that \eqref{2.9} holds and $h(0)=h(T)$.
\item[(iii)] For every $\varrho \in [0,+\infty)$, \eqref{2.10} holds.
\item[(iv)] $\varphi^{0}(u) \leqslant \varphi^{T}(u)$ for all $u \in D(\varphi^{T})$.
\end{itemize}
Then it is not difficult to prove that $\mathcal{B}_{\Theta}(\alpha,\beta,C_0,\lbrace \mathfrak{C}_{\varrho} \rbrace_{\varrho \geqslant 0}) \subset \mathcal{B}(\alpha,\beta,C_0,\lbrace \mathfrak{C}_{\varrho} \rbrace_{\varrho \geqslant 0})$.

The existence of strong solutions for \eqref{2.180} follows from the following result.

\begin{theorem}(\cite[Theorem 2.9]{ken1})\label{teo 2.9}
Let $(\varphi^{t})_{t \in [0,T]} \in \Gamma_{\Theta}(\alpha,\beta,C_0)$ be such that $\alpha(\varrho,\cdot) \in W^{1,2}(0,T)$, $\beta(\varrho,\cdot) \in W^{1,1}(0,T)$ for all $\varrho \in [0,+\infty)$. Then for all $f \in L^2(0,T;H)$, the problem \eqref{2.180} has at least one strong solution $u$ verifying
\begin{equation*}\label{2.22}
\dfrac{du}{dt} \in L^2(0,T;H), \quad \sup_{t \in [0,T]} \varphi^{t}(u(t)) < +\infty.
\end{equation*}
In particular, if $\varphi^{t}$ is strictly convex on $H$ for a.e. $t \in (0,T)$, then every strong solution of \eqref{2.180} is unique.
\end{theorem}

Now we study the convergence of strong solutions $u_j$ for $(\mathcal{P}_{\varphi_{j}^{t},f_{j}})$ when $\varphi^{t}_{j} \to \varphi^{t}$ on $H$ in the sense of Mosco and $f_j \rightharpoonup f$  in $L^2(0,T;H)$. However, any studies similar to Theorem \ref{teo 2.7} have not been done on the periodic problem $(\mathcal{P}(\varphi^{t}_{j},f_j))$ yet, which can happen due to a very peculiar difficulty of the periodic problem.

 More accurately, by virtue of Theorems \ref{teo 2.6} and \ref{teo 2.7}, for any $f \in L^2(0,T;H)$ and $u_0 \in \overline{D(\varphi^{0})}^{H}$, every unique weak solution of \eqref{2.130} becomes the limit of unique weak solutions $u_{j}$ for $(\mathrm{C}(\varphi^{t}_{j},f,u_0))$ as $j \to +\infty$. 
 It should be noted, in general, periodic solutions could not be unique.
So there may be a strong solution $u$ of \eqref{2.180} such that any strong solutions $u_j$ of $(\mathcal{P}_{\varphi_{j}^{t},f_{j}})$ never converge to $u$ as $j \to +\infty$. In the Remark \ref{rem 3.10} we give a counter example.

Therefore, by virtue of the difference exposed above, the strong convergence of solutions $u_j$ for $(\mathcal{P}_{\varphi_{j}^{t},f_{j}})$ in $\mathscr{C}([0,T];H)$ cannot be shown in the same way as in the case of the Cauchy problem (see the proof of \cite[Theorem 2.7.1]{ken1}; thus in order to overcome this difficulty, we present the following hypothesis of compactness of the level set on $( \varphi^{t}_{j})_{j \in \mathbb{N}}$.

\begin{itemize}
\item[($\mathfrak{C}_{1}$)] For every $\mu > 0$ and $t \in [0,T]$, any sequence $(u_j)_{j\in \mathbb{N}}$ in $H$ verifying $\sup_{j \in \mathbb{N}} \{ \varphi^{t}_{j}(u_j)+|u_j|_{H}\} \leqslant \mu$ is precompact in $H$.
\end{itemize}

Then this result can be stated as follows.

\begin{theorem}(\cite{akagi})\label{teo 2.10}
For every $j \in \mathbb{N}$, let $( \varphi^{t}_{j})_{t \in [0,T]} \in \mathcal{B}(\alpha_j,\beta_j,C_0,( \mathfrak{C}_{\varrho\geqslant 0}) ) $ and let $(\varphi^{t}_{j})_{t \in [0,T]} \in \Gamma(\alpha,\beta)$ such that $\alpha_j(\varrho,\cdot),\alpha(\varrho,\cdot) \in W^{1,2}(0,T)$ and $\beta_j(\varrho,\cdot),\beta(\varrho,\cdot) \in W^{1,1}(0,T)$ for every $\varrho \in [0,+\infty)$. Suppose that $\varphi^{t}_{n} \to \varphi^{t}$ on $H$ in the sense of Mosco as $j \to +\infty$ and that $(\mathfrak{C}_{1})$ holds. Besides, let $(f_j)_{j\in \mathcal{N}}$ be a sequence in $L^2(0,T;H)$ such that $f_j \rightharpoonup f$ in $L^2(0,T;H)$ and let $(u_{j})_{j\in \mathbb{N}}$ be a sequence of strong solutions for $(\mathcal{P}_{\varphi_{j}^{t},f_{j}})$. Then there exists a subsequence $(j_{k})_{k\in \mathbb{N}}$ of $(j)_{j\in \mathbb{N}}$ such that $u_{j_{k}}$ converges to $u$ in the following sense:
\begin{equation*}\label{2.21}
u_{j_{k}} \to u \quad \textrm{ strongly in } \; \mathscr{C}([0,T];H), \; \textrm{weakly in} \; W^{1,2}(0,T;H),
\end{equation*}
and the limit $u$ becomes a strong solution of \eqref{2.180}. Furthermore,
\begin{equation*}\label{2.22}
\int_{0}^{T} \varphi^{t}_{j}(u_j(t))\; dt \to \int_{0}^{T} \varphi^{t}(u(t))\; dt.
\end{equation*}
\end{theorem}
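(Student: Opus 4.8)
\emph{Strategy.} Since here $\mathtt A\equiv1$, the functionals $\varphi_\jmath(v):=\Phi_{\mathtt A}(v)$ built on the exponent $\wp_\jmath(\cdot,\cdot)$ (finite exactly on $\mathcal W^{s,\wp_\jmath(\cdot,\cdot)}_{0}(\Omega)$) are time--independent, and by Theorem \ref{tu} we have $\partial\varphi_\jmath=(-\Delta)^{s}_{\wp_\jmath(\cdot)}$, so \eqref{aproximado} is the gradient flow $\frac{du_{\wp_\jmath(\cdot)}}{dt}(t)+\partial\varphi_\jmath(u_{\wp_\jmath(\cdot)}(t))\ni f_{\wp_\jmath(\cdot)}(t)$ in $H:=L^{2}(\Omega)$ with $u_{\wp_\jmath(\cdot)}(0)=u^{0}_{\wp_\jmath(\cdot)}$. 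The plan is to reduce everything to Proposition \ref{p1} by (a) identifying the Mosco limit $\varphi_\infty\in\Phi(H)$ of $(\varphi_\jmath)$, (b) unfolding $f-\mathfrak u_\infty'\in\partial\varphi_\infty(\mathfrak u_\infty)$ into the coupled system \eqref{sb5}--\eqref{sb7}, and (c) upgrading the convergence under \eqref{sb8}. The candidate limit is
\begin{equation*}
\varphi_\infty(v):=\begin{cases}\displaystyle\int_{\Omega^{2}\setminus\overline{\mathcal O^{2}}}\frac{1}{\kappa(x,y)}\,\frac{|v(x)-v(y)|^{\kappa(x,y)}}{|x-y|^{N+s\kappa(x,y)}}\,dx\,dy,&v\in\mathbb D_\infty,\\[2mm]+\infty,&v\in L^{2}(\Omega)\setminus\mathbb D_\infty,\end{cases}
\end{equation*}
where $\mathbb D_\infty:=\{v\in W^{s,\kappa^{-}}_{0}(\Omega):v|_{\Omega\setminus\overline{\mathcal O}}\in\mathcal W^{s,\kappa(\cdot,\cdot)}_{0}(\Omega\setminus\overline{\mathcal O})\ \text{and}\ |\mathcal G_{s}v(x,y)|\le1\ \text{for a.e.}\ (x,y)\in\mathcal O^{2}\}$; properness, convexity and $L^{2}$--lower semicontinuity of $\varphi_\infty$ follow from Fatou's lemma and weak lower semicontinuity of the $\kappa(\cdot)$--modular.

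\emph{Mosco convergence.} For the $\liminf$ inequality, take $u_\jmath\rightharpoonup v$ in $H$ with $\liminf_\jmath\varphi_\jmath(u_\jmath)=:M<\infty$; along a subsequence $\varphi_\jmath(u_\jmath)\le M+1$, hence $\int_{\mathcal O^{2}}|\mathcal G_{s}u_\jmath(x,y)|^{\kappa_\jmath(x,y)}\,dx\,dy/|x-y|^{N}\le\wp_\jmath^{+}(M+1)$. On each truncated piece $E_\delta:=\{(x,y)\in\mathcal O^{2}:|x-y|\ge\delta\}$ the measure $dx\,dy/|x-y|^{N}$ is finite, so for fixed $q$ and $\jmath$ with $\kappa_\jmath^{-}\ge q$, H\"older's inequality together with \eqref{sb3} gives $\limsup_\jmath\|\mathcal G_{s}u_\jmath\|_{L^{q}(E_\delta)}\le|E_\delta|^{1/q}$; passing to the limit ($\mathcal G_{s}u_\jmath\rightharpoonup\mathcal G_{s}v$ on $E_\delta$), then $q\to\infty$ and $\delta\to0$, yields $|\mathcal G_{s}v|\le1$ a.e. on $\mathcal O^{2}$, while, since $\wp_\jmath\equiv\kappa$ on $\Omega^{2}\setminus\overline{\mathcal O^{2}}$ for $\jmath$ large, weak lower semicontinuity of the $\kappa(\cdot)$--modular there shows $v\in\mathbb D_\infty$ and $\varphi_\infty(v)\le M$. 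For the recovery sequence, given $v\in\mathbb D_\infty$ I would first approximate $v$ (mollification followed by the scaling $v\mapsto(1-\varepsilon)v$, with care near $\partial\Omega$ to preserve the zero boundary value and near $\partial\mathcal O$ to preserve $\mathcal W^{s,\kappa(\cdot,\cdot)}_{0}(\Omega\setminus\overline{\mathcal O})$--convergence) by functions $w$ that are Lipschitz on $\overline{\mathcal O}$ with $\|\mathcal G_{s}w\|_{L^{\infty}(\mathcal O^{2})}<1$; for such $w$, splitting the $\mathcal O^{2}$--integral at $\{|x-y|=d\}$ with $d$ small (the Lipschitz bound near the diagonal, the strict bound $<1$ away from it) shows $\int_{\mathcal O^{2}}\kappa_\jmath^{-1}|\mathcal G_{s}w|^{\kappa_\jmath}\,dx\,dy/|x-y|^{N}\to0$ while $w$ keeps the full $\kappa(\cdot)$--energy on $\Omega^{2}\setminus\overline{\mathcal O^{2}}$; a diagonal extraction then produces $v_\jmath\to v$ in $H$ with $\varphi_\jmath(v_\jmath)\to\varphi_\infty(v)$.

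\emph{From the abstract flow to the mixed system.} Mosco convergence and \eqref{cs2}--\eqref{cs3} place us in Proposition \ref{p1}: there is $\mathfrak u_\infty\in\mathscr C([0,T];L^{2}(\Omega))$ with $\sqrt t\,\mathfrak u_\infty'\in L^{2}(0,T;L^{2}(\Omega))$ --- hence $\mathfrak u_\infty\in W^{1,2}_{\mathrm{loc}}((0,T];L^{2}(\Omega))$ by the regularizing effect of subdifferential flows --- satisfying \eqref{cs4}--\eqref{cs5}, $\mathfrak u_\infty(0)=u_{0}=\mathfrak u_\infty^{0}$, and $\mathfrak u_\infty'(t)+\partial\varphi_\infty(\mathfrak u_\infty(t))\ni f(t)$ for a.e.\ $t$; in particular $\mathfrak u_\infty(t)\in\mathbb D_\infty$ for a.e.\ $t$, which is \eqref{sb4}, \eqref{sb45}, \eqref{sb6}. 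To split the inclusion $g(t):=f(t)-\mathfrak u_\infty'(t)\in\partial\varphi_\infty(\mathfrak u_\infty(t))$: testing the subdifferential inequality with $\mathfrak u_\infty(t)\pm\varepsilon\phi$, $\phi\in\mathscr D(\Omega\setminus\overline{\mathcal O})$ (which leaves the $\mathcal O^{2}$--constraint and the restriction of $\mathfrak u_\infty(t)$ to $\mathcal O$ untouched) and letting $\varepsilon\to0$ gives the Euler--Lagrange identity of the $\kappa(\cdot)$--energy, i.e.\ \eqref{sb5} in $\mathscr D'(\Omega\setminus\overline{\mathcal O})$; testing instead against competitors built from $z\in\mathbb K_{\infty,\mathcal O}(\mathfrak u_\infty(t))$ extended by $\mathfrak u_\infty(t)$ off $\mathcal O$ --- which lie in $\mathbb D_\infty$ precisely because the compatibility $z-\mathfrak u_\infty(t)\in W^{s,\kappa^{-}}_{0}(\mathcal O)$ makes the extension belong to $W^{s,\kappa^{-}}_{0}(\Omega)$ --- and exploiting the structure of $\varphi_\infty$ near the interface $\partial\mathcal O$ together with the monotonicity of $(-\Delta)^{s}_{\wp_\jmath(\cdot)}$ and the weak formulation of \eqref{aproximado} localized on $\mathcal O$, one extracts the evolutionary quasivariational inequality \eqref{sb7}, whose admissible set $\mathbb K_{\infty,\mathcal O}(\mathfrak u_\infty(t))$ depends on the unknown through that very boundary compatibility. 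Finally, \eqref{sb8} says $\varphi_\jmath(u^{0}_{\wp_\jmath(\cdot)})\to\varphi_\infty(u_{0})<\infty$, so the last part of Proposition \ref{p1} yields \eqref{11} and $\varphi_\jmath(u_{\wp_\jmath(\cdot)}(t))\to\varphi_\infty(\mathfrak u_\infty(t))$ uniformly in $t$; as $\wp_\jmath\equiv\kappa$ on $(\Omega\setminus\overline{\mathcal O})^{2}$ for $\jmath$ large, this forces convergence of the $\kappa(\cdot)$--modular of $u_{\wp_\jmath(\cdot)}(t)$ on $\Omega\setminus\overline{\mathcal O}$, which combined with weak convergence in the uniformly convex space $\mathcal W^{s,\kappa(\cdot,\cdot)}_{0}(\Omega\setminus\overline{\mathcal O})$ and dominated convergence in $t$ gives \eqref{sb9} for every $\sigma\in[1,\infty)$.

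\emph{Main obstacle.} Two points are delicate. First, the measure $dx\,dy/|x-y|^{N}$ has infinite mass at the diagonal, so the passage $\wp_\jmath\to\infty$ on $\mathcal O^{2}$ --- both the extraction of the constraint $|\mathcal G_{s}v|\le1$ in the $\liminf$ step and the vanishing of the $\mathcal O^{2}$--energy along the recovery sequence --- must be run through localization away from $\{x=y\}$ combined with the H\"older regularity carried by the constraint set. Second, and most demanding, is the identification of $\partial\varphi_\infty$ with the coupled PDE/quasivariational system: the interface $\partial\mathcal O$ couples the fractional $\kappa(\cdot)$--Laplacian on $\Omega\setminus\overline{\mathcal O}$ with the inequality over $\mathcal O$, and making \eqref{sb7} --- with its solution--dependent constraint set $\mathbb K_{\infty,\mathcal O}(\mathfrak u_\infty(t))$ --- come out cleanly from the abstract inclusion (and checking that no spurious interface flux remains) is the technically hardest part of the argument.
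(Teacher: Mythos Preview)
Your proposal does not address the stated theorem at all. Theorem~\ref{teo 2.10} is an \emph{abstract} convergence result for the \emph{periodic} problem $(\mathcal{P}_{\varphi^{t}_{j},f_{j}})$ in a general Hilbert space $H$: given time-dependent families $(\varphi^{t}_{j})_{t\in[0,T]}\in\mathcal{B}_{\Theta}(\ldots)$ that Mosco-converge to $(\varphi^{t})_{t\in[0,T]}$, the compactness condition~$(\mathfrak{C}_{1})$, and $f_{j}\rightharpoonup f$, one must extract a subsequence of periodic strong solutions $u_{j}$ converging in $\mathscr{C}([0,T];H)$ and weakly in $W^{1,2}(0,T;H)$ to a periodic strong solution of $(\mathcal{P}_{\varphi^{t},f})$. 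None of this appears in your write-up.

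What you actually sketched is a proof of Theorem~\ref{cs22} (the partial diffusion limit with discontinuous exponent): the functional $\varphi_{\infty}$ with domain $\mathbb{D}_{\infty}$, the splitting into \eqref{sb5}--\eqref{sb7}, the hypothesis \eqref{sb8} and the conclusion \eqref{sb9}, the set $\mathbb{K}_{\infty,\mathcal{O}}(\xi)$, the interface $\partial\mathcal{O}$ --- all of these belong to Theorem~\ref{cs22}, not to Theorem~\ref{teo 2.10}. In particular, your functionals are \emph{time-independent}, you work with the \emph{Cauchy} problem (initial datum, not $u(0)=u(T)$), and you invoke Proposition~\ref{p1}, which is the Cauchy-problem analogue; the periodic setting is precisely where Proposition~\ref{p1} is unavailable and where the additional compactness assumption $(\mathfrak{C}_{1})$ enters to compensate for the lack of an initial anchor. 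Note also that in the paper Theorem~\ref{teo 2.10} is quoted from \cite{akagi} and not proved; if you were asked to supply a proof, the essential ingredients are uniform $W^{1,2}(0,T;H)$ and $\sup_{t}\varphi^{t}_{j}(u_{j}(t))$ bounds coming from membership in $\mathcal{B}_{\Theta}$, use of $(\mathfrak{C}_{1})$ together with an Ascoli-type argument to get strong $\mathscr{C}([0,T];H)$ compactness, and then Proposition~\ref{prop 2.4} to identify the limit inclusion and pass the periodicity $u(0)=u(T)$ to the limit.
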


\begin{remark}\label{rem 2.11}
\begin{itemize}
\item[(a)] In Theorem \ref{teo 2.10}, the limit $u$ possibly depends on the choice of the subsequence $(j_k)_{k\in \mathbb{N}}$.
\item[(b)] Under assumptions on $(\varphi^{t}_{j})_{t \in [0,T]}$ and $( \varphi^{t})_{t \in [0,T]}$ in Theorem \ref{teo 2.10}, we can verify that $( \varphi^{t})_{t \in [0,T]} \in \Gamma_{\Theta}(\alpha,\beta,C_0)$. Indeed, let $u \in D(\varphi^{T})$. Then we can take a sequence $(u_j)_{j\in \mathbb{N}}$ in $H$ such that $u_j \to u$ strongly in $H$ and $\varphi^{T}_{j}(u_j) \to \varphi^{T}(u)$. Furthermore, from the fact that $\varphi^{0}_{j} \leqslant \varphi^{T}_{j}$, it follows that
\begin{equation*}\label{2.23}
\varphi^{0}_{j}(u_j) \leqslant \varphi^{T}_{j}(u_j) \longrightarrow \varphi^{T}(u).
\end{equation*}
\end{itemize}
Besides, since $\liminf\limits_{j \to +\infty} \varphi^{0}_{j}(u_j) \geqslant \varphi^{0}(u)$, we have $\varphi^{0}(u) \leqslant \varphi^{T}(u)$, which implies $D(\varphi^{T}) \subset D(\varphi^{0})$. Analogously we can also infer that $|u|^{2}_{H} \leqslant  C_0(\varphi^{t}(u)+1)$ for all $u \in D(\varphi^{t})$ and $t \in [0,T]$.
\end{remark}



\section{Existence of solutions for  problem \eqref{IP}}\label{unicidade}

\begin{theorem}(\cite[Theorem 1]{elardjh})\label{brez}
Assume that $\wp^{-}>1.$ For $u_{0}\in \mathcal{H}$ and $f\in L_{loc}^{2}(\mathbb{R}_{0}^{+};\mathcal{H}),$ there exists a unique solution $u$ of \eqref{IP} such that the function $t\mapsto \Phi(u(\cdot,t))$ is absolutely continuous in $\mathbb{R}_{0}^{+}.$ In particular, if $u_{0}\in \mathbb{X},$ then 
\begin{equation*}
u\in W_{loc}^{1,2}(\mathbb{R}_{0}^{+};\mathcal{H})\bigcap C_{\omega}(\mathbb{R}_{0}^{+};\mathbb{X})
\end{equation*} 
and $t\mapsto \Phi(u(\cdot,t))$ is absolutely continuous in $\mathbb{R}_{0}^{+}.$\\
Besides, the unique solution $u$ of problem \eqref{IP} depends continuously on initial data $u_{0}$ and $f$ in the following sense: Let $u_{i},\,i=1,2,$ denote the unique solutions of problem \eqref{IP}, with $u_{0}=u_{0,i}\in L^{2}(\Omega)$ and $f=f_{i}\in L_{loc}^{2}(\mathbb{R}_{0}^{+};\mathcal{H})$ for $i=1,2.$ Then 

\begin{equation*}
\|u_{1}(\cdot,t)-u_{2}(\cdot,t)\|_{L^{2}(\Omega)}\leqslant \|u_{0,1}-u_{0,2}\|_{\mathcal{H}}+\int_{0}^{t}\|f_{1}(\cdot,\tau)-f_{2}(\cdot,\tau)\|_{\mathcal{H}}\,d\tau
\end{equation*} 
for all $t\in \mathbb{R}_{0}^{+}.$
\end{theorem}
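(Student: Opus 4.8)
The plan is to recast \eqref{IP} as an abstract gradient flow in the Hilbert space $\mathcal{H}=L^{2}(\Omega)$ and to invoke the Brézis theory of evolution equations governed by subdifferentials of convex functionals. With $\mathtt{A}\equiv 1$, set
\begin{equation*}
\Phi(u)=\int_{\Omega^{2}}\frac{1}{\wp(x,y)}\frac{|u(x)-u(y)|^{\wp(x,y)}}{|x-y|^{N+s\wp(x,y)}}\,dx\,dy\quad\text{for }u\in\mathbb{X},\qquad \Phi(u)=+\infty\ \text{ for }u\in\mathcal{H}\setminus\mathbb{X}.
\end{equation*}
By Lemma \ref{convex} the functional $\Phi$ is proper and convex; its lower semicontinuity on $\mathcal{H}$ follows by extracting from an $L^{2}(\Omega)$-convergent sequence a subsequence converging a.e.\ on $\Omega$, hence with differences converging a.e.\ on $\Omega^{2}$, and applying Fatou's lemma to the Gagliardo integrand. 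By Theorem \ref{tu}, the realization at $\mathcal{H}$ of $(-\Delta)^{s}_{\wp(\cdot)}$ coincides with $\partial\Phi$, so \eqref{IP} is exactly $\frac{du}{dt}(t)+\partial\Phi(u(t))\ni f(t)$ in $\mathcal{H}$ with $u(0)=u_{0}$. Since $\partial\Phi$ is maximal monotone as the subdifferential of a proper convex l.s.c.\ functional (cf.\ Proposition \ref{monotone} and the remark following Theorem \ref{tu}), and $\overline{\mathcal{D}(\Phi)}^{\mathcal{H}}=\overline{\mathbb{X}}^{\mathcal{H}}=\mathcal{H}$, the abstract machinery applies.

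Next I would read off existence, uniqueness and regularity. Applying the abstract result — for instance Theorem \ref{teo 2.6} with the time-independent potential $\varphi^{t}\equiv\Phi$ (so that $\alpha,\beta$ may be chosen constant), together with the classical Brézis smoothing estimates — one obtains, for every $u_{0}\in\mathcal{H}$ and $f\in L^{2}_{loc}(\mathbb{R}_{0}^{+};\mathcal{H})$, a unique solution $u$ with $\sqrt{t}\,\frac{du}{dt}\in L^{2}_{loc}$ and $t\mapsto\Phi(u(t))$ absolutely continuous on compact subintervals of $\mathbb{R}_{0}^{+}$. If in addition $u_{0}\in\mathbb{X}=\mathcal{D}(\Phi)$, then $\frac{du}{dt}\in L^{2}_{loc}(\mathbb{R}_{0}^{+};\mathcal{H})$ and $t\mapsto\Phi(u(t))$ is locally bounded; by Proposition \ref{lw0} this yields a local-in-time bound for $\|u(t)\|_{\mathbb{X}}$, and combining it with $u\in\mathscr{C}(\mathbb{R}_{0}^{+};\mathcal{H})$, the reflexivity of $\mathbb{X}$, and the density of $\mathscr{C}_{0}^{\infty}(\Omega)$ in $\mathbb{X}$ upgrades the regularity to $u\in W^{1,2}_{loc}(\mathbb{R}_{0}^{+};\mathcal{H})\cap C_{\omega}(\mathbb{R}_{0}^{+};\mathbb{X})$.

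For the continuous dependence, let $u_{1},u_{2}$ be the solutions with data $(u_{0,1},f_{1})$ and $(u_{0,2},f_{2})$. Subtracting the two equations and pairing with $u_{1}(t)-u_{2}(t)$ in $\mathcal{H}$, monotonicity of $\partial\Phi$ forces the nonlinear contribution to be nonnegative, whence
\begin{equation*}
\frac{1}{2}\frac{d}{dt}\|u_{1}(t)-u_{2}(t)\|_{\mathcal{H}}^{2}\leqslant\bigl(f_{1}(t)-f_{2}(t),\,u_{1}(t)-u_{2}(t)\bigr)_{\mathcal{H}}\leqslant\|f_{1}(t)-f_{2}(t)\|_{\mathcal{H}}\,\|u_{1}(t)-u_{2}(t)\|_{\mathcal{H}}
\end{equation*}
for a.e.\ $t$. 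The elementary lemma ``$\tfrac12\tfrac{d}{dt}y^{2}\leqslant g\,y$ with $y\geqslant0$ implies $y(t)\leqslant y(0)+\int_{0}^{t}g$'' then gives the claimed estimate with $y(t)=\|u_{1}(t)-u_{2}(t)\|_{L^{2}(\Omega)}$. I expect the main work to lie not in this last step but in the careful verification, in the variable-exponent framework, of the maximal monotonicity of $\partial\Phi$ and of the smoothing/energy estimates underpinning the existence step; once the identification $(-\Delta)^{s}_{\wp(\cdot)}=\partial\Phi$ of Theorem \ref{tu} is granted, the remainder is a routine application of subdifferential calculus.
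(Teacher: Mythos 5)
The paper does not give its own proof of this result: it is imported verbatim from \cite[Theorem 1]{elardjh}, and the surrounding framework (Lemma \ref{convex}, Theorem \ref{tu}, Remark \ref{remark1}, Proposition \ref{monotone}) is precisely the subdifferential setup your proposal invokes. Your sketch — recast \eqref{IP} as $\frac{du}{dt}+\partial\Phi(u)\ni f$ in $\mathcal{H}=L^{2}(\Omega)$, verify $\Phi$ is proper, convex and l.s.c.\ (the Fatou argument along a.e.\ convergent subsequences is the right one), identify $(-\Delta)^{s}_{\wp(\cdot)}=\partial\Phi$ via Theorem \ref{tu}, appeal to the Brézis/Kenmochi theory for existence, uniqueness and the smoothing estimates, and then obtain the $L^{2}$-contraction estimate from monotonicity and the elementary differential inequality — is exactly the route taken in the cited reference and is consistent with everything the present paper sets up; I see no gap.

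One small point of precision: for $u_{0}\in\mathcal{H}\setminus\mathbb{X}$ one has $\Phi(u(0))=+\infty$, so the claim that $t\mapsto\Phi(u(\cdot,t))$ is absolutely continuous should be read (as the classical theory gives) on $(0,\infty)$, or equivalently locally on $\mathbb{R}^{+}_{0}$ away from $t=0$, with $t\,\Phi(u(t))$ bounded near $0$ and $\sqrt{t}\,\frac{du}{dt}\in L^{2}_{\mathrm{loc}}$; global absolute continuity down to $t=0$ is only obtained under the extra hypothesis $u_{0}\in\mathbb{X}=\mathcal{D}(\Phi)$, which your proposal correctly treats as the case producing $\frac{du}{dt}\in L^{2}_{\mathrm{loc}}$ and $u\in C_{\omega}(\mathbb{R}^{+}_{0};\mathbb{X})$. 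This is a wrinkle in the statement as quoted rather than a flaw in your argument.
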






\section{Proof of the main results}\label{prove1}

\subsection{Proof of Theorem \ref{teo 3.4} (Existence of solution for ($\mathscr{C}_{\varphi^{t}_{\wp_j},f, u_{0}}$))}\label{sec1}\label{limit}\quad\\

For proof of the Theorems \ref{teo 3.4} - \ref{teo 3.9},   let $\mathcal{H} := L^2(\Omega)$ and define $\varphi^{t}_{\wp} : \mathcal{H} \to [0,+\infty]$ as follows:
\begin{equation*}\label{3.10}
\displaystyle{
\varphi^{t}_{\wp}(u) :=
\begin{cases}
\dfrac{1}{\wp} \displaystyle\int_{\Omega^{2}} \mathtt{A}(x,y,t)|\mathcal{G}_{s,\wp}u(x,y)|^{\wp} \; dx\,dy \quad & \textrm{if} \; u \in W^{s,\wp}_{0}(\Omega), \; \mathtt{A}(\cdot, \cdot,t)\mathcal{G}_{s,\wp}u(x,y) \in L^{\wp}(\Omega^{2}), \\
+\infty & \textrm{otherwise}.
\end{cases}}
\end{equation*}
which can be rewritten as

\begin{equation}\label{moscofunc11}
\begin{split}
\varphi^{t}_{\wp}(u) :=
\begin{cases}
\dfrac{1}{\wp} \displaystyle\int_{\Omega^{2}} \mathtt{A}(x,y,t) |\mathcal{G}_{\alpha}u(x,y)| ^{\wp}\; dx\,dy \quad & \textrm{if} \; u \in W^{s,\wp}_{0}(\Omega), \; \mathtt{A}(\cdot, \cdot,t)\mathcal{G}_{\alpha}u(x,y)\in L^{\wp}(\Omega^{2}), \\
+\infty & \textrm{otherwise}.
\end{cases}
\end{split}
\end{equation}
with $s=\alpha-\frac{N}{\wp},$ $\alpha\in (0,1).$

Furthermore, from $(W_1)$ we infer that
\begin{equation*}\label{3.11}
\varphi^{t}_{\wp} \in \Psi(\mathcal{H}), \quad D(\varphi^{t}_{\wp}) = W^{s,\wp}_{0}(\Omega) \quad \mbox{ for all } t \in [0,T].
\end{equation*}

\begin{proof}
We claimed that $(\varphi^{t}_{\wp})_{t \in [0,T]} \in \Gamma(\alpha_{1},0)$ for some function $\alpha_{1} :
[0,+\infty) \times [0,T] \to \mathbb{R}$. Indeed, let $t_0 \in [0,T]$ and $u_0 \in D(\varphi^{t_0}_{p})$ be fixed and define the function $u : [0,T] \to \mathcal{H}$ as follows:
\begin{equation}\label{3.12}
u(t) := \dfrac{\sigma(t)}{\sigma(t_0)}u_0 \in D(\varphi^{t}_{\wp}) \quad \mbox{ for all } t \in [0,T].
\end{equation}
Since $(W_{1})$ says $\mathtt{A}(x,y,t) = \mathfrak{a}(x,y)\sigma(t)$, it follows that
\begin{equation*}\label{3.13}
\mathcal{G}_{(s,\wp)}u(t) = \dfrac{\sigma(t_0)}{\sigma(t)} \mathcal{G}_{(s,\wp)}u_{0}=\frac{\mathtt{A}(x,y,t_{0})}{\mathtt{A}(x,y,t)}\mathcal{G}_{(s,\wp)}u_{0} \quad \mbox{ for all } t \in [0,T],
\end{equation*}
hence

\begin{equation*}\label{3.14}
\varphi^{t}_{\wp}(u(t)) = \dfrac{1}{\wp} \int_{\Omega^{2}} \left(\mathtt{A}(x,y,t)|\mathcal{G}_{(s,\wp)}u| \right)^{\wp} \; dx\,dy = \dfrac{1}{\wp} \int_{\Omega^{2}} \left(\mathtt{A}(x,y,t_{0})|\mathcal{G}_{(s,\wp)}u_{0}| \right)^{\wp} \; dx\,dy = \varphi^{t_0}_{\wp} (u_0).
\end{equation*}
Also, note that
\begin{equation}\label{3.15}
\begin{split}
|u(t)-u_0|_{\mathcal{H}} & = \left| \dfrac{\sigma(t)}{\sigma(t_0)}-1 \right| |u_0|_{\mathcal{H}} \\
& \leqslant \dfrac{1}{a_0} |\mathfrak{a}|_{L^{\infty}(\Omega^{2})} | \sigma(t)-\sigma(t_0)| |u_0|_{\mathcal{H}} \\
& \leqslant |\alpha_{1}(\varrho,t) - \alpha_{1}(\varrho,t_0)| \quad \mbox{ for all } \varrho \geqslant |u_0|_{\mathcal{H}},
\end{split}
\end{equation}
where $\alpha_1$ is given by
\begin{equation*}\label{3.16}
\alpha_{1}(\varrho,t) = \dfrac{\varrho}{a_0} |\mathfrak{a}|_{L^{\infty}(\Omega^{2})} \sigma(t) \in W^{1,2}(0,T).
\end{equation*}
Thus, $( \varphi^{t}_{\wp})_{t \in [0,T]} \in \Gamma(\alpha_{1},0)$ for every $\wp \in (1,+\infty)$. Then, from Theorem \ref{teo 2.6} to $(\mathscr{C}_{\varphi_{\wp}^{t},f,u_{0}})$, which completes the proof.
\end{proof}
\subsection{Proof of Theorem \ref{teo 3.5} (Asymptotic behavior of the limit problem as $\wp \to \infty$ for  \eqref{2.13})}\label{sec1}\label{limit}\quad\\

In order to prove Theorem \ref{teo 3.5} first, we will need the following result.

\begin{lemma}\label{lemma 3.6}
For each $t \in [0,T]$, we define

\begin{equation*}
\mathtt{K}^{t} := \left\lbrace u \in W^{s,2}_{0}(\Omega): \left|\mathcal{G}_{s}(x,y) \right| \leqslant\frac{1}{ \mathtt{A}(x,y,t)} \; \textrm{for a.e} \; (x,y) \in \Omega^{2} \right\rbrace,
\end{equation*}
we have
\begin{equation*}\label{3.28}
\varphi^{t}_{\wp_j} \to \varphi^{t}_{\infty} \quad \textrm{on $\mathcal{H}$ in the sense of Mosco as} \; \wp_j \longrightarrow +\infty.
\end{equation*}
\end{lemma}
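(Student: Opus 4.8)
The plan is to verify directly the two defining conditions of Mosco convergence from Definition \ref{M} for the functionals $\varphi^{t}_{\wp_j}$ and $\varphi^{t}_{\infty}$ on $\mathcal{H}=L^{2}(\Omega)$, bearing in mind that $\mathcal{D}(\varphi^{t}_{\infty})=\mathtt{K}^{t}$ and $\varphi^{t}_{\infty}\equiv 0$ on $\mathtt{K}^{t}$. By $(W_{1})$ we have $\mathtt{A}(x,y,t)=\mathfrak{a}(x,y)\sigma(t)$, so the $t$-dependence enters only through the positive scalar $\sigma(t)$; it therefore suffices to argue for a fixed $t$, the estimates being uniform on $[0,T]$. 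In all the estimates I would systematically split $\Omega^{2}$ into the region $\{|x-y|>\delta\}$, on which the kernel $|x-y|^{-N}$ in $d\mu$ is bounded and $d\mu$ has finite total mass, and the ``diagonal'' region $\{|x-y|\le\delta\}$, which is the source of all the technical difficulty, and then let $\delta\to0$ at the end.

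\emph{Condition (i): recovery sequence.} Let $u\in\mathtt{K}^{t}$, so $\mathtt{A}(x,y,t)|\mathcal{G}_{s}u(x,y)|\le 1$ a.e. I would first reduce, by a density argument in $\mathcal{H}$ together with the scaling $u\mapsto(1-\varepsilon)u$ (which strictly improves the constraint to $\le 1-\varepsilon$), to the case of a Lipschitz function $u$ with compact support in $\Omega$ satisfying the constraint, and then take $u_{j}:=u$. On $\{|x-y|>\delta\}$ one has $(\mathtt{A}|\mathcal{G}_{s}u|)^{\wp_j}\le 1$, so this part contributes at most $\wp_j^{-1}\mu(\{|x-y|>\delta\})<\infty$ to $\varphi^{t}_{\wp_j}(u)$; on $\{|x-y|\le\delta\}$ the Lipschitz bound gives $\mathtt{A}(x,y,t)|\mathcal{G}_{s}u(x,y)|\le C|x-y|^{1-s}$, so (choosing $\delta$ small enough) this factor is $<1$ and the radial integral $\int_{0}^{\delta}r^{(1-s)\wp_j-1}\,dr$ makes that contribution tend to $0$. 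Dividing by $\wp_j$ yields $\varphi^{t}_{\wp_j}(u_{j})\to 0=\varphi^{t}_{\infty}(u)$ together with $u_{j}\to u$ in $\mathcal{H}$.

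\emph{Condition (ii): weak lower semicontinuity.} Let $u_{j}\rightharpoonup u$ in $\mathcal{H}$. We may assume $M:=\liminf_{j}\varphi^{t}_{\wp_j}(u_{j})<\infty$ and, along a subsequence, $\sup_{j}\varphi^{t}_{\wp_j}(u_{j})\le M+1$. Fix $m\in\mathbb{N}$; for $j$ large, $\wp_j\ge m$, and H\"{o}lder's inequality with exponents $\wp_j/m$ and $\wp_j/(\wp_j-m)$ on the finite-measure space $(\{|x-y|>\delta\},d\mu)$ gives
\[
\int_{\{|x-y|>\delta\}}\big(\mathtt{A}(x,y,t)|\mathcal{G}_{s}u_{j}|\big)^{m}\,d\mu
\le\Big(\wp_j\,\varphi^{t}_{\wp_j}(u_{j})\Big)^{m/\wp_j}\,\mu(\{|x-y|>\delta\})^{\,1-m/\wp_j}.
\]
Since $\wp_j^{\,m/\wp_j}\to 1$ and $(M+1)^{m/\wp_j}\to 1$, the right-hand side tends to $\mu(\{|x-y|>\delta\})$. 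Moreover $u_{j}(x)-u_{j}(y)\rightharpoonup u(x)-u(y)$ weakly in $L^{2}(\Omega^{2})$, and the uniform $L^{m}$-bound just obtained identifies the weak $L^{m}$-limit of $\mathtt{A}\,\mathcal{G}_{s}u_{j}$ on $\{|x-y|>\delta\}$ as $\mathtt{A}\,\mathcal{G}_{s}u$; by weak lower semicontinuity of the $L^{m}$-norm,
\[
\|\mathtt{A}\,\mathcal{G}_{s}u\|_{L^{m}(\{|x-y|>\delta\},d\mu)}\le\mu(\{|x-y|>\delta\})^{1/m}.
\]
Letting first $m\to\infty$ and then $\delta\to 0$ forces $\|\mathtt{A}\,\mathcal{G}_{s}u\|_{L^{\infty}(\Omega^{2})}\le 1$, i.e. $|\mathcal{G}_{s}u(x,y)|\le 1/\mathtt{A}(x,y,t)$ a.e.; once $u\in W^{s,2}_{0}(\Omega)$ is also known, $u\in\mathtt{K}^{t}$ and hence $\varphi^{t}_{\infty}(u)=0\le M=\liminf_{j}\varphi^{t}_{\wp_j}(u_{j})$, closing the argument.

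\emph{Main obstacle.} The delicate point is precisely the membership $u\in W^{s,2}_{0}(\Omega)$ in condition (ii): the pointwise constraint $|\mathcal{G}_{s}u|\le 1/\mathtt{A}$ does \emph{not} by itself place $u$ in $W^{s,2}_{0}(\Omega)$ (H\"{o}lder-type control is strictly weaker than $W^{s,2}$-regularity near the diagonal), so this must be squeezed out of the uniform energy bound $\sup_{j}\varphi^{t}_{\wp_j}(u_{j})<\infty$ together with the vanishing boundary trace of $u_{j}\in W^{s,\wp_j}_{0}(\Omega)$, by interpolating the $W^{s,\wp_j}_{0}$-control against the finite reference measure and exploiting the factorization $\mathtt{A}=\mathfrak{a}\sigma$. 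The twin difficulty, on the recovery side, is to carry out the regularization in (i) so that the approximants keep satisfying the constraint up to a vanishing error, even though $\mathfrak{a}$ is only $L^{\infty}$, so that the diagonal part of the Gagliardo integral stays negligible after division by $\wp_j$.
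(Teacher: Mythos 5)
Your overall plan (verify the two Mosco conditions directly: constant recovery sequence plus a H\"older-bootstrap / weak-lower-semicontinuity / $q\to\infty$ argument) is the same as the paper's, but your technical execution differs in one important way. You keep the $s$-H\"older quotient $\mathcal{G}_{s}u$ and work against the measure $d\mu=dx\,dy/|x-y|^{N}$, which has infinite mass near the diagonal, and you compensate with the $\{|x-y|>\delta\}$ / $\{|x-y|\le\delta\}$ split and the final $\delta\to0$ limit. The paper instead reparametrizes the Gagliardo quotient as $\mathcal{G}_{\alpha}u=(u(x)-u(y))/|x-y|^{\alpha}$ with $\alpha=s+N/\wp$ (see \eqref{moscofunc11}), so the integration is against the \emph{finite} measure $dx\,dy$ on $\Omega^{2}$ and no cutoff is needed: in (ii) the H\"older step and passage to the $L^{\infty}$-norm are done directly on $\Omega^{2}$, and in (i) the constant sequence $u_{j}=u$ is used as-is. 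Your $\delta$-truncation is a perfectly valid substitute; it is slightly longer, but arguably more transparent about how the $\wp$-dependent quotient $\mathcal{G}_{(s,\wp_j)}$ collapses to $\mathcal{G}_{s}$, a point the paper treats rather loosely. On the recovery side, your Lipschitz regularization and $(1-\varepsilon)$-rescaling are conservative: the paper simply uses $u_{j}=u$, invoking $\mathtt{K}^{t}\subset W^{s,\wp_{j}}_{0}(\Omega)$ (which, for $\wp_{j}\ge2$, follows by interpolating the pointwise constraint $\mathtt{A}|\mathcal{G}_{s}u|\le1$ against the $W^{s,2}$-energy of $u\in\mathtt{K}^{t}$, provided $\mathtt{A}$ is bounded away from $0$). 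Your version is more robust when $\mathtt{A}$ has no uniform positive lower bound, but it does add a nontrivial density step that you leave at the sketch level. Finally, the gap you honestly flag — establishing $u\in W^{s,2}_{0}(\Omega)$ in condition (ii) — is real, and the paper closes it exactly the way you anticipate: the H\"older bootstrap at $q=2$ gives a uniform $L^{2}(\Omega^{2})$-bound on $\mathcal{G}_{(s,\wp_{j})}u_{j}$, whose weak limit is $\mathcal{G}_{s}u$, placing $u$ in $W^{s,2}_{0}(\Omega)$; inserting that remark would make your proof complete and essentially equivalent to the paper's.
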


\begin{proof}
Let $t \in [0,T]$ be fixed. We claimed that
\begin{equation}\label{3.29}
\begin{split}
\forall u \in D(\varphi^{t}_{\infty}), \quad \exists (u_j)_{j\in \mathbb{N}} \subset \mathcal{H}; \\
u_j \to u \quad \textrm{strongly in $\mathcal{H}$}, \quad \varphi^{t}_{\wp_j}(u_j) \to \varphi^{t}_{\infty}(u).
\end{split}
\end{equation}
Indeed, let $u \in D(\varphi^{t}_{\infty}) = \mathtt{K}^t$ and $u_j := u$ for all $j \in \mathbb{N}$. Since $\mathtt{K}^t \subset W^{s,\wp_j}_{0}(\Omega)$ for all $j\in \mathbb{N}$, and taking into account \eqref{moscofunc11}, we see that

\begin{equation*}\label{3.30}
\begin{split}
0 \leqslant \liminf_{\wp_{j}\to \infty}\varphi^{t}_{\wp_{j}} (u_j) & = \liminf_{\wp_{j}\to \infty}\dfrac{1}{\wp_{j}} \int_{\Omega^{2}}\mathtt{A}(x,y,t) |\mathcal{G}_{s,\wp}u| ^{\wp_{j}}\, dx\,dy \\
&\leqslant \liminf_{\wp_{j}\to \infty}\frac{a_{0}}{\wp_{j}}\bigg(\int_{\Omega^{2}}\bigg[\sup_{x\neq y,\,\,(x,y)\in \Omega^{2}}\frac{|u(x)-u(y)|}{|x-y|^{\alpha}} \bigg]^{\wp_{j}}\,dx\,dy\bigg)\\
& \leqslant \dfrac{1}{\wp_{j}}|\Omega^{2}| \to 0 = \varphi^{t}_{\infty}(u) \quad \textrm{as} \; \wp_{j} \longrightarrow +\infty.
\end{split}
\end{equation*}
So, it is shown \eqref{3.29}.

Now, let us show that
\begin{equation}\label{3.31}
\begin{split}
\forall (u_j)_{j \in \mathbb{N}} \subset \mathcal{H} \quad \textrm{ such that } \; u_j \rightharpoonup u \; \textrm{ in} \; \mathcal{H}, \mbox{ then } \\
\varphi^{t}_{\infty}(u)\leqslant \liminf_{j \to +\infty} \varphi^{t}_{\wp_{j}} (u_j).
\end{split}
\end{equation}
For the case where $u \in D(\varphi^{t}_{\infty}) = \mathtt{K}^t$, it is clear that $\liminf\limits_{j \to +\infty} \varphi^{t}_{\wp_{j}}(u_j) \geqslant 0 = \varphi^{t}_{\infty}(u)$. In the case $u \notin \mathtt{K}^t$, we will argue by contradiction. Assume that
\begin{equation*}\label{3.32}
\exists (u_j) \subset H; \quad u_j \rightharpoonup u\mbox{ in } \; \mathcal{H}, \quad \liminf_{j \to +\infty} \varphi^{t}_{\wp_{j}}(u_j) \geqslant \varphi^{t}_{\infty}(u) = +\infty.
\end{equation*}
Then, we can take a subsequence $(j_{k})_{k \in\mathbb{N}}$ of $(j)$ such that
\begin{equation*}\label{3.33}
\varphi^{t}_{\wp_{j_{k}}}(u_{j_{k}}) \leqslant \mathfrak{C}\mbox{ for all } k\in \mathbb{N},
\end{equation*}
from where we infer that
\begin{equation}\label{3.34}
 \left[\int_{\Omega^{2}} \mathtt{A}(x,y,t) |\mathcal{G}_{(s,\wp)}u_{j_{k}}(x) | ^{\wp_{j_{k}}} dx\,dy \right]^{1/\wp_{j_{k}}} \\
\leqslant ( \wp_{j_{k}}\varphi^{t}_{\wp_{j_{k}}}(u_{j_{k}}))^{1/\wp_{j_{k}}} \to 1 \mbox{ as }  \jmath \to +\infty.
\end{equation}
For simplicity of notation, we write $\wp$ and $u_\wp$ for $\wp_{j_{k}}$ and $u_{j_{k}}$, respectively. Hence, by $(W_1)$ we have
\begin{equation*}\label{3.35}
\left( \int_{\Omega^{2}}(|\mathcal{G}_{(s,\wp)}u_{\wp}|)^{\wp} dx\,dy \right)^{1/\wp} \leqslant C,
\end{equation*}
which implies
\begin{equation*}\label{3.36}
\begin{split}
\left( \int_{\Omega^{2}} \left| \mathcal{G}_{(s,\wp)}u_{\wp} \right|^{q} dx\,dy \right)^{1/q} & \leqslant \left( \int_{\Omega^{2}} \left| \mathcal{G}_{(s,\wp)}u_{\wp} \right|^{\wp}dx\,dy \right)^{1/\wp} |\Omega^{2}|^{(\wp-q)/(\wp q)} \\
& \leqslant C(|\Omega^{2}|+1)^{1/q} \mbox{ for all } q \in [1,\wp].
\end{split}
\end{equation*}
Hence, for each $q \in (1,+\infty)$, we can take a subsequence $(\wp_q)$ of $(\wp)$ such that
\begin{equation*}\label{3.37}
\mathcal{G}_{(s,\wp)}u_{\wp_q} \rightharpoonup \mathcal{G}_{(s,\wp)}u \textrm{ in }L^q(\Omega^{2}).
\end{equation*}
Note that $u \in W^{s,2}_{0}(\Omega)$. In the rest of this proof, we drop $q$ in $\wp_q$. Furthermore, by $(W_{1})$, we can also derive
\begin{equation}\label{3.38}
\mathtt{A}(\cdot,\cdot,t)\mathcal{G}_{(s,\wp)}u_{\wp}\rightharpoonup \mathtt{A}(\cdot,\cdot,t) \mathcal{G}_{(s,\wp)}u \textrm{ in } L^q(\Omega^{2}).
\end{equation}

Then, from \eqref{3.34} and \eqref{3.38}, we have that
\begin{equation*}\label{3.39}
\begin{split}
\left[ \int_{\Omega^{2}}  \mathtt{A}(x,y,t)\left(\left| \mathcal{G}_{(s,\wp)}u \right|  \right)^{q} dx\,dy \right]^{1/q} & \leqslant \liminf_{\wp \to +\infty} \left[ \int_{\Omega^{2}} \mathtt{A}(x,y,t)\left| \mathcal{G}_{(s,\wp)}u_{\wp} \right| ^{q} dx\,dy \right]^{1/q} \\
& \leqslant \liminf_{\wp \to +\infty} \left[ \int_{\Omega^{2}}  \mathtt{A}(x,y,t)\left| \mathcal{G}_{(s,\wp)}u_{\wp} \right| ^{\wp} dx\,dy \right]^{1/\wp} |\Omega^{2}|^{(\wp - q)/(\wp q)} \\
& \leqslant \lim_{\wp \to +\infty} (\wp C)^{1/\wp} |\Omega^{2}|^{(\wp-q)/(\wp  q)} \\
& = |\Omega^{2}|^{1/q}.
\end{split}
\end{equation*}
Taking $q \to +\infty$, we get

\begin{equation*}\label{3.40}
\left| \mathtt{A}(x,y,t)\mathcal{G}_{(s,\wp)}u \right|_{L^{\infty}(\Omega^{2})} \leqslant 1,
\end{equation*}
which contradicts the fact that $u \notin \mathtt{K}^t$. Hence \eqref{3.31} holds true.
\end{proof}
\begin{proof}(Theorem \ref{teo 3.5})
Due to Theorem \ref{teo 2.7}, suffice to show that
\begin{equation}\label{3.22}
( \varphi^{t}_{\wp_j})_{t \in [0,T]} \in \mathcal{B}(\alpha_1,0,C_0,\lbrace \mathfrak{C}_{\varrho} \rbrace_{\varrho \geqslant 0}) \quad \textrm{for some constants } \; C_0, \; \lbrace \mathfrak{C}_{\varrho} \rbrace_{\varrho \geqslant 0},
\end{equation}
\begin{equation}\label{3.23}
(\varphi^{t}_{\infty})_{t \in [0,T]} \in \Gamma(\alpha_1,0), 
\end{equation}
\begin{equation}\label{3.24}
\varphi^{t}_{\wp_j} \to \varphi^{t}_{\infty} \quad \textrm{on $\mathcal{H}$ in the sense of Mosco as} \; \wp_j \to +\infty.
\end{equation}

We first prove \eqref{3.22}. It has already been proven that $(\varphi^{t}_{\wp} )_{t \in [0,T]} \in \Gamma(\alpha_1,0)$ for all $\wp \in (1,+\infty)$. Besides, it is clear that $\varphi^{t}_{\wp} \geqslant 0$ and that $g \equiv 0$ is such that
\begin{equation*}\label{3.25}
\dfrac{dg}{dt}(t) = 0, \quad \varphi^{t}_{\wp}(g(t))=0 \quad \forall t \in [0,T], \; \forall \wp \in (1,+\infty).
\end{equation*}
Thus, we can take $C_0=0$. Moreover, we have
\begin{equation*}\label{3.26}
\int_{0}^{T} |\alpha'_1(\varrho,t)|^{2}dt = \left( \dfrac{\varrho}{a_0} |\mathfrak{a}|_{L^{\infty}(\Omega^{2})} \right)^{2} \int_{0}^{T} |\sigma'(t)|^{2}dt =: \mathfrak{C}_{\varrho},
\end{equation*}
where we note that $\mathfrak{C}_{\varrho}$ is independent of $\wp$. Thus \eqref{3.22} holds.

Arguing as in the proof of Theorem \ref{teo 3.4}, we can obtain \eqref{3.23}. Indeed, $u(t)$ as in \eqref{3.12} satisfies
\begin{equation*}\label{3.27}
\left| \mathcal{G}_{(s,p)}u_{0}(x,t) \right| = \left| \dfrac{\mathtt{A}(x,y,t_{0})}{\mathtt{A}(x,y,t)} \mathcal{G}_{(s,p)}u_{0}(x) \right| \leqslant \mathtt{A}(x,y,t) \quad \textrm{for a.e.} \; (x,y) \in \Omega^{2} \; \textrm{ and all } \; t \in [0,T],
\end{equation*}
for each $u_0 \in \mathtt{K}^{t_0} $ and $t_{0}\in [0,T]$. Hence, we deduce that $\varphi^{t}_{\infty}(u(t)) = \varphi^{t_0}_{\infty}(u_0) = 0$ for all $t \in [0,T]$ and taking into account \eqref{3.15} it follows that $(\varphi^{t}_{\infty})_{t \in [0,T]} \in \Gamma(\alpha_1,0)$.

From Lemma \ref{lemma 3.6} we infer \eqref{3.24}. Therefore, from \eqref{3.22}, \eqref{3.23} and \eqref{3.24}, by Theorem \ref{teo 2.7} we can get our result.

\end{proof}
\subsection{Proof of Theorem \ref{teo 3.8} (Existence of solution periodic problem $(\mathcal{P}_{\varphi^{t}_{\wp},f}$))}\label{sec1}\label{limit}\quad\\

\begin{proof}
Let us show that $( \varphi^{t}_{\wp} )_{t \in [0,T]} \in \Gamma_{\Theta}(\alpha_1,0,\widetilde{C})$ for some positive constant $\widetilde{C}$. Once that the embedding $W^{s,2}_{0}(\Omega) \hookrightarrow \mathcal{H}$ is continuous, we get

\begin{equation*}\label{3.41}
\begin{split}
|u|^{2}_{\mathcal{H}} & \leqslant C \left| \mathcal{G}_{(s,\wp)}u \right|^{2}_{\mathcal{H}} \\
& \leqslant C\frac{1}{|\mathtt{A}|^{2}_{L^{\infty}(Q)}} \int_{\Omega^{2}}  \mathtt{A}(x,y,t) |\mathcal{G}_{(s,\wp)}u|^{2}\,dx\,dy \\
& \leqslant C\frac{1}{|\mathtt{A}|^{2}_{L^{\infty}(Q)} }\left\lbrace \dfrac{2}{\wp} \int_{\Omega^{2}} \mathtt{A}(x,y,t)|\mathcal{G}_{(s,\wp)}u| ^{\wp} dx\;dy + \dfrac{\wp-2}{\wp}|\Omega^{2}| \right\rbrace \\
& \leqslant 2C\frac{1}{|\mathtt{A}|^{2}_{L^{\infty}(Q)}} \lbrace \varphi^{t}_{\wp}(u)+|\Omega^{2}| \rbrace \quad \mbox{ for all }u \in D(\varphi^{t}_{\wp}), \; \mbox{ for all } \wp \geqslant 2,
\end{split}
\end{equation*}
where $Q := \Omega^{2} \times [0,T]$. Thus, once that $D(\varphi^{t}_{\wp}) = W^{s,\wp}_{0}(\Omega)$ for all $t \in [0,T]$ and $(\varphi^{t}_{\wp})_{t \in [0,T]} \in \Gamma(\alpha_1,0)$ for every $\wp \in (1,+\infty)$, we infer that $( \varphi^{t}_{\wp})_{t \in [0,T]} \in \Gamma_{\Theta}(\alpha_1,0,\widetilde{C})$ for some positive constant $\widetilde{C}$ independent of $\wp$. Then, from Theorem \ref{teo 2.9}, we get the existence of a strong solution $u_\wp$ for $(\mathcal{P}_{\varphi^{t}_{\wp},f})$. Furthermore, since $\varphi^{t}_{\wp}$ is strictly convex on $\mathcal{H}$, the periodic solution $u_\wp$ is unique.
\end{proof}

\subsection{Proof of Theorem \ref{teo 3.9} ( Asymptotic behavior of $u_{\wp}$ as $\wp \to +\infty$ for the periodic problem \eqref{2.18})}\label{sec1}\quad\\
Regarding the asymptotic behavior of $u_\wp$ as $\wp \to +\infty$, we have the following result.

\begin{proof}
We claimed that $(\varphi^{t}_{\wp_j})_{t \in [0,T]} \in \mathcal{B}_{\Theta}(\alpha_1,0,C^{\star},( \mathfrak{C}_{\varrho})_{\varrho \geqslant 0})$ for some constants $( \mathfrak{C}_{\varrho})_{\varrho \geqslant 0}$ independent of $j$. Indeed, we have already shown that $(\varphi^{t}_{\wp})_{t \in [0,T]} \in \Psi_{\Theta}(\alpha_1,0,C^{\star})$, where $C^{\star}>0$ is constant independent of $\wp$. Besides, once that $\mathtt{A}(x,y,T) \geqslant \mathtt{A}(x,y,0)$ for a.e. $(x,y) \in \Omega^{2}$, it is obvious that
\begin{equation*}\label{3.44}
\varphi^{0}_{\wp}(u) \leqslant \varphi^{T}_{\wp}(u)\mbox{ for all } u \in D(\varphi^{T}_{p}), \; \mbox{ for all } \wp \in (1,+\infty).
\end{equation*}
To prove this statement we argue as in the proof of Theorem \ref{teo 3.5}.

Now, we prove that $(\varphi^{t}_{\wp_j})_{j \in \mathbb{N}}$ satisfies $(W_{1})$. Let $\mu > 0$ and $t \in [0,T]$ be fixed and let $(u_j)_{j\in \mathbb{N}}$ be a sequence in $\mathcal{H}$ such that
\begin{equation*}\label{3.45}
\varphi^{t}_{\wp_j}(u_j) + |u_j|_{\mathcal{H}} \leqslant \mu \mbox{ for all } j \in \mathbb{N}.
\end{equation*}
For every $\wp_j \geqslant 2$, we have 
\begin{equation*}\label{3.46}
\begin{split}
\left( \int_{\Omega^{2}} | \mathcal{G}_{(s,\wp_{j})}u_{j}|^{2}\, dx\,dy \right)^{1/2} & \leqslant \frac{1}{|\mathtt{A}(\cdot,\cdot,t)|_{L^{\infty}(\Omega^{2})}} \left(\int_{\Omega^{2}} \mathtt{A}(x,y,t)|\mathcal{G}_{(s,\wp_{j})}u_{j} | ^{\wp_j}\,dx\,dy \right)^{1/\wp_j} |\Omega^{2}|^{(\wp_j -2)/(2\wp_j)} \\
& \leqslant \frac{1}{|\mathtt{A}(\cdot,\cdot,t)|_{L^{\infty}(\Omega^{2})} }(\wp_{j} \mu)^{1/\wp_{j}} |\Omega^{2}|^{(\wp_{j} -2)/(2\wp_{j})} \leqslant \mathfrak{C},
\end{split}
\end{equation*}
where $\mathfrak{C}$ is a constant independent of $j$. Thus, since the embedding $W^{s,2}_{0}(\Omega) \hookrightarrow \mathcal{H}$ is compact, it follows that $(u_j)_{j\in \mathbb{N}}$ is precompact in $\mathcal{H}$, which implies $(W_{1})$ with $\varphi^{t}_{j}$ substitute by $\varphi^{t}_{\wp_j}$. 

Furthermore, from Lemma \ref{lemma 3.6} 
\begin{equation*}\label{3.47}
\varphi^{t}_{\wp_j} \to \varphi^{t}_{\infty}\textrm{ on $\mathcal{H}$ in the sense of Mosco}.
\end{equation*}
Therefore from Theorem \ref{teo 2.10}, we can take a subsequence $(j_k)_{k\in \mathbb{N}}$ of $(j)_{j\in  \mathbb{N}}$ such that the unique strong solution $u_{j_k}$ of $(\mathcal{P}_{\varphi^{t}_{\wp_{j_k}},f_{j_k}})$ satisfies
\begin{equation*}\label{3.48}
u_{j_k} \to u \quad \textrm{ in } \; \mathscr{C}([0,T];H), \; \textrm{weakly in} \; W^{1,2}(0,T;\mathcal{H});
\end{equation*}
furthermore, $u$ is a strong solution of $(\mathcal{P}_{\varphi^{t}_{\infty},f})$.
\end{proof}

\begin{remark}\label{rem 3.10}
As stated in the Theorem \ref{teo 3.8}, $(\mathcal{P}_{\varphi^{t}_{\wp},f})$ has a unique strong solution. On the other hand, $(\mathcal{P}_{\varphi^{t}_{\infty},f})$ may have multiple strong solutions. Indeed, let $\widetilde{t} \in [0,T]$ be a minimizer of $\sigma$, that is, $0 < \sigma(\widetilde{t}) \leqslant \sigma(t)$ for all $t \in [0,T]$. Then $\mathtt{K}^{\widetilde{t}} \subset \mathtt{K}^t$ for all $t \in [0,T]$. Thus, for every $u_0 \in \mathtt{K}^{\widetilde{t}}$, $\partial\varphi^{t}_{\infty}(u_0) \ni 0$ for all $t \in [0,T]$ and $u \equiv u_0$ is a strong solution for $(\mathcal{P}_{\varphi^{t}_{\infty},0})$. Thus since $\mathtt{K}^{\widetilde{t}}$ has infinitely many elements, $(\mathcal{P}_{\varphi^{t}_{\infty},0})$ has infinitely many strong solutions.
\end{remark}

Moreover, since $u_{\wp} \equiv 0$ is a unique strong solution of $(\mathcal{P}_{\varphi^{t}_{\wp},0})$ for all $\wp \in (1,+\infty)$, $u_{\wp}$ never converges to any strong solution $u$ of $(\mathcal{P}_{\varphi^{t}_{\infty},0})$ except $u \equiv 0$ as $\wp \to +\infty$.
\subsection{Proof of Theorem \ref{t1} (Asymptotic behavior of the limit problem as $\wp_{\jmath}(\cdot,\cdot)\to\infty$)}\label{sec1}\label{limit}\quad\\
Here, we shall investigate the limiting behavior as $\wp_{\jmath(\cdot)} \to \infty$ of the solution  $u_{\wp_{\jmath}(\cdot)} = u_{\wp_{\jmath}(x,y)}(x,t)$ for \eqref{aproximado}
with $T>0$ and the sequences $(u^{0}_{\jmath})_{\jmath\in\mathbb{N}}\subset L^{2}(\Omega)$ and $(f_{\jmath})_{\jmath\in\mathbb{N}}\subset L^{2}(0,T;L^{2}(\Omega)),$ satisfying

\begin{equation*}
\begin{cases}{}
& u^{0}_{\wp_{\jmath}(\cdot)}\to  u_0 \;\; \mbox{strongly in} \;\; L^{2}(\Omega),\\
& f_{\wp_{\jmath}(\cdot)} \to f \;\; \mbox{strongly in} \;\; L^{2}(0,T;L^{2}(\Omega)).
\end{cases}
\end{equation*}

It should be noted that in \cite{akagi,mayte}, the authors study the limit of solutions as $\wp(\cdot) \to \infty$ has been studied in the constant/variable exponent case to local/nonlocal problems involving the $\wp(\cdot)$-Laplacian and fractional $\wp$-Laplacian for evolution problems and stationary problems respectively.

 As in \cite{akagi,elardjh}, first we characterize the limit of solutions for \eqref{aproximado} via the notion of Mosco convergence of a sequence $(\Phi_{\jmath})_{\jmath\in \mathbb{N}}$ of functionals associated with fractional $\wp_{\jmath}$-Laplacian

\begin{equation*}
\Phi_{\jmath}(u) : =
\begin{cases}
\displaystyle\int_{\Omega^{2}} \dfrac{1}{\wp_{\jmath}(x,y)} \dfrac{|u(x)-u(y)|^{\wp_{\jmath}(x,y)}}{|x-y|^{N+\wp_{\jmath}(x,y)}}\;dxdy & \;\; \mbox{if} \;\; u \in  \mathbb{X}, \\
+ \infty & \;\; \mbox{if} \;\; u \in \mathcal{H} \setminus  \mathbb{X},
\end{cases}
\end{equation*}
where $ \mathbb{X}=\mathcal{W}_{0}^{s,\wp(\cdot,\cdot)}(\Omega),$ $\mathcal{H}=L^{2}(\Omega)$ and $\mathcal{D}(\Phi_{\jmath}) = \lbrace u \in \mathcal{H} : \Phi_{\jmath}(u) < + \infty \rbrace$ we denote the effective domain of $\Phi$ and by exploiting a general theory for the convergence as $\wp_{\jmath(\cdot)} \to \infty$ of solutions for abstract evolution equations governed by subdifferential operators $\partial\Phi_{\jmath}$ of $\Phi_{\jmath}$(see the results of Section \ref{sec32} and Proposition \ref{monotone}) in a Hilbert space $\mathcal{H}$:
\begin{equation*}
\begin{cases}
\dfrac{d u_{\wp_{\jmath}(\cdot)}}{dt} + \partial\Phi_{\jmath}(u_{\wp_{\jmath}(\cdot)}(t)) \ni  f_{\jmath}(t) \;\; \mbox{in} \;\; \mathcal{H} \;\; \mbox{for} \;\; t \in (0,T),\\
u_{\wp_{\jmath}(\cdot)}(0) = u^{0}_{\jmath}.
\end{cases}
\end{equation*}
\quad\\


\noindent To prove Theorem \ref{t1}, we define the functionals $\Phi_{\jmath} : \mathcal{H} := L^{2}(\Omega) \to [0,\infty]$
by

\begin{equation*}
\begin{split}
\Phi_{\jmath}(\xi)& =
\begin{cases}
\displaystyle\int_{\Omega^{2}}\dfrac{1}{\wp_{\jmath}(x,y)} \dfrac{|\xi(x) - \xi(y)|^{\wp_{\jmath}(x,y)}}{|x-y|^{N+s\wp_{\jmath}(x,y)}}dxdy &  \;\; \mbox{if} \;\; \xi \in X,\\ 
+ \infty &  \;\; \mbox{if} \;\; \xi \in \mathcal{H} \setminus X
\end{cases}
\end{split}
\end{equation*}

which can be rewritten as

\begin{equation}\label{moscofunc}
\begin{split}
\Phi_{\jmath}(\xi)& =
\begin{cases}
\displaystyle\int_{\Omega^{2}}\dfrac{1}{\wp_{\jmath}(x,y)} \dfrac{|\xi(x) - \xi(y)|^{\wp_{\jmath}(x,y)}}{|x-y|^{\alpha \wp_{\jmath}(x,y)}}dxdy &  \;\; \mbox{if} \;\; \xi \in X,\\ 
+ \infty &  \;\; \mbox{if} \;\; \xi \in \mathcal{H} \setminus X,
\end{cases}
\end{split}
\end{equation}
with $s=\alpha-\frac{N}{\wp_{\jmath}(x,y)},$ $\alpha\in (0,1).$

The problem $\eqref{aproximado}$ can be rewritten as a the problem \eqref{m1}-\eqref{m2} with $\Phi_{\jmath}$ in $\mathcal{H} = L^{2}(\Omega)$.

Before prove the first main result, we will prove the Mosco convergence of the functional $\Phi_{\jmath}$ as $\wp_{\jmath}(\cdot,\cdot) \to \infty$
 to a convex function $\Phi_{\infty}$ on $L^{2}(\Omega)$ under appropriate conditions $\wp_{\jmath}(\cdot,\cdot) \to \infty$ over $\Omega^{2}.$


\begin{proposition}\label{p2}
Let $(\wp_{\jmath}(\cdot,\cdot))_{\jmath\in \mathbb{N}}$ as in Theorem \ref{t1}, $\Phi_{\jmath}$ converges to $\Phi_{\infty}$ on $L^{2}(\Omega)$ in the sense of Mosco as $\jmath \to \infty$, where $\Phi_{\infty}$ denotes the indicator function over the subset $\mathbb{K}_{\infty} $ of $L^{2}(\Omega)$, that is, the function $\Phi_{\infty}:L^{2}(\Omega)\to [0,\infty]$ is given by

\begin{equation*}
\Phi_{\infty}(\xi) := 
\begin{cases}
0 \;\; \mbox{if} \;\; \xi \in \mathbb{K}_{\infty},\\
+ \infty \;\; \mbox{otherwise}.
\end{cases}
\end{equation*}
\end{proposition}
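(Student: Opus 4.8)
The plan is to verify directly the two conditions in the definition of Mosco convergence (Definition \ref{M}) for the sequence $(\Phi_{\jmath})$ and the limit $\Phi_{\infty}$ on $\mathcal{H}=L^{2}(\Omega)$; here $\mathcal{D}(\Phi_{\infty})=\mathbb{K}_{\infty}$ is a nonempty (it contains $0$), convex and $L^{2}$-closed set, so $\Phi_{\infty}$ is proper, convex and lower semicontinuous. Since $\wp^{-}_{\jmath}\to\infty$ by \eqref{cs1}, we may assume $\wp_{\jmath}(x,y)\geqslant 2$ on $\overline{\Omega^{2}}$ for all large $\jmath$.

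For the recovery–sequence condition, given $u\in\mathcal{D}(\Phi_{\infty})=\mathbb{K}_{\infty}$ I would take $u_{\jmath}:=u$. Since $|\mathcal{G}_{s}u(x,y)|\leqslant 1$ a.e. and $\wp_{\jmath}(x,y)\geqslant 2$, the pointwise inequality $|\mathcal{G}_{s}u|^{\wp_{\jmath}(x,y)}\leqslant|\mathcal{G}_{s}u|^{2}$ gives
$$\Phi_{\jmath}(u)=\int_{\Omega^{2}}\frac{1}{\wp_{\jmath}(x,y)}\,\frac{|\mathcal{G}_{s}u(x,y)|^{\wp_{\jmath}(x,y)}}{|x-y|^{N}}\,dx\,dy\leqslant\frac{1}{\wp^{-}_{\jmath}}\int_{\Omega^{2}}\frac{|u(x)-u(y)|^{2}}{|x-y|^{N+2s}}\,dx\,dy=\frac{1}{\wp^{-}_{\jmath}}\,[u]^{2}_{W^{s,2}(\Omega)}\longrightarrow 0=\Phi_{\infty}(u),$$
where we used $u\in W^{s,2}_{0}(\Omega)$ and $\wp^{-}_{\jmath}\to\infty$; in particular $u\in\mathcal{D}(\Phi_{\jmath})$ eventually, and the constant sequence converges to $u$ in $L^{2}(\Omega)$ (the finitely many exceptional indices are harmless).

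The core of the proof is the lower–bound (liminf) condition: if $u_{\jmath}\rightharpoonup u$ in $L^{2}(\Omega)$ then $\Phi_{\infty}(u)\leqslant\liminf_{\jmath\to\infty}\Phi_{\jmath}(u_{\jmath})$. We may assume the right–hand side is finite, pass to a subsequence (not relabeled) realizing the liminf and satisfying $\sup_{\jmath}\Phi_{\jmath}(u_{\jmath})\leqslant M<\infty$, and then it suffices to show $u\in\mathbb{K}_{\infty}$. From $\rho_{\jmath}:=\rho_{s,\wp_{\jmath}(\cdot,\cdot)}(u_{\jmath})=\int_{\Omega^{2}}|\mathcal{G}_{(s,\wp_{\jmath})}u_{\jmath}|^{\wp_{\jmath}(x,y)}\,dx\,dy\leqslant\wp^{+}_{\jmath}M$, Proposition \ref{lw0} together with the second limit in \eqref{cs1} (which yields $(\wp^{+}_{\jmath}M)^{1/\wp^{-}_{\jmath}}\to 1$) gives $\limsup_{\jmath}[u_{\jmath}]_{\Omega}^{s,\wp_{\jmath}(\cdot,\cdot)}\leqslant 1$. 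For fixed $m\in[2,\infty)$ and $\jmath$ large, Hölder's inequality on the finite measure space $(\Omega^{2},dx\,dy)$ combined with the elementary bound $|\mathcal{G}_{(s,\wp_{\jmath})}u_{\jmath}|^{\wp^{-}_{\jmath}}\leqslant|\mathcal{G}_{(s,\wp_{\jmath})}u_{\jmath}|^{\wp_{\jmath}(x,y)}+1$ yields
$$\Big(\int_{\Omega^{2}}|\mathcal{G}_{(s,\wp_{\jmath})}u_{\jmath}|^{m}\,dx\,dy\Big)^{1/m}\leqslant\big(\wp^{+}_{\jmath}M+|\Omega^{2}|\big)^{1/\wp^{-}_{\jmath}}\,|\Omega^{2}|^{\frac{1}{m}-\frac{1}{\wp^{-}_{\jmath}}}\longrightarrow|\Omega^{2}|^{1/m},$$
so $(\mathcal{G}_{(s,\wp_{\jmath})}u_{\jmath})$ is bounded in $L^{m}(\Omega^{2})$ for every $m$. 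Standard manipulations (Hölder on $\Omega^{2}$ and the fractional Sobolev embeddings, again using $\wp^{-}_{\jmath}\to\infty$) also give that $(u_{\jmath})$ is bounded in $W^{\sigma_{0},2}_{0}(\Omega)$ for some $\sigma_{0}\in(0,s)$ independent of $\jmath$, so by compactness a further subsequence satisfies $u_{\jmath}\to u$ strongly in $L^{2}(\Omega)$ and a.e. in $\Omega$; hence $\mathcal{G}_{(s,\wp_{\jmath})}u_{\jmath}\to\mathcal{G}_{s}u$ a.e. in $\Omega^{2}$. Combining a.e. convergence with the uniform $L^{m}$-bound identifies $\mathcal{G}_{(s,\wp_{\jmath})}u_{\jmath}\rightharpoonup\mathcal{G}_{s}u$ weakly in $L^{m}(\Omega^{2})$, and weak lower semicontinuity of the $L^{m}$-norm gives $\|\mathcal{G}_{s}u\|_{L^{m}(\Omega^{2})}\leqslant|\Omega^{2}|^{1/m}$; letting $m\to\infty$ we get $|\mathcal{G}_{s}u|\leqslant 1$ a.e. in $\Omega^{2}$. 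Together with $u\in W^{s,2}_{0}(\Omega)$ — which follows from the a priori bounds and a.e. convergence via Fatou, exactly as in the proof of Lemma \ref{lemma 3.6} — this gives $u\in\mathbb{K}_{\infty}$, so $\Phi_{\infty}(u)=0\leqslant\liminf_{\jmath}\Phi_{\jmath}(u_{\jmath})$, which finishes the proof.

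The hard part is precisely this liminf step. The delicacy is that all the estimates must be made uniform in $\jmath$ although both the exponent $\wp_{\jmath}(x,y)$ and the normalization $|x-y|^{N/\wp_{\jmath}(x,y)}$ hidden in $\mathcal{G}_{(s,\wp_{\jmath})}$ vary, and one has to cope with the non–integrable diagonal singularity $|x-y|^{-N}$; the canonical devices for the local $\wp(\cdot)$-Laplacian do not transfer. The crucial leverage is the condition $(\wp^{+}_{\jmath})^{1/\wp^{-}_{\jmath}}\to 1$ in \eqref{cs1}, which forces $(\wp^{+}_{\jmath}M)^{1/\wp^{-}_{\jmath}}\to 1$ and thereby degenerates the $L^{\wp_{\jmath}(\cdot,\cdot)}$-type modular bounds exactly to the sharp constraint $|\mathcal{G}_{s}u|\leqslant 1$ defining $\mathbb{K}_{\infty}$; a secondary technical point is checking that the exponent–dependent factor $|x-y|^{N/\wp_{\jmath}(x,y)}$ disappears in the limit, so that the weak limit of $\mathcal{G}_{(s,\wp_{\jmath})}u_{\jmath}$ is genuinely $\mathcal{G}_{s}u$.
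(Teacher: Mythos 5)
Your proposal follows the same overall strategy as the paper's proof of Proposition~\ref{p2} — verify Mosco conditions (i) and (ii), with (i) via the constant recovery sequence and (ii) via a uniform modular bound $\Phi_\jmath(u_\jmath)\leqslant M$ promoted to uniform $L^m$-type bounds on the Gagliardo quotient, then letting $m\to\infty$ using $(\wp_\jmath^+)^{1/\wp_\jmath^-}\to 1$ to force $|\mathcal{G}_s u|\leqslant 1$. The differences are worth noting. For the recovery part, your domination $|\mathcal{G}_s u|^{\wp_\jmath(x,y)}\leqslant|\mathcal{G}_s u|^2$ followed by $\Phi_\jmath(u)\leqslant\frac{1}{\wp_\jmath^-}[u]_{W^{s,2}}^2$ is actually cleaner and more robust than the paper's estimate, which factors through $\sup_{x\neq y}|u(x)-u(y)|/|x-y|^\alpha$ with $\alpha=s+N/\wp_\jmath(x,y)$; this latter sup need not be finite for $u\in\mathbb{K}_\infty$ near the diagonal, so your route avoids that awkwardness. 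For the liminf part, the paper stays with weak convergence: it derives Luxemburg-norm bounds $\lVert\mathcal{G}_{(s,\wp_\jmath)}u_\jmath\rVert_{L^{\wp_\jmath(\cdot,\cdot)}}\leqslant(\wp_\jmath^+\mathfrak{C})^{1/\wp_\jmath^-}$ via Proposition~\ref{lw0}, then uses the variable-exponent H\"older inequality (Lemma~\ref{holder}) to pass to fixed-exponent $L^\theta$ bounds, identifies the weak $L^\theta$-limit $\mathcal{G}_{(s,\theta)}u_\jmath\rightharpoonup\mathcal{G}_{(s,\theta)}u$ directly from the weak $L^2$-convergence of $u_\jmath$, and then sends $\theta\to\infty$. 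You instead upgrade the weak $L^2$-convergence to strong (and a.e.) convergence via compactness, which is valid but adds machinery; moreover, the ``standard manipulations'' giving boundedness in $W^{\sigma_0,2}_0$ need care: the $L^2$-bound on $\mathcal{G}_{(s,\wp_\jmath)}u_\jmath$ alone does not yield a $W^{\sigma_0,2}$-bound when $s<N/2$, and you really should work with a large $m>N/s$ and take $\sigma_0\in(0,s-N/m)$ (then $W^{\sigma_0,m}_0(\Omega)\hookrightarrow L^2(\Omega)$ compactly). Once that is spelled out, both arguments arrive at the same conclusion; the paper's direct weak-limit identification is tighter but more entangled with the variable-exponent Luxemburg norm apparatus, while your compactness route is conceptually more elementary at the cost of the exponent bookkeeping above.
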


\begin{proof}
Let $u \in \mathcal{D}(\Phi_{\infty}) = \mathbb{K}_{\infty}$ be fixed. Defining the sequence $u_{\wp_{\jmath}(\cdot)} \equiv u$ in $\mathcal{D}(\Phi_{\infty}),$ taking into account \eqref{moscofunc} and note that 

\begin{equation*}
\begin{split}
0 \leqslant \lim_{\wp_{\jmath}\to \infty} \Phi_{\jmath}(u) & = \lim_{\wp_{j}\to \infty} \int_{\Omega^{2}}\dfrac{1}{\wp_{\jmath}(x,y)} \dfrac{|u(x) - u(y)|^{\wp_{\jmath}(x,y)}}{|x-y|^{\alpha\wp_{\jmath}(x,y)}}\; dx\,dy \\
&\leqslant\lim_{\wp_{\jmath}\to \infty}  \bigg(\int_{\Omega^{2}}\frac{1}{\wp_{\jmath}^{-}}\bigg[ \sup_{x\neq y,\,\,(x,y)\in\Omega^{2}}\frac{|u(x)-u(y)|}{|x-y|^{\alpha}}   \bigg]^{\wp_{\jmath}(x,y)}\,dx\,dy \bigg) \\
& = 0
\end{split}
\end{equation*}
\noindent as $\jmath \to \infty,$ where we have used the fact that $u_{\wp_{\jmath}(\cdot)} = u \in \mathcal{D}(\Phi_{\jmath})$ for each $\jmath \in \mathbb{N}$. Indeed, since $u \in W^{s,2}_{0}(\Omega)$ and $\mathcal{G}_{s}u(x,y) \in L^{\infty}(\Omega^{2},d\mu)\subset L^{\tau}(\Omega^{2},d\mu)$ for all $\tau\geqslant 1$, thus $u\in W_{0}^{s,\tau}(\Omega)$ for all $\tau\in [1,\infty),$ then $u \in \mathcal{W}^{s,\wp_{\jmath}(\cdot,\cdot)}_{0}(\Omega) = \mathcal{D}(\Phi_{\jmath}).$
Thus $\Phi_{\jmath}(u_{\wp_{\jmath}(\cdot)}) \to \Phi_{\infty}(u)$ as $\jmath \to \infty$. Therefore, we have proved \textbf{(i)} of Definition \ref{M}. 

To prove \textbf{(ii)} of Definition \ref{M}, let the sequence $(u_{\wp_{\jmath}(\cdot)})_{\jmath \in \mathbb{N}}\subset L^{2}(\Omega)$ such that $u_{\wp_{\jmath}(\cdot)} \rightharpoonup u$ in $L^{2}(\Omega)$.
\\

\noindent \textbf{Claim}. We assert that

\begin{equation}\label{13}
\Phi_{\infty}(u) \leqslant \liminf_{\jmath \to \infty} \Phi_{\jmath}(u_{\wp_{\jmath}(\cdot)}).
\end{equation}
\noindent  Indeed, if $\liminf\limits_{\jmath \to \infty}\Phi_{\jmath}(u_{\wp_{\jmath}(\cdot)})=\infty$, it is clear that \eqref{13} is satisfied. Now, we have to prove that, when  $\liminf\limits_{\jmath \to \infty}\Phi_{\jmath}(u_{\wp_{\jmath}(\cdot)})<\infty$, up to a subsequence, we have
\begin{equation*}
\Phi_{\jmath}(u_{\wp_{\jmath}(\cdot)}) \leqslant \mathfrak{C},
\end{equation*}
\noindent  for some positive constant $\mathfrak{C}$ independent of $\jmath.$ 

 Note that

\begin{equation*}
\begin{split}
1 \geqslant \dfrac{\Phi_{\jmath}(u_{\wp_{\jmath}(\cdot)})}{\mathfrak{C}} &= \displaystyle\int_{\Omega^{2}} \left[ \dfrac{1}{(\wp_{\jmath}(x,y)\mathfrak{C})^{\frac{1}{\wp_{\jmath}(x,y)}}} \dfrac{|u_{\wp_{\jmath}(\cdot)}(x)-u_{\wp_{\jmath}(\cdot)}(y)|}{|x-y|^{\frac{N}{\wp_{\jmath}(x,y)}+s}} \right]^{\wp_{\jmath}(x,y)}\; dxdy\\
&\geqslant \int_{\Omega^{2}} \left[ \dfrac{1}{(\wp_{\jmath}^{+}\mathfrak{C})^{\frac{1}{\wp_{\jmath}^{-}}}} \dfrac{|u_{\wp_{\jmath}(\cdot)}(x)-u_{\wp_{\jmath}(\cdot)}(y)|}{|x-y|^{\frac{N}{\wp_{\jmath}(x,y)}+s}} \right]^{\wp_{\jmath}(x,y)}\; dxdy,
\end{split}
\end{equation*}
for $\jmath$ large enough. 
\\

Hence, from Proposition \ref{lw0}, we have
\begin{equation}\label{estgrad}
\left\lVert \dfrac{|u_{\wp_{\jmath}(\cdot)}(x)-u_{\wp_{\jmath}(\cdot)}(y)|}{|x-y|^{\frac{N}{\wp_{\jmath}(x,y)}+s}} \right\rVert_{L^{\wp_{\jmath}(\cdot,\cdot)}(\Omega^{2})} \leqslant (\wp^{+}_{\jmath} \mathfrak{C})^{\frac{1}{\wp^{-}_{\jmath}}}.
\end{equation}

\noindent  So, once that $\wp^{-}_{\jmath} - 1 < \wp_{\jmath}(x,y)$ for a.a. $(x,y) \in \Omega^{2}$ and  from Lemma \ref{holder}, we see that

\begin{equation*}
\begin{split}
&\int_{\Omega^{2}} \left( \dfrac{|u_{\wp_{\jmath}(\cdot)}(x)-u_{\wp_{\jmath}(\cdot)}(y)|}{|x-y|^{\frac{N}{\wp_{\jmath}(x,y)}+s}} \right)^{\wp^{-}_{\jmath}-1}\; dx\,dy 
\\ &\leqslant C \left\lVert \left( \dfrac{|u_{\wp_{\jmath}(\cdot)}(x)-u_{\wp_{\jmath}(\cdot)}(y)|}{|x-y|^{\frac{N}{\wp_{\jmath}(x,y)}+s}} \right)^{\wp^{-}_{\jmath}-1} \right\rVert_{L^{\frac{\wp_{\jmath}(\cdot,\cdot)}{\wp^{-}_{\jmath}-1}}(\Omega^{2})} \lVert 1 \lVert_{L^{\mathfrak{z}_{\jmath}(\cdot,\cdot)}(\Omega^{2})},
\end{split}
\end{equation*}
for some $C>0,$ where $\mathfrak{z}_{\jmath}\in \mathscr{C}(\Omega^{2},(0,1))$ is such that
\begin{equation*}
\dfrac{\wp^{-}_{\jmath}-1}{\wp_{\jmath}(x,y)} + \dfrac{1}{\mathfrak{z}_{\jmath}(x,y)} = 1.
\end{equation*}
\noindent On the other hand, by definition, we have
\begin{equation*}
\begin{split}
&\left\| \left( \dfrac{|u_{\wp_{\jmath}(\cdot)}(x)-u_{\wp_{\jmath}(\cdot)}(y)|}{|x-y|^{\frac{N}{\wp_{\jmath}(x,y)}+s}} \right)^{\wp^{-}_{\jmath}-1} \right\|_{L^{\frac{\wp_{\jmath}(\cdot,\cdot)}{\wp^{-}_{\jmath}-1}}(\Omega^{2})}  =\\
 &=\inf \left\lbrace \mu > 0 : \int_{\Omega^{2}} \left( \dfrac{1}{\mu^{\frac{1}{(\wp^{-}_{\jmath}-1)}}} \dfrac{|u_{\wp_{\jmath}(\cdot)}(x)-u_{\wp_{\jmath}(\cdot)}(y)|}{|x-y|^{\frac{N}{\wp_{\jmath}(x,y)+s}}} \right)^{\wp_{\jmath}(x,y)} \; dxdy \leqslant 1 \right\rbrace \\
& = \left\lVert \dfrac{|u_{\wp_{\jmath}(\cdot)}(x)-u_{\wp_{\jmath}(\cdot)}(y)|}{|x-y|^{\frac{N}{\wp_{\jmath}(x,y)}+s}} \right\rVert^{\wp^{-}_{\jmath}-1}_{L^{\wp_{\jmath}(\cdot,\cdot)}(\Omega^{2})}.
\end{split}
\end{equation*}
Furthermore, once that $\mathfrak{z}_{\jmath}(\cdot,\cdot) = \frac{\wp_{\jmath}(\cdot,\cdot)}{\wp_{\jmath}(\cdot,\cdot) - \wp^{-}_{\jmath} + 1}>1$, from \cite[Lema 3.2.11]{diening}, we see that $\lVert 1 \rVert_{L^{\mathfrak{z}_{\jmath}(\cdot,\cdot)}(\Omega)} \leqslant \max \lbrace 1, |\Omega^{2}| \rbrace$. 
\noindent Hence, from Lemma \ref{holder} and Proposition \ref{lw0} and \eqref{estgrad}, we see that
\begin{equation}\label{14}
\begin{split}
\displaystyle\int_{\Omega^{2}}\left( \dfrac{|u_{\wp_{\jmath}(\cdot)}(x)-u_{\wp_{\jmath}(\cdot)}(y)|}{|x-y|^{\frac{N}{\wp_{\jmath}(x,y)}+s}} \right)^{\wp^{-}_{\jmath}-1} dxdy & \leqslant \mathfrak{C} \max \lbrace 1, |\Omega^{2}| \rbrace \left\lVert \dfrac{u_{\wp_{\jmath}(\cdot)}(x)-u_{\wp_{\jmath}(\cdot)}(y)}{|x-y|^{\frac{N}{\wp_{\jmath}(x,y)}+s}} \right\rVert^{\wp^{-}_{\jmath}-1}_{L^{\wp_{\jmath}(\cdot,\cdot)}(\Omega^{2})} \\
& \leqslant \mathfrak{C} \max \lbrace 1, |\Omega^{2}| \rbrace (\wp^{+}_{\jmath}C)^{\frac{(\wp^{-}_{\jmath}-1)}{\wp^{-}_{\jmath}}}.
\end{split}
\end{equation}

\noindent On the other hand, from H\"{o}lder inequality, we have
\begin{equation}\label{g1}
\begin{split}
&\Bigg(\int_{\Omega^{2}} \bigg[\dfrac{|u_{\wp_{\jmath}(\cdot)}(x)-u_{\wp_{\jmath}(\cdot)}(y)|}{|x-y|^{\beta}}\bigg]^{\theta}\,dx\,dy\Bigg)^{\frac{1}{\theta}}\\& \leqslant    (|\Omega^{2} | + 1)^{\frac{1}{\theta}}  \Bigg(\int_{\Omega^{2}} \bigg[\dfrac{|u_{\wp_{\jmath}(\cdot)}(x)-u_{\wp_{\jmath}(\cdot)}(y)|}{|x-y|^{\beta}}\bigg]^{\wp^{-}_{\jmath}-1}\,dx\,dy\Bigg)^{\frac{1}{\wp^{-}_{\jmath}-1}}
\end{split}
\end{equation}
for any  $\theta \in [1, \wp^{-}_{\jmath}-1 ),$ where $s=\beta-\frac{N}{\theta}$ and $\beta\in (0,1)$. Thus, by \eqref{14} and \eqref{g1}, the sequence $( \mathcal{G}_{(s,\theta)}u_{\wp_{\jmath}(\cdot)})_{\jmath\in \mathbb{N}}$ is bounded in $L^{\theta}(\Omega^{2}),$ for each $\theta > 1$, up to a subsequence, we get
\begin{equation*}
\mathcal{G}_{(s,\theta)}u_{\wp_{\jmath}(\cdot)} \rightharpoonup \mathcal{G}_{(s,\theta)}u \;\; \mbox{in} \;\; L^{\theta}(\Omega^{2}) \;\; \mbox{as} \;\; \jmath \to \infty,
\end{equation*}
and $u \in W^{s,2}_{0}(\Omega)$ (see \cite[Lemma 2.4.14]{diening}).

\noindent  Hence, taking $\wp^{-}_{\jmath} \to \infty,$ from \eqref{g1} and once that $(\wp^{+}_{\jmath})^{\frac{1}{\wp^{-}_{\jmath}}} \to 1$, we have

\begin{equation*}
\begin{split}
\Bigg(\int_{\Omega^{2}} \bigg[\dfrac{|u(x)-u(y)|}{|x-y|^{\beta}}\bigg]^{\theta}\,dx\,dy\Bigg)^{\frac{1}{\theta}}& \leqslant \liminf_{\jmath \to \infty} \Bigg(\int_{\Omega^{2}} \bigg[\dfrac{|u_{\wp_{\jmath}(\cdot)}(x)-u_{\wp_{\jmath}(\cdot)}(y)|}{|x-y|^{\beta}}\bigg]^{\theta}\,dx\,dy\Bigg)^{\frac{1}{\theta}} \\
& \leqslant \lim_{\jmath \to \infty} (|\Omega^{2}| + 1)^{\frac{1}{\theta}} \cdot (C\max \lbrace 1,|\Omega^{2}| \rbrace )^{\frac{1}{\wp^{-}_{\jmath}-1}} \cdot (\wp^{+}_{\jmath}\mathfrak{C})^{\frac{1}{\wp^{-}_{\jmath}}} \\
& = (|\Omega^{2}| + 1)^{\frac{1}{\theta}}.
\end{split}
\end{equation*}

\noindent  Then, taking $\theta \to \infty$ and Fatou's Lemma, we achieved 

\begin{equation*}
\begin{split}
\sup_{(x,y)\in \Omega^{2},\,\,x\neq y}\frac{|u(x)-u(y)|}{|x-y|^{s}}&=\lim_{\theta\to \infty}\Bigg(\int_{\Omega^{2}} \bigg[\dfrac{|u(x)-u(y)|}{|x-y|^{\beta}}\bigg]^{\theta}\,dx\,dy\Bigg)^{\frac{1}{\theta}} \\
&\leqslant \lim_{\theta \to \infty}\liminf_{\jmath\to \infty} \Bigg(\int_{\Omega^{2}} \bigg[\dfrac{|u_{\wp_{\jmath}(\cdot)}(x)-u_{\wp_{\jmath}(\cdot)}(y)|}{|x-y|^{\beta}}\bigg]^{\theta}\,dx\,dy\Bigg)^{\frac{1}{\theta}}
\leqslant 1,
\end{split}
\end{equation*}
thus $u\in \mathbb{K}_{\infty}.$



\noindent Hence, since $\Phi_{\jmath} \geqslant 0$ we have 

\begin{equation*}
\Phi_{\infty}(u) = 0 \leqslant \liminf_{\jmath \to \infty} \Phi_{\jmath}(u_{\wp_{\jmath}(\cdot)}).
\end{equation*}

\noindent  Therefore, we achieved \eqref{13}. Therefore, $\Phi_{\jmath} \to \Phi_{\infty}$ on $L^{2}(\Omega)$ in the sense of Mosco as $\jmath \to \infty$.
\end{proof}


\noindent \textbf{Proof of Theorem \ref{t1}}. \begin{proof} Taking into account the Propositions
 \ref{p1} and \ref{p2}, the solutions $u_{\wp_{\jmath}(\cdot)}$ converge to a limit $\mathfrak{u}_{\infty}$ in $\mathbb{X}$ and it uniquely solves 
 
\begin{equation}\label{ev} 
 \frac{d \mathfrak{u}_{\infty}}{dt}(t) + \partial\Phi_{\infty}(\mathfrak{u}_{\infty}(t)) \ni f(t),\;\; 0 < t < T,\;\; \mathfrak{u}_{\infty}(\cdot,0)=\mathfrak{u}_{\infty}^{0}.
\end{equation}

 \noindent From the definition of subdifferential, the evolution equation \eqref{ev} can be rewritten as a variational inequality:

\begin{equation*}
\left( f(t) - \dfrac{d \mathfrak{u}_{\infty}}{dt}(t), v-\mathfrak{u}_{\infty}(t) \right)_{\mathcal{H}} \leqslant \Phi_{\infty}(v) - \Phi_{\infty}(\mathfrak{u}_{\infty}(t)) = 0 \;\; \mbox{for all} \;\; v \in \mathcal{D}(\Phi_{\infty}) = \mathbb{K}_{\infty}
\end{equation*}
and $\mathfrak{u}_{\infty}(t) \in \mathbb{K}_{\infty}$ for a.a. $t \in (0,T).$
\end{proof}
\subsection{ Proof of Theorem \ref{cs22} (Partial diffusion limit problem)}\label{PDL}\quad

\noindent Now, we discuss the case when $\wp_{\jmath}(\cdot,\cdot)\to \infty$ in a subset of $\Omega^{2}. $ More precisely, we study the following problem when \eqref{sb1} holds. In this situation, the limit problem is described as a mixture of two problems, a nonlinear diffusion equation involving the $\wp(\cdot)$-Laplacian in $(\Omega^{2}) \setminus \overline{\mathcal{O}^{2}}$ and evolutionary quasivariational inequality over $\mathcal{O}^{2}$. It should be noted that the set of quasivariational inequality constraints depends on an unknown function
(e.g. the set $\mathbb{K}_{\infty}$ is independent of $u$ as in the previous section).

For each function $\xi : \Omega \to \mathbb{R}$, we use the same letter $\omega$ for the restriction of $\omega$ onto a subset of $\Omega$ without confusion.

\begin{remark} With the aforementioned, we have the following observations.

\begin{itemize}
\item[\textbf{a)}] Roughly speaking, the constraint set of the evolutionary quasivariational inequality requires all test function $z$ in \eqref{sb7} to coincide with $u(\cdot,t)$ on the boundary $\partial D$ at each times $t$.\\
\item[\textbf{b)}] Since  $\wp(\cdot,\cdot) < \infty$ in $(\Omega^{2}) \setminus (\overline{\mathcal{O}^{2}})$ and $\wp^{-}_{\jmath} \to \infty$, the exponents $\wp_{\jmath}(\cdot,\cdot)$ must be discontinuous on $\partial U$ for $\jmath \in \mathbb{N}$ large enough. Thus we need work in the framework of discontinuous exponents (see Section 5 of \cite[Proposition 4]{elardjh}).
\end{itemize}
\end{remark}

 In order to prove the Theorem \ref{cs22}, we first show the Mosco Convergence of $\Phi_{\jmath}$.

\begin{proposition}\label{p3} 
Assume that \eqref{sb3} holds. Then $\Phi_{\jmath}$ converges in the sense of Mosco on $L^{2}(\Omega)$ to the functional $\Phi_{\mathcal{O}} : L^{2}(\Omega) \to [0,\infty]$ given by

\begin{equation*}
\Phi_{\mathcal{O}}(\xi) := 
\begin{cases}
\displaystyle\int_{\Omega^{2}\setminus \overline{\mathcal{O}^{2} }} \dfrac{1}{\kappa(x,y)} \dfrac{|\xi(x)-\xi(y)|^{\kappa(x,y)}}{|x-y|^{N+s\kappa(x,y)}}\; dx\,dy \;\; \mbox{if} \;\; \xi \in W^{s,\kappa^{-}}_{0}(\Omega),\, \xi \in \mathcal{W}^{s,\kappa(\cdot,\cdot)}_{0}(\Omega \setminus \overline{\mathcal{O}})  \mbox{ and} \;\; \\ \quad \quad \quad \quad \quad \quad \quad \quad \quad \quad \quad \quad\quad\quad\quad\quad\quad\quad\quad\quad\quad   \sup_{(x,y)\in \mathcal{O}^{2}\,\,x\neq y}\frac{\xi(x)-\xi(y)}{|x-y|^{s}} \leqslant 1, \\
\infty, \;\;\quad\quad \quad \quad \quad \quad\quad \quad \quad \quad \quad \quad \quad \quad \quad \quad \quad \quad \quad\mbox{otherwise} \;\;
\end{cases}
\end{equation*}
as $\jmath \to \infty.$
\end{proposition}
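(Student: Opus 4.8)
The plan is to verify the two conditions of Mosco convergence (Definition \ref{M}) for the sequence $(\Phi_{\jmath})_{\jmath\in\mathbb{N}}$ and the limit $\Phi_{\mathcal{O}}$ on $\mathcal{H}=L^{2}(\Omega)$; as a preliminary one checks that $\Phi_{\mathcal{O}}\in\Phi(\mathcal{H})$, which is routine: it vanishes at $0$ (proper), it is the sum on its finite domain of the convex modular $\xi\mapsto\int_{\Omega^{2}\setminus\overline{\mathcal{O}^{2}}}\tfrac{1}{\kappa(x,y)}\tfrac{|\xi(x)-\xi(y)|^{\kappa(x,y)}}{|x-y|^{N+s\kappa(x,y)}}\,dx\,dy=:I^{c}(\xi)$ and the indicator of the convex set $\{\sup_{\mathcal{O}^{2},\,x\neq y}|\mathcal{G}_{s}\xi|\leqslant1\}$ (convex), and it is lower semicontinuous by Fatou's lemma together with weak/strong closedness of the variable-exponent fractional Sobolev spaces involved. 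We use the decomposition $\Phi_{\jmath}(\xi)=I_{\jmath}^{\mathcal{O}}(\xi)+I^{c}(\xi)$ coming from \eqref{sb1}, where $I_{\jmath}^{\mathcal{O}}$ is the $\kappa_{\jmath}$-energy over $\mathcal{O}^{2}$ and $I^{c}$ is the $\jmath$-independent $\kappa(\cdot,\cdot)$-energy over $\Omega^{2}\setminus\overline{\mathcal{O}^{2}}$; both pieces are nonnegative.

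For condition \textbf{(i)} (recovery sequence), given $u\in\mathcal{D}(\Phi_{\mathcal{O}})$ I would take $u_{\jmath}\equiv u$. First one must check $u\in\mathcal{D}(\Phi_{\jmath})=\mathcal{W}^{s,\wp_{\jmath}(\cdot,\cdot)}_{0}(\Omega)$ for $\jmath$ large: on the complement this is exactly $u\in\mathcal{W}^{s,\kappa(\cdot,\cdot)}_{0}(\Omega\setminus\overline{\mathcal{O}})$ (plus $u\in W^{s,\kappa^{-}}_{0}(\Omega)$ to control the mixed pairs), while on $\mathcal{O}^{2}$ one uses that $|\mathcal{G}_{s}u|\leqslant1$ there and $\kappa_{\jmath}(x,y)\geqslant\kappa^{-}_{\jmath}\geqslant\kappa^{-}$ (for $\jmath$ large, by \eqref{sb3}), so that the $\kappa_{\jmath}$-modular on $\mathcal{O}^{2}$ is dominated by the $\kappa^{-}$-modular, hence finite; this step must be carried out in the framework of discontinuous exponents (see Section~5 of \cite[Proposition~4]{elardjh}). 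Then $\Phi_{\jmath}(u)-\Phi_{\mathcal{O}}(u)=I_{\jmath}^{\mathcal{O}}(u)=\tfrac{1}{\kappa_{\jmath}}\int_{\mathcal{O}^{2}}|\mathcal{G}_{s}u(x,y)|^{\kappa_{\jmath}(x,y)}\,d\mu\leqslant\tfrac{1}{\kappa^{-}_{\jmath}}\int_{\mathcal{O}^{2}}|\mathcal{G}_{s}u|^{\kappa^{-}}\,d\mu\to0$ as $\jmath\to\infty$, since the last integral is finite ($u\in W^{s,\kappa^{-}}_{0}(\Omega)$) and $\kappa^{-}_{\jmath}\to\infty$; so $\Phi_{\jmath}(u_{\jmath})\to\Phi_{\mathcal{O}}(u)$.

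For condition \textbf{(ii)} (the $\liminf$ inequality, which is the bulk of the work), let $u_{\jmath}\rightharpoonup u$ in $\mathcal{H}$; one may assume $\ell:=\liminf_{\jmath}\Phi_{\jmath}(u_{\jmath})<\infty$ and pass to a subsequence with $\Phi_{\jmath}(u_{\jmath})\to\ell$ and $\Phi_{\jmath}(u_{\jmath})\leqslant\mathfrak{C}$, whence $I_{\jmath}^{\mathcal{O}}(u_{\jmath}),I^{c}(u_{\jmath})\leqslant\mathfrak{C}$. On $\mathcal{O}^{2}$ the exponents $\kappa_{\jmath}$ diverge, so I would run exactly the interpolation/H\"older chain of Proposition \ref{p2} (i.e.\ of \cite{elardjh}) localized to $\mathcal{O}^{2}$, using \eqref{sb3}: from $I_{\jmath}^{\mathcal{O}}(u_{\jmath})\leqslant\mathfrak{C}$, H\"older in $L^{\kappa_{\jmath}(\cdot,\cdot)}(\mathcal{O}^{2})$ and Proposition \ref{lw0} give that $\mathcal{G}_{(s,\theta)}u_{\jmath}$ is bounded in $L^{\theta}(\mathcal{O}^{2})$, uniformly in $\jmath$, for each fixed $\theta<\infty$; diagonalizing in $\theta$ and identifying the weak limit via $u_{\jmath}\rightharpoonup u$ yields $\mathcal{G}_{(s,\theta)}u_{\jmath}\rightharpoonup\mathcal{G}_{(s,\theta)}u$ in $L^{\theta}(\mathcal{O}^{2})$, and then Fatou as $\theta\to\infty$ gives $u|_{\mathcal{O}}\in W^{s,\infty}(\mathcal{O})$ with $\sup_{\mathcal{O}^{2},\,x\neq y}|\mathcal{G}_{s}u|\leqslant1$, together with a quantitative $W^{s,\kappa^{-}}(\mathcal{O})$-bound. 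On the complement the exponent is frozen at $\kappa(\cdot,\cdot)$, so $I^{c}(u_{\jmath})\leqslant\mathfrak{C}$ bounds $u_{\jmath}$ in $\mathcal{W}^{s,\kappa(\cdot,\cdot)}_{0}(\Omega\setminus\overline{\mathcal{O}})$ (and controls the mixed-pair seminorm), reflexivity gives $u|_{\Omega\setminus\overline{\mathcal{O}}}\in\mathcal{W}^{s,\kappa(\cdot,\cdot)}_{0}(\Omega\setminus\overline{\mathcal{O}})$, and weak lower semicontinuity of the convex functional $I^{c}$ gives $I^{c}(u)\leqslant\liminf_{\jmath}I^{c}(u_{\jmath})$. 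Gluing the two regimes (using weak closedness of $W^{s,\kappa^{-}}_{0}(\Omega)$ and the embeddings of the variable-exponent pieces into the lowest-exponent constant space on the bounded pieces) yields $u\in W^{s,\kappa^{-}}_{0}(\Omega)$, hence $u\in\mathcal{D}(\Phi_{\mathcal{O}})$; and since $I_{\jmath}^{\mathcal{O}}(u_{\jmath})\geqslant0$ forces $\limsup_{\jmath}I^{c}(u_{\jmath})\leqslant\ell$, one concludes $\Phi_{\mathcal{O}}(u)=I^{c}(u)\leqslant\liminf_{\jmath}I^{c}(u_{\jmath})\leqslant\ell$.

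The main obstacle is the $\liminf$ step, precisely the simultaneous extraction: from the \emph{single} boundedness $\Phi_{\jmath}(u_{\jmath})\leqslant\mathfrak{C}$ one must produce a limit $u$ that belongs to two a priori unrelated spaces at once — the pointwise $s$-H\"older ball over $\mathcal{O}$ and the variable-exponent fractional Sobolev class over $\Omega\setminus\overline{\mathcal{O}}$ — and make these descriptions compatible across the interface $\partial\mathcal{O}$, where the exponents $\wp_{\jmath}$ are discontinuous for $\jmath$ large; this forces working throughout in the discontinuous-exponent framework of \cite{elardjh}, keeping careful track of the mixed pairs $(x,y)$ with $x\in\mathcal{O}$, $y\notin\overline{\mathcal{O}}$, which are not covered by either of the two localized arguments. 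The remaining ingredients — the H\"older chain on $\mathcal{O}^{2}$ (already available from Proposition \ref{p2}), the weak lower semicontinuity of the frozen-exponent convex modular $I^{c}$, and the one-line recovery estimate above — are then assembled, and the proposition follows.
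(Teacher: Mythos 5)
Your proof is correct and follows essentially the same route as the paper's: the same two-region decomposition $\Phi_{\jmath}=I_{\jmath}^{\mathcal{O}}+I^{c}$, the recovery sequence $u_{\jmath}\equiv u$ with the $\mathcal{O}^{2}$-piece vanishing because $\sup_{\mathcal{O}^{2}}|\mathcal{G}_{s}u|\leqslant 1$ and $\kappa^{-}_{\jmath}\to\infty$, and for the $\liminf$ inequality the same ingredients (the H\"older/interpolation chain of Proposition~\ref{p2} localized to $\mathcal{O}^{2}$ to get the $s$-H\"older constraint, boundedness of the frozen modular $I^{c}$ on $\Omega^{2}\setminus\overline{\mathcal{O}^{2}}$ giving weak convergence of the Gagliardo quotient, and weak lower semicontinuity of $I^{c}$). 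Your additional observation that the constraint $u\in W^{s,\kappa^{-}}_{0}(\Omega)$ is what glues the two regimes across the mixed pairs $(x,y)$ with exactly one coordinate in $\mathcal{O}$ is exactly the role the paper assigns to the embedding $\mathbb{X}\hookrightarrow W^{s,\kappa^{-}}_{0}(\Omega)$ together with the fractional Poincar\'e inequality; and your recovery bound $I^{\mathcal{O}}_{\jmath}(u)\leqslant (\kappa^{-}_{\jmath})^{-1}\int_{\mathcal{O}^{2}}|\mathcal{G}_{s}u|^{\kappa^{-}}\,d\mu$ is a mild (and arguably cleaner) variant of the paper's $|\mathcal{O}^{2}|/\kappa^{-}_{\jmath}$ bound, but it does not change the substance of the argument.
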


\begin{proof}
Let $u \in \mathcal{D}(\Phi_{U})$ be fixed and set $u_{\wp_{\jmath}(\cdot)} = u$ for all $\jmath \in \mathbb{N}$. Then $u \in L^{2}(\Omega)$ and 
\begin{equation*}
\dfrac{u(x)-u(y)}{|x-y|^{\frac{N}{\wp_{\jmath}(x,y)}+s}} \in L^{\wp_{\jmath}(\cdot,\cdot)}(\Omega^{2}).
\end{equation*}
Besides, since $\wp^{-}_{\jmath}=\kappa^{-}$ for any $\jmath \in \mathbb{N}$ large enough, we have $W^{s,\kappa^{-}}_{0}(\Omega) = W^{s,\wp^{-}_{\jmath}}_{0}(\Omega)$.

\noindent Hence $u \in \mathcal{W}^{s,\wp_{\jmath}(\cdot,\cdot)}_{0}(\Omega) = \mathcal{D}(\Phi_{\jmath})$ for large enough $\jmath \in \mathbb{N}$. Now, note that from \eqref{sb3}, we have
\begin{equation*}
\begin{split}
\limsup_{\jmath\to \infty}\int_{\mathcal{O}^{2}} \dfrac{1}{\kappa_{\jmath}(x,y)} \dfrac{|u(x)-u(y)|^{\kappa_{\jmath}(x,y)}}{|x-y|^{N+s\kappa_{\jmath}(x,y)}}\, dx\,dy&=  \limsup_{\jmath\to \infty}\int_{\mathcal{O}^{2}} \dfrac{1}{\kappa_{\jmath}(x,y)} \bigg(\dfrac{|u(x)-u(y)|}{|x-y|^{\gamma}}\bigg)^{\kappa_{\jmath}(x,y)} dxdy \\
&\leqslant \lim_{\jmath\to \infty}\Bigg(\int_{\mathcal{O}^{2}} \dfrac{1}{\kappa_{\jmath}(x,y)}\bigg[\sup_{(x,y)\in \Omega^{2},\,\,x\neq y}\dfrac{|u(x)-u(y)|}{|x-y|^{\gamma}}\bigg]^{\kappa_{\jmath}(x,y)}\,dx\,dy    \Bigg)\\
&\leqslant \lim_{\jmath\to \infty}\dfrac{|\mathcal{O}^{2}|}{\kappa^{-}_{\jmath}}=0,
\end{split}
\end{equation*}
with $s=\gamma-\frac{N}{\kappa_{j}(\cdot, \cdot)}.$

Hence, we have

\begin{equation*}
\begin{split}
\Phi_{\jmath}(u_{\wp_{\jmath}(\cdot)}) &= \int_{\mathcal{O}^{2}}\dfrac{1}{\kappa_{\jmath}(x,y)} \dfrac{|u(x)-u(y)|^{\kappa_{\jmath}(x,y)}}{|x-y|^{N+s\kappa_{\jmath}(x,y)}} dxdy 
+ \int_{(\Omega^{2}) \setminus (\overline{\mathcal{O}^{2}})} \dfrac{1}{\kappa(x,y)} \dfrac{|u(x)-u(y)|^{\kappa(x,y)}}{|x-y|^{N+s \kappa(x,y)}} dxdy \\
& \longrightarrow \int_{(\Omega^{2}) \setminus (\overline{\mathcal{O}^{2}}) } \dfrac{1}{\kappa(x,y)} \dfrac{|u(x)-u(y)|^{\kappa(x,y)}}{|x-y|^{N+s\kappa(x,y)}}\,dx\,dy = \Phi_{\mathcal{O}}(u),
\end{split}
\end{equation*}
Thus, we have proved \textbf{(i)} of Definition \ref{M} for $\Phi_{\jmath}$.

 Now, we will prove \textbf{(ii)} of Definition \ref{M}. Let the sequence $(u_{\wp_{\jmath}(\cdot)})_{\jmath\in \mathbb{N}} \subset \mathcal{D}(\Phi_{\jmath})$ be such that $u_{\wp_{\jmath}(\cdot)} \rightharpoonup u$ in $L^{2}(\Omega)$. It is enough to prove that

\begin{equation*}
\liminf_{\jmath \to \infty} \Phi_{\jmath}(u_{\wp_{\jmath}(\cdot)}) < \infty.
\end{equation*}

\noindent Then, up to a subsequence, we have $\Phi_{\jmath}(u_{\wp_{\jmath}(\cdot)}) \leqslant C,$ for some $C>0$, which gives rise to two observations.

 The first, 

\begin{equation}\label{6.11}
\int_{\Omega^{2} \setminus \overline{\mathcal{O}^{2}}} \dfrac{1}{\kappa(x,y)} \dfrac{|u_{\wp_{\jmath}(\cdot)}(x)-u_{\wp_{\jmath}(\cdot)}(y)|^{\kappa(x,y)}}{|x-y|^{N+s\kappa(x,y)}} dxdy \leqslant C,
\end{equation}
which implies, up to a subsequence,

\begin{equation}\label{weakq}
 \mathcal{G}_{(s,\kappa(\cdot,\cdot))}u_{\wp_{\jmath}(\cdot)} \rightharpoonup \mathcal{G}_{(s,\kappa(\cdot,\cdot))}u \;\; \mbox{in} \;\; L^{\kappa(\cdot,\cdot)}(\Omega^{2} \setminus \overline{\mathcal{O}^{2}}).
\end{equation}

\noindent Hence $ u\in W^{s,\kappa(\cdot,\cdot)}(\Omega \setminus \overline{\mathcal{O}})$ (see \cite[Lemma 2.4.17]{diening}).

 The second one is the following:

\begin{equation}\label{6.12}
\int_{\mathcal{O}^{2}} \dfrac{1}{\kappa_{\jmath}(x,y)} \dfrac{|u_{\wp_{\jmath}(\cdot)}(x)-u_{\wp_{\jmath}(\cdot)}(y)|^{\kappa_{\jmath}(x,y)}}{|x-y|^{N+s\kappa_{\jmath}(x,y)}} dx\,dy \leqslant C
\end{equation}
for some $C>0.$

\noindent Arguing as in the proof of Proposition \ref{p2}, since $\kappa^{-}_{\jmath}-1 > \kappa^{-}$ for  large enough $\jmath \in \mathbb{N}$, we see that

\begin{equation}\label{6.13}
\| \mathcal{G}_{(s,\kappa^{-})}u_{\wp_{\jmath}(\cdot)} \|_{L^{\kappa^{-}}(\mathcal{O}^{2})} \leqslant C.\end{equation}
 Thus, from  \eqref{6.11}, \eqref{6.13}, and once that embedding $\mathbb{X}\hookrightarrow W_{0}^{s,\kappa^{-}}(\Omega)$ is continuous, we see that $( \mathcal{G}_{(s,\kappa^{-})}u_{\wp_{\jmath}(\cdot)})_{\jmath \in \mathbb{N}}$ is bounded in $L^{\kappa^{-}}(\Omega^{2})$. From Poincaré inequality for fractional Sobolev spaces,  the sequence $(u_{\wp_{\jmath}(\cdot)})_{\jmath \in \mathbb{N}}$ is bounded in $W^{s,\kappa^{-}}_{0}(\Omega)$, and therefore, up to a subsequence, $u_{\wp_{\jmath}(\cdot)} \rightharpoonup u$ in $W^{s,\kappa^{-}}_{0}(\Omega)$ and $u \in W^{s,\wp^{-}}_{0}(\Omega)$.

\noindent Besides, using \eqref{6.12} and arguing as in Section \ref{limit}, we have

\begin{equation*}
\sup_{(x,y)\in \mathcal{O}^{2}\,\,x\neq y} \frac{|u(x)-u(y)|}{|x-y|^{s}}\leqslant 1.
\end{equation*}

\noindent Thus, we conclude that $u \in \mathcal{D}(\Phi_{\mathcal{O}})$.

 Since the functional

\begin{equation*}
u \longmapsto \int_{(\Omega^{2}) \setminus \overline{\mathcal{O}^{2}}}\dfrac{1}{\kappa(x,y)} \dfrac{|u(x)-u(y)|^{ \kappa(x,y)}}{|x-y|^{N+s\kappa(x,y)}} dx\,dy,
\end{equation*}
is weakly lower semicontinuous in $L^{2}(\Omega)$, it follows that

\begin{equation*}
\begin{split}
\Phi_{\mathcal{O}}(u) & = \int_{(\Omega^{2}) \setminus \overline{\mathcal{O}^{2}}}\dfrac{1}{\kappa(x,y)} \dfrac{|u(x)-u(y)|^{\kappa(x,y)}}{|x-y|^{N+s\kappa(x,y)}} dx\,dy \\
& \leqslant \liminf_{\jmath \to \infty}\int_{(\Omega^{2}) \setminus \overline{\mathcal{O}^{2}}}\dfrac{1}{\kappa(x,y)} \dfrac{|u_{\wp_{\jmath}(\cdot)}(x)-u_{\wp_{\jmath}(\cdot)}(y)|^{\kappa(x,y)}}{|x-y|^{N+s\kappa(x,y)}} dx\,dy  \leqslant \liminf_{\jmath \to \infty} \Phi_{\jmath}(u_{\wp_{\jmath}(\cdot)}).
\end{split}
\end{equation*}

\noindent Therefore, we have proved \textbf{(ii)} of Definition \ref{M}.
\end{proof}

 We are now ready to prove Theorem \ref{cs22}.

\noindent \textbf{Proof of Theorem \ref{cs22}}.\begin{proof} Arguing as in the proof of Theorem \ref{t1}, is not difficult to prove the convergence of $u_{\wp_{\jmath}(\cdot)}$ and note that the limit $\mathfrak{u}_{\infty}$ uniquely solves

\begin{equation}\label{6.14}
\dfrac{d\mathfrak{u}_{\infty}}{dt}(t) + \partial\Phi_{\mathcal{O}}(\mathfrak{u}_{\infty}(t)) \ni f(t) \;\; \mbox{in} \;\; \mathcal{H} = L^{2}(\Omega), \;\; \mathfrak{u}_{\infty}(0)=\mathfrak{u}_{\infty}^{0}.
\end{equation}
\noindent Applying Proposition \ref{p1} with $\varphi_{\jmath}=\Phi_{\jmath}$ and $\varphi=\Phi_{\mathcal{O}}$. So, the main objective of the is to get a representation of \eqref{6.14}. First let us show that

\begin{equation}\label{6.144}
\varsigma = (-\Delta)^{s}_{\kappa(\cdot)}\xi \;\; \mbox{in} \;\; \mathscr{D}'(\Omega \setminus \overline{\mathcal{O}}) \;\; \mbox{if} \;\; \varsigma \in \partial\Phi_{\mathcal{O}}(\xi).
\end{equation}

 From definition of subdifferentials, we have

\begin{equation}\label{6.15}
\Phi_{\mathcal{O}}(v) - \Phi_{\mathcal{O}}(\xi) \geqslant \int_{\Omega} \varsigma(x) (v(x)-\xi(x))\; dx \;\; \mbox{for all} \;\; v \in \mathcal{D}(\Phi_{\mathcal{O}}).
\end{equation}

\noindent In particular, taking

\begin{equation*}
v(x) =
\begin{cases}
\xi(x) \;\; \mbox{in} \;\; \mathcal{O}, \\
\xi(x) + \mathfrak{h} \Theta(x) \;\; \mbox{in} \;\; \Omega \setminus \overline{\mathcal{O}},
\end{cases}
\end{equation*}
\noindent for $\mathfrak{h} \in \mathbb{R}$ arbitrary  and $\Theta \in  \mathscr{C}^{\infty}_{0}(\Omega \setminus \overline{\mathcal{O}})$. Then $v \in \mathcal{D}(\Phi_{\mathcal{O}})$ and we observe that
\small{
\begin{equation*}
\begin{split}
& \displaystyle\int_{(\Omega^{2}) \setminus \overline{\mathcal{O}^{2}}} \dfrac{1}{\kappa(x,y)} \bigg| \dfrac{(\xi+\mathfrak{h}\Theta)(x) - (\xi+\mathfrak{h}\Theta)(y)}{|x-y|^{N+s\kappa(x,y)}} \bigg|^{\kappa(x,y)}\; dx\,dy \\
& \;\;\; - \int_{(\Omega^{2}) \setminus \overline{\mathcal{O}^{2}}} \dfrac{1}{\kappa(x,y)} \dfrac{|\xi(x)-\xi(y)|^{\kappa(x,y)}}{|x-y|^{N+s\kappa(x,y)}}\; dx\,dy \geqslant \int_{\Omega \setminus \overline{\mathcal{O}}} \varsigma(x)\Theta(x)\; dx.
\end{split}
\end{equation*} }

\noindent Hence,
\small{
\begin{equation*}
 \int_{(\Omega^{2}) \setminus \overline{\mathcal{O}^{2}}} \dfrac{|\xi(x)-\xi(y)|^{\kappa(x,y)-2} (\xi(x)-\xi(y)) (\Theta(x)-\Theta(y))}{|x-y|^{N+s\kappa(x,y)}}\; dxdy = \int_{\Omega \setminus \overline{\mathcal{O}}} \varsigma(x)\Theta(x)\; dx,
\end{equation*}}
for all $\Theta \in \mathscr{C}^{\infty}_{0}(\Omega \setminus \overline{\mathcal{O}})$, and therefore, $\varsigma = (-\Delta)^{s}_{\kappa(\cdot)}\xi$ in $\mathscr{D}'(\Omega \setminus \overline{\mathcal{O}})$.

 We claimed that

\begin{equation}\label{6.16}
\int_{\mathcal{O}} \varsigma(x)(  \mathbf{z}(x)-\xi(x))\; dx \leqslant 0 \;\; \mbox{for all} \;\; \mathbf{z} \in \mathbb{K}_{\infty,\mathcal{O}}(\xi) \;\; \mbox{if} \;\; \varsigma \in \partial \Phi_{\mathcal{O}}(\xi).
\end{equation}

\noindent Indeed, let $\mathbf{z} \in \mathbb{K}_{\infty,\mathcal{O}}(\xi)$ and let

\begin{equation*}
v(x) =
\begin{cases}
\mathbf{z}(x) \;\; \mbox{in} \;\; \mathcal{O}, \\
\xi(x) \;\; \mbox{in} \;\; \Omega \setminus \overline{\mathcal{O}},
\end{cases}
\end{equation*}
\noindent substituting in \eqref{6.15}.

 Note that $v \in \mathcal{D}(\Phi_{\mathcal{O}})$, once that $v \in \mathcal{W}^{s,\kappa(\cdot,\cdot)}_{0}(\Omega \setminus \overline{\mathcal{O}})$ and $\sup_{(x,y)\in \mathcal{O}^{2},\,x\neq y}\frac{|v(x)-v(y)|}{|x-y|^{s}} \leqslant 1,$ the zero extension $\widetilde{\mathbf{z}-\xi}$ of  $\mathbf{z}-\xi\in W^{1,\kappa^{-}}_{0}(\mathcal{O})$ in $\Omega$ belongs to $W^{s,\kappa^{-}}_{0}(\Omega)$ and thus $v = \widetilde{\mathbf{z}-\xi}+\xi\in W^{s,\kappa^{-}}_{0}(\Omega)$. Then, we obtain \eqref{6.16}. Therefore, from \eqref{6.144} and \eqref{6.16}, the problem \eqref{6.14} is rewritten as \eqref{sb4} - \eqref{sb7}.

Finally let us prove that if \eqref{sb8} is valid, then we achieved \eqref{sb9}. Once that $\Phi_{\jmath}(u_{\wp_{\jmath}(\cdot)}(\cdot)) \to \Phi_{\mathcal{O}}(\mathfrak{u}_{\infty}(\cdot))$ uniformly on $[0,T]$ from Proposition \ref{p1}, we have
\small{
\begin{equation*}
\begin{split}
\int_{\Omega^{2} \setminus \overline{\mathcal{O}^{2}}}&\dfrac{1}{\kappa(x,y)} \dfrac{|\mathfrak{u}_{\infty}(x,t)-\mathfrak{u}_{\infty}(y,t)|^{\kappa(x,y)}}{|x-y|^{N+s\kappa(x,y)}}\; dx\,dy  = \Phi_{\mathcal{O}}(\mathfrak{u}_{\infty}(t))  = \lim_{\jmath \to \infty} \Phi_{\jmath}(u_{\wp_{\jmath}(\cdot)}(t)) \\
& \geqslant \limsup_{\jmath \to \infty}\int_{\Omega^{2} \setminus \overline{\mathcal{O}^{2}}}\dfrac{1}{\kappa(x,y)} \dfrac{|u_{\wp_{\jmath}(\cdot)}(x,t)-u_{\wp_{\jmath}(\cdot)}(y,t)|^{\kappa(x,y)}}{|x-y|^{N+s\kappa(x,y)}}\; dx\,dy.
\end{split}
\end{equation*}
}

   Remembering that $\mathcal{G}_{(s,\kappa(\cdot,\cdot))}u_{\wp_{\jmath}(\cdot)} \rightharpoonup \mathcal{G}_{(s,\kappa(\cdot,\cdot))}\mathfrak{u}_{\infty}$ in $L^{\kappa(\cdot,\cdot)}(\Omega^{2} \setminus \overline{\mathcal{O}^{2}})$ (see \eqref{weakq}) and note that

\begin{equation*}
\begin{split}
\liminf_{\jmath \to \infty} &\int_{\Omega^{2} \setminus \overline{\mathcal{O}^{2}}}\dfrac{1}{\kappa(x,y)} \dfrac{|u_{\wp_{\jmath}(\cdot)}(x,t)-u_{\wp_{\jmath}(\cdot)}(y,t)|^{\kappa(x,y)}}{|x-y|^{N+s\kappa(x,y)}}\; dx\,dy\\
 &\geqslant  \int_{\Omega^{2} \setminus \overline{\mathcal{O}^{2}}} \dfrac{1}{\kappa(x,y)} \dfrac{|\mathfrak{u}_{\infty}(x,t)-\mathfrak{u}_{\infty}(y,t)|^{\kappa(x,y)}}{|x-y|^{N+s\kappa(x,y)}}\; dx\,dy.
\end{split}
\end{equation*}
Then, arguing as in the proof of  \cite[Theorem 1]{elardjh}, we achieved that  $u_{\wp_{\jmath}(\cdot)}(t) \to \mathfrak{u}_{\infty}(t)$ strongly in $\mathcal{W}^{s,\kappa(\cdot,\cdot)}_{0}(\Omega \setminus \overline{\mathcal{O}})$ for all $t \in [0,T]$. Since $(\Phi_{\jmath}(u_{\wp_{\jmath}(\cdot)}(t)))_{\jmath\in \mathbb{N}}$ is uniformly bounded on $[0,T]$ for all  $\jmath \in \mathbb{N}$ and taking into account Lebesgue Dominated Convergence Theorem, we have $u_{\wp_{\jmath}(\cdot)}\to \mathfrak{u}_{\infty}$ strongly in $L^{\vartheta}(0,T;\mathcal{W}^{s,\kappa(\cdot,\cdot)}_{0}(\Omega \setminus \overline{\mathcal{O}}))$ for any $\vartheta \in [1,\infty )$.
\end{proof}
\section*{Acknowledgments}\quad
\noindent \noindent This study was financed by the Coordena\c{c}\~ao de Aperfei\c{c}oamento de Pessoal de N\'ivel Superior Brasil (CAPES) Finance Code 001.

\end{document}